\let\c@author\relax
\definecolor{labelkey}{rgb}{0.0, 0.8, 0.3}
\numberwithin{equation}{section}
\declaretheorem[name=Corollary]{cor}
\declaretheorem[name=Lemma]{lem}
\declaretheorem[name=Proposition]{prop}
\declaretheorem[name=Remark, style=remark]{rmk}
\declaretheorem[name=Theorem]{thm}
\newtheorem*{thm*}{Theorem}
\definecolor{pink}{cmyk}{0, 1, 0, 0} 
\newcommand\X{X^\star}
\newcommand\dotX{\dot X^\star}
\newcommand\ddotX{\ddot X^\star}
\newcommand\dddotX{\dddot X^\star}
\newcommand\Y{X}
\newcommand{\mus}{\mu^{\star}}
\newcommand{\allcaps}[1]{#1}
\newcommand{\placeqed}{\nobreak\enspace$\square$}
\newcommand{\placeqedmm}{\nobreak\enspace\square}
\begin{document}

\begin{frontmatter}

	\title{Fast and Smooth Interpolation on Wasserstein Space}
	\runtitle{Fast and Smooth Interpolation on Wasserstein Space}
	\author{Sinho Chewi \hfill schewi@mit.edu \\
	Julien Clancy \hfill julienc@mit.edu \\
	Thibaut Le Gouic \hfill tlegouic@mit.edu \\
	Philippe Rigollet \hfill rigollet@mit.edu \\
	George Stepaniants \hfill gstepan@mit.edu \\
	Austin J.\ Stromme \hfill astromme@mit.edu}



	\address{{Department of Mathematics} \\
		{Massachusetts Institute of Technology}\\
		{77 Massachusetts Avenue,}\\
		{Cambridge, MA 02139-4307, USA}
	}
	


\runauthor{Clancy et al.}

\begin{abstract}
We propose a new method for smoothly interpolating probability measures using the geometry of optimal transport. To that end, we reduce this problem to the classical Euclidean setting, allowing us to directly leverage the extensive toolbox of spline interpolation. Unlike previous approaches to measure-valued splines, our interpolated curves (i) have a clear interpretation as governing particle flows, which is natural for applications, and (ii) come with the first approximation guarantees on Wasserstein space. Finally, we demonstrate the broad applicability of our interpolation methodology by fitting surfaces of measures using thin-plate splines.
\end{abstract}

\end{frontmatter}

\section{\allcaps{Introduction}}



Smooth interpolation is a fundamental tool in numerical analysis that plays a central role in data science. While this task is traditionally studied on the flat Euclidean space $\R^d$, recent applications have called for interpolation of points living on curved spaces such as smooth manifolds \parencite{noakesCubicSplinesCurved1989} and, more recently, the Wasserstein space of probability measures. An important application arises in single-cell genomic data analysis where the measure $\mus_t$ represents a population of cells at time $t$ of a biological process such as differentiation, and the cells of an organism specialize over the course of early development. In this context, two main questions arise: 
1) to infer the profile of the population at unobserved times; and more importantly 2) to reconstruct the trajectories of individual cells in gene space, that is: given a cell at time $t$, determine its (likely) history and fate. \textcite{HCA} argue that cellular trajectory reconstruction is crucial to unlocking the promises of single-cell genomics. A breakthrough in this direction was recently achieved using optimal transport  by~\textcite{SCHIEBINGER2019928}, but their work does not produce \emph{smooth} trajectories. To illustrate, we display in Figure~\ref{fig:sawtooth} a comparison of their approach with the smooth interpolation methodology developed in the present work. Although we are mainly motivated by cell trajectory reconstruction, we are confident that the flexibility and efficiency of the method will allow it to find applications beyond this scope.

\begin{figure}
\hspace{2em}
\includegraphics[width=0.47\textwidth]{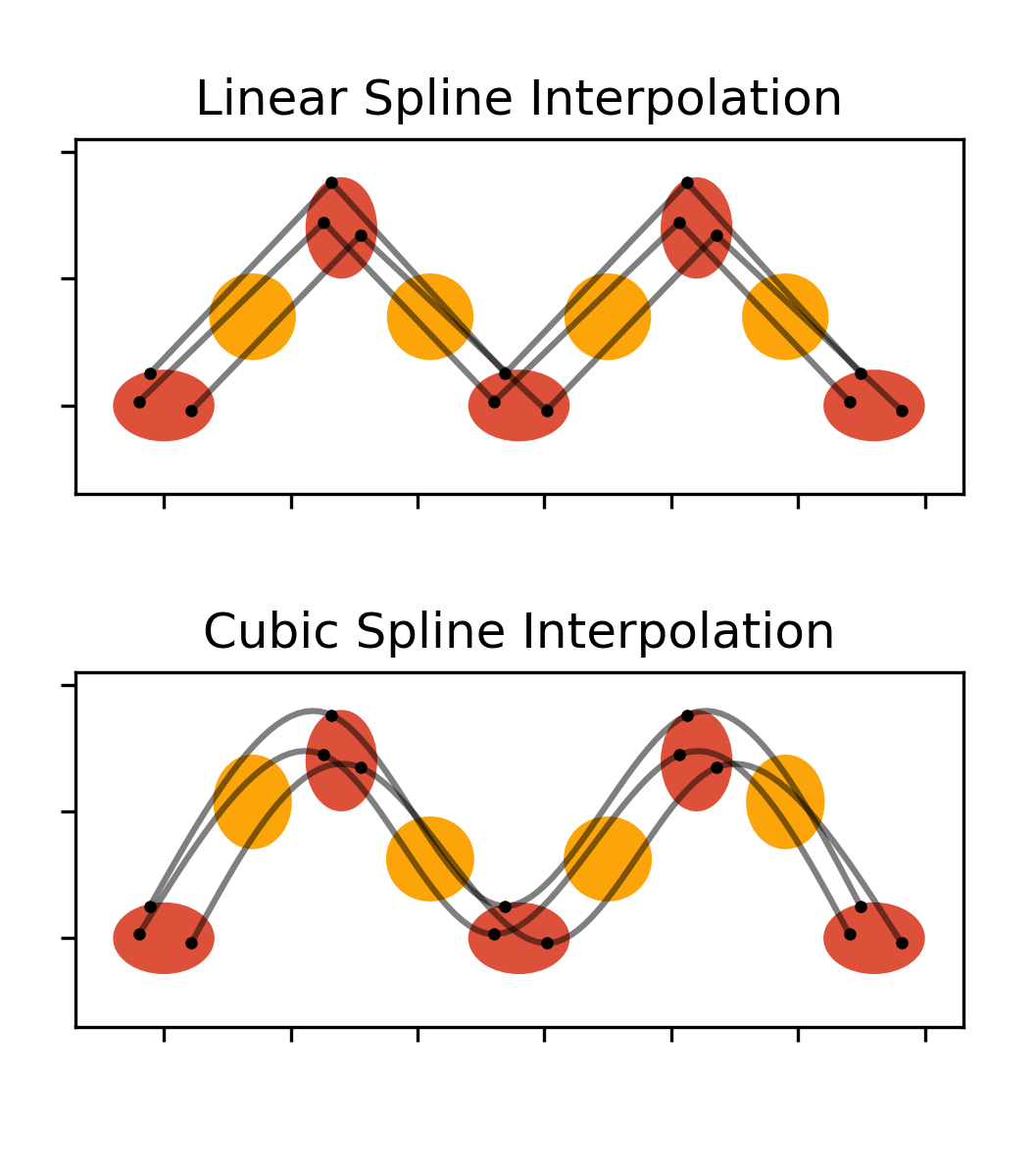}
\vspace{-0.5em}
\caption{\small Piecewise linear and cubic spline interpolation of four Gaussians. The interpolation knots are shown in red and the interpolated Gaussians are shown in orange. See Appendix~\ref{appendix:sawtooth}.}
\label{fig:sawtooth}
\end{figure}
While the first question above is a natural extension of interpolation to the space of probability measures, the second question calls for a specific type of interpolation: one that also reconstructs the (smooth) trajectories of individual particles. Mathematically, the trajectory of a particle (e.g., a cell) is a stochastic process ${(\X_t)}_{t\in [0,1]}$ with smooth sample paths. This leads us to the following problem of trajectory-aware interpolation over the space of probability measures.

\medskip{}
\noindent {\bf The problem.} Let ${(\X_t)}_{t\in [0,1]}$ be a stochastic process on $\R^d$ with $\mc C^2$ sample paths and marginal laws $\X_t\sim \mus_t, t \in [0,1]$. Given $\mus_{t_0},\mus_{t_1},\dotsc,\mus_{t_N}$ at times $0 = t_0 < t_1 < \cdots < t_N = 1$, the task is to construct a stochastic process ${(\Y_t)}_{t\in [0,1]}$ such that $\Y_t$ has  $\mc C^2$ sample paths and the distribution $\mu_t$ of $\Y_t$ \emph{interpolates} the given measures, meaning $\mu_{t_i} = \mus_{t_i}$ for $i=0,1,\dotsc,N$.\\

Throughout, we assume all given measures to be absolutely continuous with finite second moment, and (as advocated in \textcite{SCHIEBINGER2019928}) we equip this space with the $2$-Wasserstein metric $W_2$ and seek an interpolation that reflects this geometry.


\medskip{}
\noindent {\bf Prior work.} This work is at the intersection of interpolation and optimal transport. On the one hand, interpolation in $\R^d$ is very well-developed, with fast and accurate methods ranging from interpolating polynomials and splines to more exotic non-parametric approaches~\parencite{wahbas1990plines}, and with renewed interest due to recent theoretical results~\parencite{Belkin15849}. Our methodology can accommodate all of these options, but we focus on cubic spline interpolations due to their simplicity, theoretical guarantees, and their \emph{curvature-minimizing} property (see Section~\ref{scn:splines}). On the other hand, optimal transport has become a useful tool in the analysis of observations represented in the form of probability measures. Recent computational advances ~\parencite{Cut13,AltWeeRig17,peyre2019computationalot} have led to the development of many methods in statistical optimal transport, from barycenters to geodesic PCA. The present work extends this toolbox by developing a method for smooth interpolation over the Wasserstein space of probability measures.

Splines in Wasserstein space were  considered concurrently and independently  by~\textcite{chenMeasurevaluedSplineCurves2018} and~\textcite{benamouSecondOrderModels2018}. Both papers converge to the same notion of splines, which we call P-splines. Though motivated by particle dynamics, P-splines solve an optimal transport problem that is not guaranteed to have a {\it Monge} solution. Instead, it outputs stochastic processes ${(\Y_t)}_{t\in [0,1]}$ for which $\Y_t$ is not a deterministic function of $\Y_0$. In other words, given an initial position, there is no unique particle trajectory emanating from this position but rather a superposition of such trajectories; see Figure~\ref{fig:pspline_vs_tspline} and the discussion in Section~\ref{scn:splines}. We show that this is not an isolated phenomenon arising from pathological data but applies even to the canonical example of one-dimensional Gaussian distributions. This limitation, together with a relatively heavy computational cost, severely hinders the deployment of P-splines in applications, ours included, especially where interpretation is a priority.

We review these prior works and their motivations in Section~\ref{scn:splines}. We remark however that the algorithm we ultimately propose requires considerably less technical machinery to describe compared to these prior works, and we recommend that readers who simply wish to understand our method skip directly to Section~\ref{scn:transport_spline_alg}.

\begin{figure}
    \centering
    \includegraphics[width=0.475\textwidth]{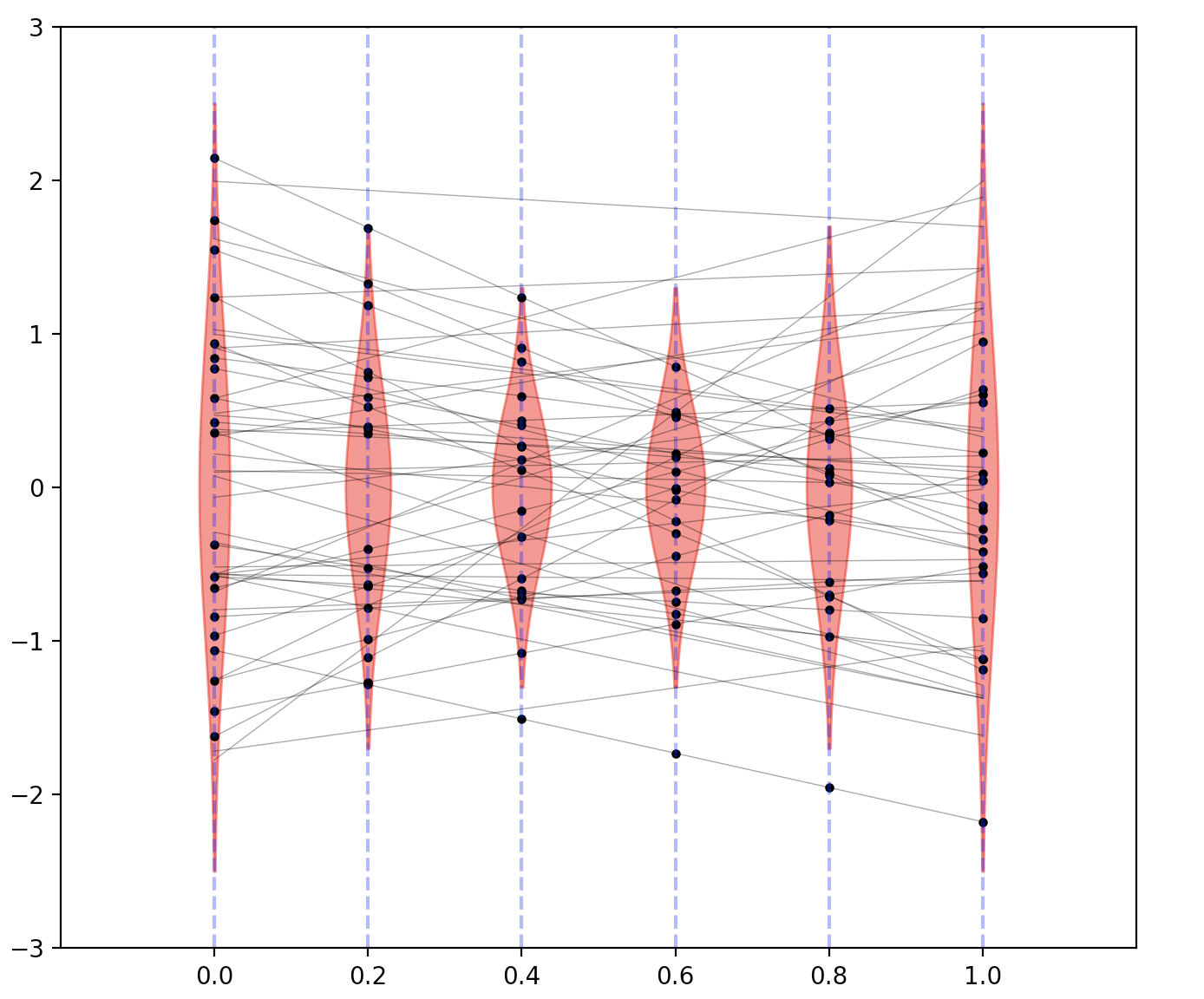}
    \includegraphics[width=0.475\textwidth]{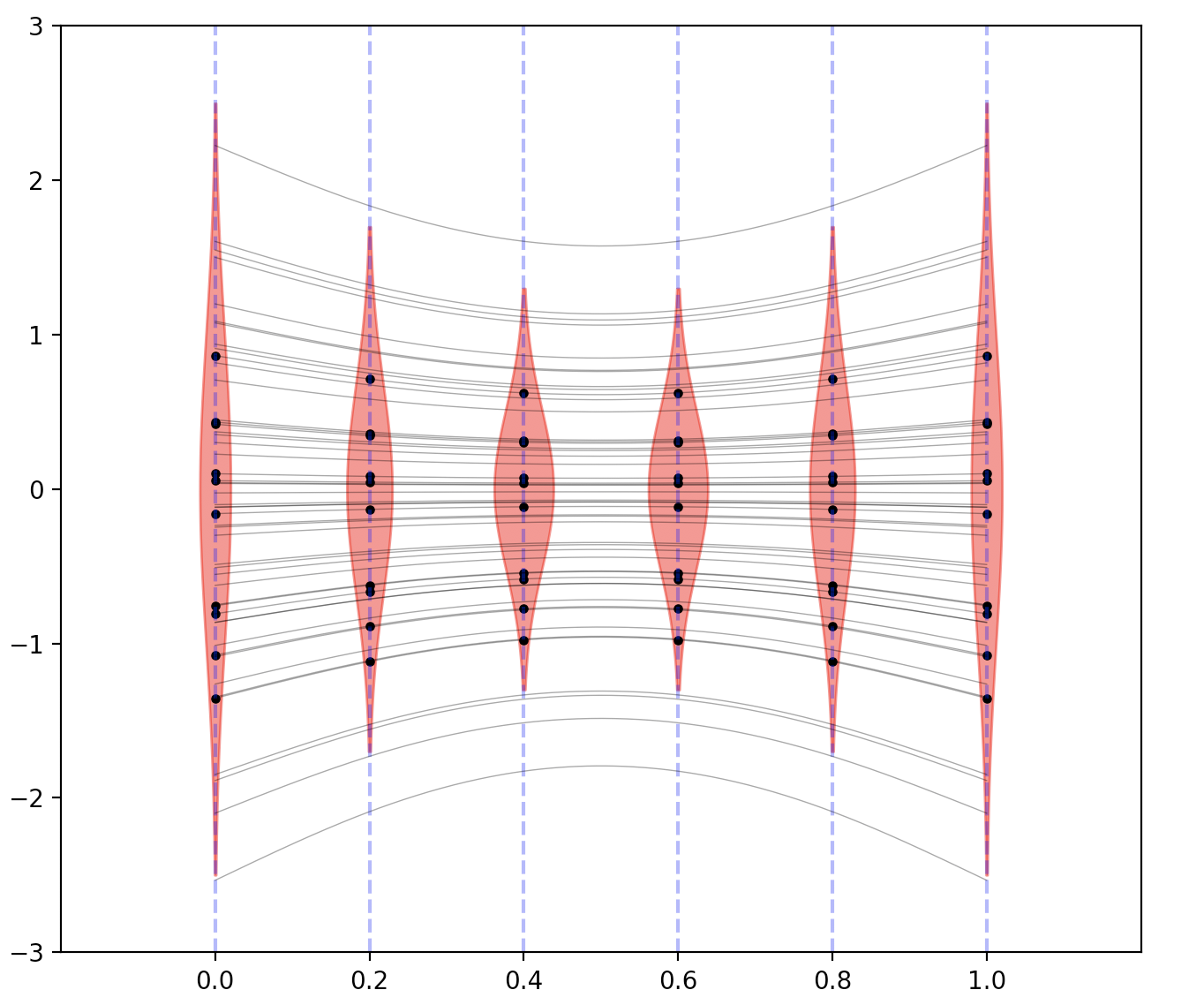}
    \vspace{0.2em}
    \caption{\small A comparison of 50 trajectories sampled from P-splines and transport splines for the Gaussian interpolation problem in Proposition~\ref{prop:non_Monge_p_spline} (see Appendix~\ref{appendix:non_Monge_p_spline} for a detailed discussion). The first figure shows trajectories drawn from the P-spline interpolation, while the second shows trajectories from our method.}
    \label{fig:pspline_vs_tspline}
\end{figure}

\medskip{}
\noindent{\bf Our contributions.} To overcome the aforementioned issues, we propose in Section~\ref{scn:transport_spline_alg} a new method for constructing measure-valued splines. Our method outputs Monge solutions, and moreover enjoys significant computational advantages: it only requires $N$ evaluations of Monge maps and standard Euclidean cubic spline fitting to output trajectories. In the case where all of the measures are Gaussian, our approach is more interpretable and scalable than the SDP-based approach of~\textcite{chenMeasurevaluedSplineCurves2018}.

In particular, for Gaussian measures, our method only requires one $d \times d$ matrix inversion and $O(1)$ multiplications per sample point $\mus_{t_i}$.
In comparison, the method of~\textcite{chenMeasurevaluedSplineCurves2018} solves an SDP with $N$ coupled $4d \times 4d$ matrix variables. In the general case we still only need to perform $N$ pairwise OT computations, which can be done efficiently~\parencite{AltWeeRig17}, while the competing algorithms in~\textcite{benamouSecondOrderModels2018} require time exponential in either $N$ or $d$.

 Our new method comes with a theoretical study of its approximation error. In the Gaussian setting, we introduce new techniques for studying quantitative approximation of transport maps and vector fields. In turn, it yields an approximation guarantee analogous to the classical setting (Theorem~\ref{thm:approx_thm}), but adapted to the geometry of the space. This paves the way for a principled theory of approximation on Wasserstein space that mirrors classical Euclidean results. In a forthcoming work, we build upon these ideas to develop higher-order approximation schemes.

A key feature of our approach is its flexibility, which allows us to easily extend our method to fitting thin-plate splines for measures indexed by high-dimensional covariates. We study the case of two-dimensional spatial covariates in Section~\ref{scn:thin_plate}.

\medskip{}
\noindent{}\textbf{Notation.} For a curve such as ${(\mu_t)}_{t\in [0,1]}$ or ${(X_t)}_{t\in [0,1]}$, defined over $[0,1]$, we use the concise notations $(\mu_t)$ and $(X_t)$ respectively, where the time variable $t$ is always understood to range over the interval $[0,1]$.

\section{\allcaps{Background on Optimal Transport}}\label{scn:ot}
In this section, we recall useful notions from optimal transport and provide some of the key theory used for Wasserstein splines. We refer readers to the standard textbooks~\textcite{villani2003topics, villani2009ot, santambrogio2015ot} for introductory treatments.

Given two probability measures $\mu_0$, $\mu_1$ on $\R^d$ with finite second moment, the $2$-Wasserstein distance  $W_2$ is defined as
\begin{align}\label{eq:w2}
W_2^2(\mu_0, \mu_1) := \inf_{\pi \in \Pi(\mu_0, \mu_1)} \int \|x - y\|^2 \, \D\pi(x, y),
\end{align}
where $\Pi(\mu_0, \mu_1)$ is the set of all joint distributions with marginals
$\mu_0$ and $\mu_1$. This indeed defines a distance 
on probability measures with finite second moment,
and we denote the resulting metric space by $\mc P_2(\R^d)$.
If $\mu_0$ has a density with respect to Lebesgue measure, then the solution of~\eqref{eq:w2} is unique, and it is supported on the graph of a function $T \colon \R^d \to \R^d$, called the {\it Monge map}. Moreover, it is characterized as the unique mapping such that (i) the pushforward of $\mu_0$ via $T$ is $\mu_1$ and (ii) there exists a convex function $\phi \colon \R^d \to \R \cup \infty$ such that $T = \nabla \phi$. That is, if $X_0 \sim \mu_0$, the solution of~\eqref{eq:w2} is the law of $\left(X_0,\nabla \phi(X_0)\right)$. For the rest of the paper, without further comment, we work exclusively with  probability measures that admit a density and have a finite second moment.



It has been understood since the seminal work of Otto that $\mc P_2(\R^d)$ exhibits many of the properties of a Riemannian manifold, a fact which has been instrumental to applications of optimal transport to partial differential equations~\parencite{jko1998, carrillovaes2019covfokkerplanck}, sampling~\parencite{bernton2018jko, durmusetal2019langevin, lulunolen2019birthdeath, chewietal2020mirror, chewietal2020svgd}, and barycenters~\parencite{backhoffetal2018barycenters, zemelpanaretos2019barycenters, chewietal2020buresgd}.
Specifically, given a regular curve ${(\mu_t)}$, there is a well-defined notion of a ``tangent vector'' $v_t$ to the curve at time $t$.
This is a vector {\it field} of instantaneous particle velocities, where $\mu_t$ is interpreted as the law of the particles at time $t$. The field $v_t$ arises from optimally coupling the curve at nearby times, and we have the limiting result
\begin{equation} \label{eq:vmaplimit}
    v_t = \lim_{h \to 0} \frac{T_{\mu_t \to \mu_{t+h}} - \id}{h}\qquad \text{in}~L^2(\mu_t)
\end{equation}
where $T_{\mu_t \to \mu_{t+h}}$ is the Monge map between $\mu_t$ and $\mu_{t+h}$. For a proof see~\textcite[Proposition 8.4.6]{ambrosio2008gradient}.

This differential structure has been especially useful in fluid dynamics, by connecting the equivalent Eulerian and Lagrangian perspectives on particle flows.
The former keeps track of the density $\mu_t$ and velocity $v_t$ of particles passing through any given time and spatial position. In contrast, the Lagrangian perspective tracks the trajectories of individual particles, which can be obtained as integral curves of the velocity fields; that is, we solve the ODE
\begin{equation*}
    \dot{X}_t = v_t(X_t), \qquad X_0 \sim \mu_0.
\end{equation*}
Chosing the vector fields $v_t$ to be the tangent vectors above precisely yields that $X_t\sim \mu_t$. Thus, the Lagrangian perspective associates a natural stochastic process, ${(X_t)}$,  with the curve of measures ${(\mu_t)}$; we therefore refer to the process $(X_t)$ as the \emph{Lagrangian coupling}.
See~\textcite[\S 5.4]{villani2003topics} for further details.

\section{\allcaps{Splines on Euclidean Space, Manifolds, and Wasserstein Space}}\label{scn:splines}

We recall the definition of \emph{natural cubic splines}. 
Given points $(x_0, x_1, \ldots, x_N) \subset \R^d$ to interpolate at a sequence of times
$0 = t_0 < t_1 < \cdots < t_N = 1$, consider the variational problem
\begin{equation}\label{eq:cubic_spline}
    \min_{(\gamma_t)} \int_0^1 \norm{\ddot{\gamma}_t}^2 \, \D t  \;\;\; \text{s.t.}\; \;\; \gamma_{t_i} = x_i ~\text{for all}~i.
\end{equation}
The solution to this minimization problem is a piece-wise cubic polynomial that
is globally $\mc C^2$ and has zero acceleration
at times $t_0 = 0$ and $t_N = 1$.

Based on this energy-minimizing property, there is a natural generalization of cubic splines to Riemannian manifolds: in~\eqref{eq:cubic_spline} the acceleration $\ddot \gamma$ is replaced with its Riemannian analogue, the covariant derivative $\nabla_{\dot \gamma} \dot \gamma$ of the velocity, and the norm $\norm\cdot$ is given by the Riemannian metric.
However, unlike its Euclidean counterpart, there is no general algorithm to fit Riemannian cubic splines, leading to alternative proposals~\parencite{gousenbourgermassartabsil2019bezier}.

In addition to a first-order differentiable structure (the tangent space),~\textcite{gigli2012secondorder} has developed a second-order calculus on $\mathcal{P}_2(\R^d)$, including a covariant derivative $\nabla$. Thus, in analogy with the Riemannian setting, we can define \emph{energy splines} (\emph{E-splines} in short) via
\begin{equation}\label{eq:e_spline}
    \inf_{(\mu_t, v_t)} \int_0^1 \norm{\nabla_{v_t} v_t}_{L^2(\mu_t)}^2 \, \D t ~\text{ s.t. } \mu_{t_i} = \mus_{t_i}~\text{for all}~i
\end{equation}
where the minimization is taken over all curves $(\mu_t)$ and their tangent vectors $(v_t)$ (see Section~\ref{scn:ot}). The solution to this problem naturally yields a stochastic process ${(\Y_t)}$ with  marginal laws ${(\mu_t)}$, namely: we draw $\Y_0 \sim \mu_0$, and conditioned on $\Y_0$ the rest of the trajectory is determined by the ODE $\dot \Y_t = v_t(\Y_t)$.

E-splines were introduced concurrently by~\textcite{chenMeasurevaluedSplineCurves2018, benamouSecondOrderModels2018}. Since E-splines are intractable, these authors proposed a relaxation which we call \emph{path splines} (\emph{P-splines} in short):
\begin{align}\label{eq:p_spline}
    \inf_{(\Y_t)} \int_0^1 \E[\norm{\ddot \Y_t}^2] \, \D t,
\end{align}
where the infimum is taken over stochastic processes $(\Y_t)$ with values in $\R^d$ and such that $\Y_{t_i} \sim \mus_{t_i}$ for all $i=0,1,\dotsc,N$. (This is indeed a relaxation in a formal sense detailed in the papers referenced above.)  The name derives from the fact that this is an optimization over measures in path space, and the problem~\eqref{eq:p_spline} can be reduced to a multimarginal optimal transport problem with quadratic cost. 

Unfortunately, though solvable in principle, the formulation~\eqref{eq:p_spline} remains difficult to compute and
its solution is not necessarily induced by a deterministic map; that is, there is no guarantee of a deterministic function $\phi_t : \R^d\to\R^d$ such that $X_t = \phi_t(X_0)$. This point is particularly problematic for inference of trajectories as illustrated in Figure~\ref{fig:pspline_vs_tspline}.

Given the various definitions of splines, some natural questions arise.
Specifically, the papers above left open the question of whether E-splines coincide with P-splines, and whether the solution to the P-spline problem is necessarily induced by Monge maps.
We conclude this section by resolving these questions in the negative.

\begin{prop}[informal] \label{prop:non_Monge_p_spline}
There exist non-degenerate Gaussian data
$\mus_{t_0},\mus_{t_1},\dotsc,\mus_{t_N}$ such that there
is a unique jointly Gaussian solution to the P-spline problem~\eqref{eq:p_spline} and it is \emph{not} induced by a deterministic map.
\end{prop}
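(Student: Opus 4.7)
The plan is to exhibit a one-dimensional counterexample in which the jointly Gaussian P-spline solution is a single linear curve in a two-dimensional Hilbert subspace, so that correlations between the marginals are strictly bounded away from $\pm 1$.

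Setup and reduction. Take $d=1$, $N=2$, times $t_0=0$, $t_1=1/2$, $t_2=1$, and centered data $\mus_{t_i}=\mathcal N(0,\sigma_i^2)$ with $\sigma_0=\sigma_2=1$ and $\sigma_1=1/2$; the non-centered case reduces to this by subtracting the classical natural cubic spline through the means, which decouples the objective. A centered jointly Gaussian process $(\Y_t)$ with $\E[\Y_t^2]=\sigma_t^2$ admits a representation $\Y_t=\langle f(t),\xi\rangle_H$ for some Hilbert space $H$, an isonormal Gaussian $\xi$, and a deterministic map $f\colon[0,1]\to H$. Under this correspondence the objective in \eqref{eq:p_spline} becomes $\int_0^1 \|\ddot f(t)\|_H^2\,\D t$, the marginal constraints read $\|f(t_i)\|_H=\sigma_i$, and the process is Monge iff the range of $f$ lies on a single line through the origin.

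Construction of the minimizer. Look for an affine curve $f(t)=f_0+tv$ in $H=\R^2$. The sphere constraints
\[
\|f_0\|^2=1,\qquad \|f_0+v\|^2=1,\qquad \|f_0+v/2\|^2=1/4
\]
uniquely determine the Gram data $(\|f_0\|^2,\langle f_0,v\rangle,\|v\|^2)=(1,-3/2,3)$, realized e.g.\ by $f_0=(1,0)$ and $v=(-3/2,\sqrt 3/2)$. Since $\ddot f\equiv 0$, the corresponding process $\Y_t=\langle f(t),\xi\rangle=A+tB$, with $(A,B)$ jointly Gaussian, has P-spline energy $0$ and is therefore a global minimizer of \eqref{eq:p_spline}.

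Uniqueness and non-Monge conclusion. Any other centered jointly Gaussian minimizer must also attain energy $0$, so $\E[\ddot\Y_t^2]=0$ for a.e.\ $t$, forcing $\Y_t=A'+tB'$ for a single jointly Gaussian pair $(A',B')$. The three variance equations at $t_0,t_1,t_2$ pin down $(\E[A'^{\,2}],\E[A'B'],\E[B'^{\,2}])=(1,-3/2,3)$, so the minimizer is unique in distribution. If this $\Y$ were induced by a deterministic map, $\Y_{1/2}$ would be a Borel function of $\Y_0$; for a jointly Gaussian pair this forces $|\mathrm{Cor}(\Y_0,\Y_{1/2})|=1$. But $\mathrm{Cor}(\Y_0,\Y_{1/2})=\E[\Y_0\Y_{1/2}]/(\sigma_0\sigma_1)=(1/4)/(1/2)=1/2$, a contradiction.

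The only nontrivial input is the Hilbert-space representation of centered jointly Gaussian processes and the identification $\E[\ddot\Y_t^2]=\|\ddot f(t)\|_H^2$; this is a routine Karhunen–Loève computation that simply needs to be recorded. Everything else is elementary arithmetic, and varying $\sigma_1\in(0,1)$ produces a continuum of similar counterexamples.
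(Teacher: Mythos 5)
Your proof is correct and follows essentially the same strategy as the paper's: exhibit Gaussian data for which a zero-cost (straight-line) jointly Gaussian coupling exists, observe that any jointly Gaussian minimizer must therefore also have affine sample paths, note that the three variance constraints then pin down the covariance structure uniquely, and conclude non-determinism from $|\mathrm{Cor}(\Y_0,\Y_{1/2})|<1$. The only differences are cosmetic: you choose $\sigma_1 = 1/2$ instead of the paper's $\sigma_{1/2} = 1/\sqrt 2$ (which makes the paper's optimal coupling have the cleaner feature that $\X_1 \perp \X_0$), and you phrase the zero-cost reduction via the Cameron--Martin representation $\Y_t = \langle f(t),\xi\rangle_H$ rather than directly in terms of the endpoint random variables $(\X_0,\X_1)$; both packagings lead to the identical uniqueness computation.
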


\begin{prop}[informal] \label{prop:e_spline_p_spline}
There exist non-degenerate Gaussian
data $\mus_{t_0},\mus_{t_1},\dotsc,\mus_{t_N}$ for which the E-spline~\eqref{eq:e_spline} and P-spline~\eqref{eq:p_spline} interpolations do not coincide.
\end{prop}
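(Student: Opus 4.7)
The plan is to exhibit simple one-dimensional zero-mean Gaussian data for which the E-spline and P-spline interpolations are visibly different. I would take three knots at $t_0 = 0$, $t_1 = 1/2$, $t_2 = 1$ with $\mus_{t_i} = N(0, \sigma_i^2)$ and $(\sigma_0, \sigma_1, \sigma_2) = (\sigma, \epsilon, \sigma)$ for fixed $\sigma \gg \epsilon > 0$. The key mechanism is that the P-spline can exploit a ``sign-flip'' in its defining rank-one Gaussian parameterization, while the E-spline is confined to the positive-standard-deviation half-line of the Bures--Wasserstein submanifold.

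First, I would identify the E-spline. Along a zero-mean one-dimensional Gaussian curve $\mu_t = N(0, \sigma_t^2)$, the velocity field $v_t(x) = (\dot\sigma_t/\sigma_t)\,x$ is linear, the material derivative $\partial_t v_t + (\nabla v_t)\,v_t = (\ddot\sigma_t/\sigma_t)\,x$ is also linear and hence already a gradient, so $\nabla_{v_t} v_t$ coincides with it. The E-spline cost collapses to $\int_0^1 \ddot\sigma_t^2\,\D t$, and since the Bures--Wasserstein submanifold of zero-mean one-dimensional Gaussians is isometric to the Euclidean half-line $(0,\infty)$, the minimizer is the classical cubic spline $\tilde\sigma_t$ through $(\sigma, \epsilon, \sigma)$, whose V-shape forces energy of order $\sigma^2$.

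Next, I would construct a strictly cheaper P-spline feasible. Consider the jointly Gaussian rank-one process $\Y_t = g_t W$ with $W \sim N(0,1)$ and $g_t$ the one-dimensional cubic spline through the \emph{sign-flipped} values $(\sigma, \epsilon, -\sigma)$. Since $g_{t_i}^2 = \sigma_i^2$, the Gaussian marginal constraints are satisfied, and a direct computation gives $\E[\ddot\Y_t^2] = \ddot g_t^2$, so the total P-spline cost is $\int \ddot g_t^2\,\D t$, of order $\epsilon^2$, strictly smaller than the E-spline cost for $\sigma \gg \epsilon$. Moreover, continuity forces $g_t$ to vanish at some $t^\star \in (1/2, 1)$, so the resulting P-spline marginal $N(0, g_{t^\star}^2) = \delta_0$ is degenerate, while the E-spline marginal $N(0, \tilde\sigma_{t^\star}^2)$ is non-degenerate; the two measure-valued curves therefore cannot coincide.

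The main obstacle will be rigorously identifying the E-spline of these data with the non-degenerate Bures--Wasserstein cubic spline and ruling out a cheaper non-Gaussian or degenerate competitor in the ambient Wasserstein E-spline problem. This is precisely the subtlety hidden behind the ``informal'' qualifier; it can be resolved by restricting, as is standard in Gigli's second-order calculus, to absolutely continuous curves with square-integrable gradient velocity fields, within which the computation above shows that the minimizer is the non-degenerate Gaussian spline, distinct from the P-spline optimum.
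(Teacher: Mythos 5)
Your construction is in the right spirit, but the argument as written has a genuine gap on the P-spline side that you do not acknowledge. You exhibit a feasible coupling --- the rank-one process $\Y_t = g_t W$ with $g_t$ the cubic spline through the sign-flipped values $(\sigma,\epsilon,-\sigma)$ --- and observe that it has small P-spline cost $O(\epsilon^2)$ and passes through $\delta_0$ at some $t^\star$. From this you conclude that ``the resulting P-spline marginal $\delta_0$ is degenerate \dots the two measure-valued curves therefore cannot coincide.'' But you have not shown this feasible process is \emph{the} optimal P-spline. In fact, with three equally spaced knots the P-spline objective reduces (up to a constant) to $\E[(\Y_0 - 2\Y_{1/2} + \Y_1)^2]$, and one can check that the minimizers form a whole family of couplings with $\operatorname{Cov}(\Y_0,\Y_1) = -\sigma^2$ and $\operatorname{Cov}(\Y_0,\Y_{1/2}) + \operatorname{Cov}(\Y_{1/2},\Y_1) = 0$; most of these are rank two, and for those the interpolated variance $S_t^\top \Sigma S_t$ never hits zero, so your degeneracy observation does not carry over. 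To close the gap you would have to argue that \emph{no} optimal P-spline coupling reproduces the E-spline variance curve $\tilde\sigma_t^2$ --- for instance by expanding $S_t^\top \Sigma S_t$ and $\tilde\sigma_t^2$ around $t = 0$ and showing the required correlation parameter violates Cauchy--Schwarz when $\sigma \gg \epsilon$ --- which is real extra work. You also correctly flag but do not resolve the need to pin down the ambient Wasserstein E-spline (the paper does this via Propositions~\ref{prop:e_spline_proj}, \ref{prop:K_to_H}, \ref{prop:GW_is_BW} and Hall's spline error bound to certify positivity).

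The paper's route is genuinely different and avoids exactly this difficulty: it takes $\sigma_t^2 = (1-t)^2 + t^2$, for which the independent-endpoint process $\X_t = (1-t)\X_0 + t\X_1$ has P-spline cost \emph{exactly zero}, so the optimal (Gaussian) P-spline coupling is forced, uniquely, via the ``supported on straight lines'' argument of Proposition~\ref{prop:non_Monge_p_spline_formal}. With the P-spline marginal curve pinned down to $N(0,(1-t)^2+t^2)$, the comparison to the E-spline is then a one-line algebraic-versus-non-algebraic distinction (cubic spline vs.\ $\sqrt{(1-t)^2+t^2}$). Choosing data where the P-spline cost is identically zero is what buys uniqueness and makes the comparison clean; your data have strictly positive P-spline cost, and so the identification of the P-spline optimum is no longer free.
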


Investigation of these questions requires some care, since there are many subtleties regarding the definitions. We give a careful discussion and proofs in Appendix~\ref{appendix:example}.

\section{\allcaps{Transport Splines}}\label{scn:transport_splines}

\subsection{The Algorithm}\label{scn:transport_spline_alg}

To address the difficulties discussed in the previous section, we propose a new method for measure interpolation, which we call \emph{transport splines}.
Our framework decouples the interpolation problem into two steps:
\begin{enumerate}
    \item Couple the given measures, that is, construct a random vector $(\Y_{t_0},\Y_{t_1},\dotsc,\Y_{t_N})$ with the specified marginal laws $\mus_{t_0}, \mus_{t_1},\dotsc,\mus_{t_N}$.
    \item Apply a Euclidean interpolation algorithm to the points $\Y_{t_0},\Y_{t_1},\dotsc,\Y_{t_N}$.
\end{enumerate}
A convenient choice for the second step is to use cubic splines,  but our framework works equally well with other standard Euclidean methods and can be adapted to the application at hand. We illustrate this point in Section~\ref{scn:thin_plate}, where we construct surfaces interpolating one-dimensional measures using thin-plate splines.

A simple and practical choice for the first step, which we explore in the present paper, is to couple the random variables $\Y_{t_0}, \Y_{t_1},\dotsc,\Y_{t_N}$ successively using the Monge maps between them.
That is, we draw $\Y_{t_0} \sim \mus_{t_0}$, and for each $i=1,\dotsc,N$ we set $\Y_{t_i} = T_i(\Y_{t_{i-1}})$, where $T_i$ is the Monge map from $\mus_{t_{i-1}}$ to $\mus_{t_i}$. The second step then reduces to interpolating $\Y_{t_0}, T_1(\Y_{t_0}), \ldots, T_N\circ\cdots\circ T_1(\Y_{t_0})$ in Euclidean space. The interpolation property of transport splines follows readily from the definition of Monge maps since $T_i \circ \dots \circ T_1(\Y_{t_0})\sim \mus_{t_i}$.

For the task of outputting sample trajectories from the transport spline, we summarize our method in Algorithm~\ref{alg:interpolate}, and we display an application to the reconstruction of trajectories in a many-body physical system in Figure~\ref{fig:nbody}. In the next section, we provide detailed motivation for the first step of the algorithm which builds on background from Sections~\ref{scn:ot} and~\ref{scn:splines}.

\begin{algorithm}[H]
  \caption{Sample Transport Spline Trajectories}\label{alg:interpolate}
  \begin{algorithmic}[1]
    \Procedure{interpolate}{${(t_i)}_{i=0}^N$, ${(\mus_{t_i})}_{i=0}^N$}
    \State Draw $\Y_{t_0} \sim \mus_{t_0}$
      \For{$i = 1, \ldots, N$}
      \State Set $\Y_{t_i} = T_i(\Y_{t_{i-1}})$, where $T_i$ is the Monge map from $\mus_{t_{i-1}}$ to $\mus_{t_i}$
      \EndFor
      \State Interpolate the points $\Y_{t_0},\Y_{t_1},\dotsc,\Y_{t_N}$ to obtain a curve ${(\Y_t)}$
      \State \textbf{output} ${(\Y_t)}$
    \EndProcedure   
  \end{algorithmic}
\end{algorithm}

\begin{figure}
\centering
\includegraphics[clip, width=0.78\textwidth]{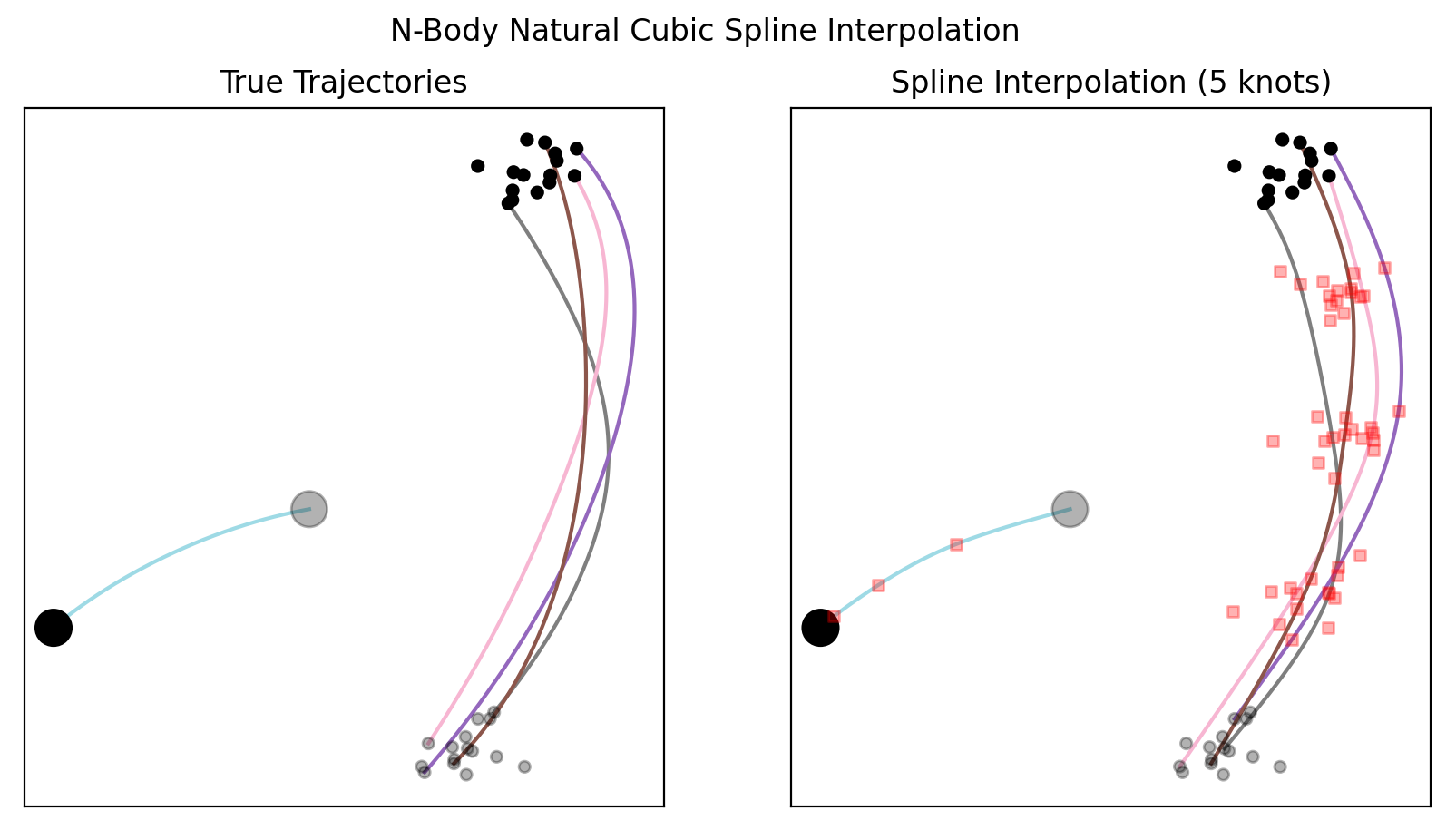}
\vspace{-0em}
\caption{\small Reconstruction of trajectories in a physical system. See Appendix~\ref{appendix:nbody}.}
\label{fig:nbody}
\end{figure}

\subsection{Motivation}

The choice of coupling in the first step of our method is motivated by the geometry of $\mathcal{P}_2(\R^d)$. If the observations $\mus_{t_0}, \ldots, \mus_{t_N}$ sit along a  curve of measures ${(\mus_t)}$, then (as discussed in Section~\ref{scn:ot}) there is an associated Lagrangian coupling ${(\X_t)}$ satisfying $\dot X^\star_t = v_t^\star(\X_t)$. Thus if $\delta = t_1 - t_0$, then $\X_{t_1} = \X_{t_0} + \delta v_{t_0}^\star(\X_{t_0}) + o(\delta)$. On the other hand, from~\eqref{eq:vmaplimit} the Monge map $T_1$ gives a first-order approximation to $v_{t_0}^\star$: $T_1 - \id = \delta v_{t_0}^\star + o(\delta)$ (see ~\textcite[Proposition 8.4.6]{ambrosio2008gradient}). Combining these approximations we get $T_1(\X_0) = \X_{t_1} + o(\delta)$. 
From this heuristic discussion, one expects that as the mesh size $\max_{i\in [N]}(t_i - t_{i-1})$ tends to zero, the coupling $\Y_{t_0},\Y_{t_1},\dotsc,\Y_{t_N}$ obtained via successive Monge maps is a good approximation to the Lagrangian coupling  $(\X_{t_0}, \X_{t_1},\dotsc,\X_{t_N})$.


\subsection{Relationship with E-Splines in One Dimension}

Although E-splines are in general intractable, in the one-dimensional case it turns out that there are many situations of interest in which E-splines coincide with transport splines.
Indeed, suppose that the measures $\mus_{t_0},\mus_{t_1},\dotsc,\mus_{t_N}$ are all one-dimensional, and for a measure $\mu$ let $F_\mu^\dagger$ denote its quantile function.\footnote{Under our assumption that the measures are absolutely continuous, the quantile function $F_\mu^\dagger$ simply coincides with the inverse CDF $F_\mu^{-1}$, but we use the quantile function notation here to reflect the general embedding $\mc P_2(\R) \hookrightarrow L^2[0,1]$.} Let $(G_t)$ be the natural cubic spline in $L^2[0,1]$ interpolating the quantile functions $F_{\mus_{t_0}}^\dagger, F_{\mus_{t_1}}^\dagger,\dotsc,F_{\mus_{t_N}}^\dagger$. Then:

\begin{thm}\label{thm:e_vs_transport_1d}
Suppose that for all $t$, $G_t$ is a valid\footnote{A valid quantile function $G_t \colon [0,1] \to \R \cup \{\pm\infty\}$ is increasing and right-continuous.} quantile function. Then the transport spline and the E-spline~\eqref{eq:e_spline} both coincide with the curve $(\mu_t)$ where $\mu_t$ has quantile function $G_t$. Furthermore, if $(X_t)$ is the stochastic process associated with the transport spline and $(X_t^\star)$ is the Lagrangian coupling for the E-spline, then $(X_t)$ and $(X_t^\star)$ have the same distribution as the law of $(G_t(U))$, where $U$ is a uniform random variable on $[0, 1]$.



\end{thm}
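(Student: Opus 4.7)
The plan is to exploit the classical isometric embedding $\mu \mapsto F_\mu^\dagger$ of $\mc P_2(\R)$ into $L^2[0,1]$, under which $W_2(\mu,\nu) = \|F_\mu^\dagger - F_\nu^\dagger\|_{L^2[0,1]}$ and the image is the convex cone of valid quantile functions. Wasserstein geodesics become straight lines of quantile functions; tangent vector fields $v_t$ correspond to time derivatives via $v_t\circ G_t = \partial_t G_t$ (writing $G_t := F_{\mu_t}^\dagger$); and the Wasserstein covariant derivative corresponds to the second time derivative. The hypothesis that the $L^2$-valued cubic spline $(G_t)$ stays inside the cone of valid quantile functions is precisely what allows the constrained Wasserstein variational problem to coincide with its unconstrained $L^2[0,1]$ counterpart.

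\medskip
\noindent For the transport spline, I would first recall that in one dimension the Monge map from $\mu$ to $\nu$ has the closed form $T = F_\nu^\dagger \circ F_\mu$. Writing $X_{t_0} = F_{\mus_{t_0}}^\dagger(U)$ with $U \sim \mathrm{Unif}[0,1]$ and using $F_{\mus_{t_{i-1}}} \circ F_{\mus_{t_{i-1}}}^\dagger = \id$ almost surely, induction yields $X_{t_i} = F_{\mus_{t_i}}^\dagger(U)$ for every $i$. Because Euclidean natural cubic spline interpolation is a linear operator in the interpolation values, interpolating the real-valued knots $F_{\mus_{t_i}}^\dagger(u)$ pointwise in $u$ produces exactly $G_t(u)$. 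Hence the transport spline process equals $(G_t(U))$ path-by-path, and its time-$t$ marginal has quantile function $G_t$ under the valid-quantile hypothesis.

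\medskip
\noindent For the E-spline, the key reduction is to rewrite the energy~\eqref{eq:e_spline} through the quantile embedding. The identity $T_{\mu_t \to \mu_{t+h}} \circ G_t = G_{t+h}$ together with~\eqref{eq:vmaplimit} gives $v_t \circ G_t = \partial_t G_t$ in $L^2[0,1]$, so $\|v_t\|_{L^2(\mu_t)} = \|\partial_t G_t\|_{L^2[0,1]}$. Differentiating once more along the particle trajectories $t \mapsto G_t(u)$ (parameterized by the Lagrangian label $u$), the Wasserstein acceleration satisfies $(\nabla_{v_t} v_t)\circ G_t = \partial_t^2 G_t$, so the E-spline energy becomes $\int_0^1 \|\partial_t^2 G_t\|_{L^2[0,1]}^2 \, \D t$. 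This is a Hilbert-space natural cubic spline problem whose minimizer subject to $G_{t_i} = F_{\mus_{t_i}}^\dagger$ is precisely the $(G_t)$ of the theorem; by hypothesis it is already admissible (each $G_t$ is a valid quantile function), so it remains optimal under the Wasserstein constraint. Finally, the Lagrangian coupling solves $\dot X_t^\star = v_t^\star(X_t^\star)$ with $X_0^\star = G_0(U)$, and since $v_t^\star(G_t(u)) = \partial_t G_t(u)$, ODE uniqueness forces $X_t^\star = G_t(U)$, matching the transport spline process.

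\medskip
\noindent The main obstacle is the rigorous identification of the Wasserstein covariant derivative with the second $L^2$-derivative of $G_t$, which in principle invokes Gigli's second-order calculus on $\mc P_2(\R^d)$. In one dimension this becomes essentially tautological once one observes that the embedding image is convex and so the curve lies in a ``flat'' part of $\mc P_2(\R)$, but carrying this out carefully is the delicate point. The remainder of the argument is bookkeeping built on linearity of Euclidean cubic spline interpolation and ODE uniqueness for the particle trajectories.
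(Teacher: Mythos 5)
Your proposal follows essentially the same route as the paper's proof in Appendix~\ref{appendix:e_vs_t_1d}: represent all of the knots as $F_{\mus_{t_i}}^\dagger(U)$ via the explicit one-dimensional Monge maps $T = F_\nu^\dagger \circ F_\mu$, use linearity of cubic spline interpolation to identify the transport spline process with $(G_t(U))$, pass the E-spline energy through the isometric embedding $\mc P_2(\R) \hookrightarrow L^2[0,1]$ so it becomes the Hilbert-space natural cubic spline problem (using exactly the identification $\nabla_{v_t} v_t \circ G_t = \partial_t^2 G_t$ that the paper verifies formally in Appendix~\ref{appendix:splines_isometry}), and invoke the valid-quantile hypothesis to conclude the unconstrained $L^2$ minimizer remains admissible and hence optimal. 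The paper additionally records uniqueness via Proposition~\ref{prop:e_spline_proj}, but otherwise your argument and the paper's are the same.
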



We emphasize that, in light of the counterexamples described at the end of Section~\ref{scn:splines}, the P-spline and E-spline are likely to differ generically and, in fact, they differ in the Gaussian case, which is covered by the above theorem (see Appendix~\ref{appendix:e_and_p_different}). Therefore, it appears that the transport spline is more suitable as a relaxation of the E-spline when interpolating univariate distributions.

We give the proof of Theorem~\ref{thm:e_vs_transport_1d} in Appendix~\ref{appendix:e_vs_t_1d}.

\section{\allcaps{The Gaussian Case}}\label{scn:gaussian_case}
We now focus on the Gaussian case and we assume that we employ natural cubic splines in Step~2 of our algorithm. For simplicity, we can assume that the measures are centered.\footnote{The discussion here extends easily to incorporate non-centered measures.} A centered non-degenerate Gaussian can be identified with its covariance matrix, and the Wasserstein distance induces a Riemannian metric on the space of positive definite matrices. The resulting manifold is called the Bures-Wasserstein space~\parencite[after][]{bures1969}; see~\textcite{bhatiajainlim2019bures} for a comprehensive survey.

\subsection{Gaussian Transport Splines}

It is known that the Monge map from Gaussian $\Normal(0, \Sigma_1)$ to $\Normal(0, \Sigma_2)$ is the linear map $T$ given by
\begin{equation}\label{eq:gaussian_ot_map}
    T(X) = \Sigma_1^{-1/2} \bigl(\Sigma_1^{1/2} \Sigma_2 \Sigma_1^{1/2} \bigr)^{1/2} \Sigma_1^{-1/2} X
\end{equation}


Cubic splines have the property that the interpolation evaluated at time $t$ is a linear function of the interpolated points ${(x_{t_i})}_{i=0}^N$. That is, there is a linear map $S_t$ (indexed by time) such that $t \mapsto S_t(x_{t_0}, \ldots, x_{t_N})$ is the cubic spline interpolating the data.\footnote{Note that the matrix $S_t$ is independent of ${(x_{t_i})}_{i=0}^N$, but depends on the time grid ${(t_i)}_{i=0}^N$.}

This fact follows from the discussion in Appendix~\ref{appendix:cubic_splines} and it has important consequences for our algorithm:
\begin{enumerate}
    \item It implies that our algorithm outputs a process $(\Y_t)$ such that $\Y_t$ is a linear function of $\Y_{t_0},\Y_{t_1},\dotsc,\Y_{t_N}$.
    On the other hand, each $\Y_{t_i}$ is a linear function of $\Y_{t_0}$, which follows from the description of Step 1 of our algorithm and the fact that Monge maps between Gaussians are linear~\eqref{eq:gaussian_ot_map}.
    
    Since a linear function of a Gaussian is also Gaussian, we conclude that \emph{the transport spline interpolating Gaussian measures only passes through Gaussian measures}.
    \item From the previous point, it is clear that the covariance matrix of $\Y_t$ can be computed in terms of $S_t$, $\Sigma_{t_0}$, and the Monge maps (which have the closed-form expression~\eqref{eq:gaussian_ot_map}).
    We conclude that in this setting, not only can we output sample trajectories as in Algorithm~\ref{alg:interpolate}, but \emph{we can also efficiently output the covariance matrices of the interpolated measures}.
\end{enumerate}

Furthermore, this discussion extends to any other interpolation method with this linearity property, such as higher-order splines, polynomial interpolation, and thin-plate splines.

We also remark that in the case where the data consists of \emph{one-dimensional} Gaussian distributions, then in many cases the transport spline and the E-spline (described in Section~\ref{scn:splines}) coincide.

\begin{prop}\label{prop:gaussian_e_vs_t}
    Suppose that $\mus_{t_0}, \mus_{t_1},\dotsc,\mus_{t_N}$ are one-dimensional Gaussians. Then, if the transport spline $(\mu_t)$ interpolating these data is never degenerate, i.e., $\mu_t$ is a non-degenerate Gaussian for each $t \in [0,1]$, then the conditions of Theorem~\ref{thm:e_vs_transport_1d} hold.
\end{prop}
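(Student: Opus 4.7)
The plan is to compute both the transport spline and the quantile-function spline $G_t$ explicitly in the 1D Gaussian setting, and then read off the claim from the fact that non-degeneracy forces a common ``standard-deviation spline'' to stay strictly positive.

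First, exploit the closed form of 1D Gaussian Monge maps. Writing $\mus_{t_i} = \Normal(m_i, \sigma_i^2)$, the Monge map from $\mus_{t_{i-1}}$ to $\mus_{t_i}$ is the affine map $T_i(x) = m_i + (\sigma_i/\sigma_{i-1})(x - m_{i-1})$. Iterating Step~1 of Algorithm~\ref{alg:interpolate} from $Y_{t_0} \sim \mus_{t_0}$ and setting $Z := (Y_{t_0} - m_0)/\sigma_0 \sim \Normal(0,1)$ gives $Y_{t_i} = m_i + \sigma_i Z$ for every $i$. Since natural cubic spline interpolation in $\R$ is linear in the data, the transport spline trajectory is
\[
Y_t = m(t) + \sigma(t)\, Z,
\]
where $m(t)$ and $\sigma(t)$ are the natural cubic splines in $\R$ interpolating $(m_i)_{i=0}^N$ and $(\sigma_i)_{i=0}^N$ respectively. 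In particular, $\mu_t = \Normal(m(t), \sigma(t)^2)$, which is non-degenerate if and only if $\sigma(t) \neq 0$.

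Next, identify the natural cubic spline $G_t$ in $L^2[0,1]$. For each $i$, the quantile function of $\mus_{t_i}$ is $F_{\mus_{t_i}}^\dagger = m_i \cdot \mathbf{1} + \sigma_i \cdot \Phi^{-1}$, where $\mathbf{1}$ is the constant function $1$ on $[0,1]$. A direct computation shows $\langle \mathbf{1},\Phi^{-1}\rangle_{L^2[0,1]} = \E[\Phi^{-1}(U)] = 0$ and $\|\mathbf{1}\|_{L^2}^2 = \|\Phi^{-1}\|_{L^2}^2 = 1$, so $\{\mathbf{1},\Phi^{-1}\}$ is an orthonormal pair in $L^2[0,1]$. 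Consequently, on the 2-dimensional affine subspace they span, the $L^2[0,1]$ inner product reduces to the standard inner product on coordinates $(m,\sigma)\in \R^2$, and since the entire data set $(F_{\mus_{t_i}}^\dagger)$ lies in this subspace, so does the minimizer of the natural cubic spline energy (by orthogonal projection, which commutes with differentiation and preserves the interpolation constraints and the natural boundary conditions). The spline problem therefore decouples into the scalar natural cubic spline problems for $(m_i)$ and $(\sigma_i)$, yielding
\[
G_t = m(t)\cdot \mathbf{1} + \sigma(t)\cdot \Phi^{-1},
\]
with exactly the same $m(t),\sigma(t)$ as in the first step.

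Finally, feed in the non-degeneracy hypothesis. If $\mu_t$ is non-degenerate for every $t\in[0,1]$, then $\sigma(t)\neq 0$ on $[0,1]$; since $\sigma$ is continuous (being a cubic spline) and $\sigma(t_0) = \sigma_0 > 0$, the intermediate value theorem forces $\sigma(t) > 0$ throughout. Then $G_t(u) = m(t) + \sigma(t)\Phi^{-1}(u)$ is strictly increasing and continuous in $u$, i.e., a valid quantile function for every $t$, which is the hypothesis of Theorem~\ref{thm:e_vs_transport_1d}.

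The only step requiring genuine care is the reduction showing $G_t \in \mathrm{span}\{\mathbf{1},\Phi^{-1}\}$; once the orthonormality of $\mathbf{1}$ and $\Phi^{-1}$ in $L^2[0,1]$ is noted, this reduction is a standard projection argument and the rest of the proof is routine.
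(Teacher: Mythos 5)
Your proof is correct and takes essentially the same approach as the paper, which identifies the one-dimensional Gaussian measures (via their quantile functions $m\cdot\mathbf 1 + \sigma\cdot\Phi^{-1}$) with a two-dimensional half-space of $L^2[0,1]$ and applies the projection argument of Proposition~\ref{prop:e_spline_proj}. You fill in the details the paper leaves implicit: the orthonormality of $\{\mathbf 1,\Phi^{-1}\}$ in $L^2[0,1]$, the coordinate-wise decoupling into scalar natural cubic splines $m(t)$ and $\sigma(t)$, and the use of the intermediate value theorem to upgrade $\sigma(t)\ne 0$ to $\sigma(t)>0$.
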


As discussed above, the transport spline through Gaussians automatically remains Gaussian, so the only hypothesis to check in this proposition is the non-degeneracy. See Appendix~\ref{appendix:e_vs_t_1d} for a discussion.

\subsection{Approximation Guarantees}\label{scn:approx_guarantee}

Our method is the first to provide approximation guarantees on Wasserstein space. In order to obtain strong quantitative results, we focus on the Bures-Wasserstein setting detailed in the previous section, where all measures $\mus_{t_i}$ are centered non-degenerate Gaussian distributions.

The Bures-Wasserstein space has already been used in works such as~\textcite{modin2017matrixdecomposition, chewietal2020buresgd} as a prototypical setting in which to understand the behavior of algorithms set on the general Wasserstein space. Although the Bures-Wasserstein space is a Riemannian manifold and transport splines can in principle be studied using purely Riemannian techniques, we give proofs inspired by optimal transport so that the analysis may be more easily extended to other settings of interest.

We now state our main approximation result.

\begin{thm}\label{thm:approx_thm}
Let ${(\mus_t)}$ be a curve of measures in Bures-Wasserstein space, and let ${(\X_t)} \sim {(\mus_t)}$ be the Lagrangian coupling.
Let:
\begin{itemize}
    \item $L := \sup_{t\in [0,1]}{\norm{ \dot{X}^\star_t}_{L^2(\Pr)}}$ be the Lipschitz constant of the curve, and
    \item $R := \sup_{t\in [0,1]}{\norm{\ddot{X}^\star_t}_{L^2(\Pr)}}$ be an upper bound on its curvature, and
    \item $\lambda_{\min}$ be a lower bound on the eigenvalues of the covariance matrices of $\mu_{t_0}^\star,\mu_{t_1}^\star,\dotsc,\mu_{t_N}^\star$.
\end{itemize}

Let ${(\mu_t)}$ be the cubic transport spline interpolating $\mus_{t_0},\dotsc,\mus_{t_N}$ and assume
\begin{align}\label{eq:mesh_condition}
    \alpha \delta
    &\le t_i - t_{i-1} \le \delta, \qquad\text{for}~i=1,\dotsc,N\,,
\end{align}
where $\alpha, \delta>0$.
Then, provided that $\delta < \sqrt{\lambda_{\min}}/(2L)$, we have the following approximation guarantee:
\begin{align*}
    \sup_{t\in [0,1]} W_2(\mu_t,\mus_t) \le \frac{58}{\alpha^3} \, R\delta^2.
\end{align*}
\end{thm}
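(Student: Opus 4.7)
The plan is to reduce the $W_2$ bound to an $L^2$ estimate via the canonical coupling: since $\Y_{t_0} = \X_{t_0}$ identifies the two processes on a common probability space, one has $W_2(\mu_t, \mus_t) \le \|\Y_t - \X_t\|_{L^2(\Pr)}$. Following the familiar strategy from numerical ODE analysis, I would introduce the \emph{reference spline} $Z_t$, namely the Euclidean natural cubic spline interpolating the true Lagrangian positions $\X_{t_0},\dotsc,\X_{t_N}$, and split the error by the triangle inequality
\[ \|\Y_t - \X_t\|_{L^2} \le \|\Y_t - Z_t\|_{L^2} + \|Z_t - \X_t\|_{L^2}. \]
The two pieces capture, respectively, the discrepancy between the transport-map knots and the Lagrangian knots, and the classical cubic spline interpolation error.

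For the second term I invoke standard Euclidean spline approximation: for any $\mc C^2$ function $f\colon[0,1]\to\R^d$ and its natural cubic spline $Sf$ on a quasi-uniform mesh of ratio $\alpha$ and maximum step $\delta$, one has $\|f - Sf\|_\infty \le C\alpha^{-3}\delta^2\|\ddot f\|_\infty$ with an explicit small constant $C$. Applying this bound sample-pathwise and integrating against $\Pr$, combined with $\sup_t \|\ddot\X_t\|_{L^2(\Pr)} \le R$, yields $\sup_t \|Z_t - \X_t\|_{L^2} \le C_1 R\delta^2/\alpha^3$.

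For the first term, the natural cubic spline operator is linear in its interpolation data, so $\Y_t - Z_t$ is itself a natural cubic spline, interpolating the knot displacements $D_i := \Y_{t_i} - \X_{t_i}$ (with $D_0 = 0$). The Lebesgue constant on a quasi-uniform mesh is bounded uniformly in $\alpha^{-1}$, so this reduces to controlling $\max_i \|D_i\|_{L^2}$. Using that Gaussian Monge maps are linear by \eqref{eq:gaussian_ot_map}, and denoting by $T_i$ the Monge matrix from $\mus_{t_{i-1}}$ to $\mus_{t_i}$, the recursion $\Y_{t_i} = T_i \Y_{t_{i-1}}$ gives $D_i = T_i D_{i-1} + e_i$ with local defect $e_i := T_i \X_{t_{i-1}} - \X_{t_i}$. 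The eigenvalue lower bound $\lambda_{\min}$ together with the stepsize restriction $\delta < \sqrt{\lambda_{\min}}/(2L)$ ensures covariances stay uniformly non-degenerate along the curve, justifying a second-order Taylor expansion of the explicit Monge formula \eqref{eq:gaussian_ot_map} around $\Sigma_{t_{i-1}}$. Combining this with the Taylor expansion $\X_{t_i} = \X_{t_{i-1}} + \delta_i \dot\X_{t_{i-1}} + \int_{t_{i-1}}^{t_i}(t_i-s)\,\ddot\X_s\,ds$ and the first-order identity \eqref{eq:vmaplimit}, the linear terms cancel and the quadratic remainder is controlled by the curvature, giving $\|e_i\|_{L^2} \lesssim R\delta^2$.

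The main technical obstacle is the propagation from these local defects to the global knot displacement. A naive telescoping of $\|D_i\|_{L^2} \le \|T_i\|_{\text{op}}\|D_{i-1}\|_{L^2} + \|e_i\|_{L^2}$ with $\|T_i\|_{\text{op}} \le 1 + O(L\delta/\sqrt{\lambda_{\min}})$ over $N \le 1/(\alpha\delta)$ steps yields only $\|D_N\|_{L^2} = O(R\delta/\alpha)$, losing a full factor of $\delta$ relative to the target. Overcoming this requires either (i) extracting cancellation in the telescoped sum $D_N = \sum_i T_N \cdots T_{i+1}\,e_i$ by exploiting that each $e_i$ resembles a mean-zero acceleration-like quantity, or (ii) bypassing the pointwise $L^2$ coupling in favor of the Bures--Wasserstein identity $W_2^2(\Normal(0, AA^\top), \Normal(0, BB^\top)) = \min_U \|A - BU\|_F^2$ over orthogonal $U$: since $\Y_{t_i}$ and $\X_{t_i}$ share the same covariance $\Sigma_{t_i}$ at every knot, the rotational freedom in $U$ can absorb the $O(\delta)$ knot discrepancy, leaving only the intrinsically second-order deviation of the matrix-valued spline from the Bures--Wasserstein constraint manifold. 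Tracking constants carefully through the cubic spline Lebesgue bound, the Monge Taylor expansion, and this rotational alignment should then yield the stated prefactor $58/\alpha^3$.
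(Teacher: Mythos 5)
Your proposal takes a genuinely different route from the paper's, and you have correctly diagnosed where it breaks. With the single global coupling $\Y_{t_0} = \X_{t_0}$, the knot displacements $D_i = \Y_{t_i} - \X_{t_i}$ must be propagated through all $N \approx 1/(\alpha\delta)$ intervals, and summing $O(N)$ local defects of size $O(R\delta^2)$ gives only $O(R\delta/\alpha)$, a full power of $\delta$ short. This is a real gap, not a matter of tracking constants. Your fix~(i) does not repair it: the local defects $e_i = T_i\X_{t_{i-1}} - \X_{t_i}$ are worst-case $O(R\delta_i^2)$ with no mean-zero or oscillatory structure across $i$ (they are essentially the local truncation errors of an explicit Euler scheme for the flow of $v^\star_t$), so there is no reason to expect cancellation in $D_N = \sum_i T_N\cdots T_{i+1}\, e_i$. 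Your fix~(ii) has the right intuition --- at every knot the laws of $\Y_{t_i}$ and $\X_{t_i}$ coincide exactly, so the growing pointwise displacement is an artifact of the particular $L^2$ coupling and carries no Wasserstein content --- but you leave it as a speculation, and it is precisely the step that cannot be skipped.

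The paper's implementation of that intuition is cleaner and sidesteps the accumulation entirely: it bounds $W_2(\mu_t,\mus_t)$ separately on each mesh interval $[t_{i-1},t_i]$, re-coupling $(\Y_t)$ and $(\X_t)$ so that $\Y_{t_{i-1}} = \X_{t_{i-1}}$, which is legitimate because $\mu_{t_{i-1}} = \mus_{t_{i-1}}$ and both processes are deterministic functions of their value at $t_{i-1}$. With that per-interval reset it suffices that the two curves share the left endpoint, that their velocities there differ by $O(R\delta/\alpha^3)$ (Proposition~\ref{prop:velocity_control}), and that each has $L^2$-curvature $O(R/\alpha^3)$ (Proposition~\ref{prop:curv_transport_spline} for $\Y$, the hypothesis on $R$ for $\X$); two such curves then remain $O(R\delta^2/\alpha^3)$-close over a single interval of length at most $\delta$, and there is nothing to telescope. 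Note that per-interval re-coupling is incompatible with your reference-spline decomposition --- once you re-couple at $t_{i-1}$, the other knots no longer satisfy $\Y_{t_j}=\X_{t_j}$, so $Z_t$ loses its purpose --- and the paper instead controls the spline's velocity and curvature directly through the tridiagonal system for the second-derivative coefficients, with an entrywise decay bound on its inverse (Lemma~\ref{lem:T_inv}) supplying the $\alpha^{-3}$. Finally, the Monge-map Taylor estimate you invoke is proved in the paper (Theorem~\ref{thm:transport_approx}, Corollary~\ref{cor:transport_approx}) not by expanding the explicit formula~\eqref{eq:gaussian_ot_map}, which would be rather messy, but by a test-function argument using the pushforward identity $\E\,\varphi(T\X_t) = \E\,\varphi(\X_{t+h})$ with the quadratic $\varphi(x) = \langle x, Ax\rangle$.
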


The proof is given in Appendix~\ref{appendix:approx}.

Some remarks:
\begin{enumerate}
    \item The definition of $L$ in the theorem agrees with the Lipschitz constant of ${(\mus_t)}$ in the metric sense, as can be seen from~\textcite[Theorem 8.3.1]{ambrosio2008gradient}. 
    \item The quantity $\lambda_{\min}^{-1}$ can be interpreted as a bound on the curvature of Bures-Wasserstein space at the interpolation points; see \textcite{massartburescurvature} for details.
    \item The $O(\delta^2)$ rate of convergence is optimal given our assumptions: a bound $R$ on the second covariant derivative of the curve ${(\mus_t)}$. Indeed, this matches classical approximation results for cubic splines on Euclidean space~\parencite{Birkhoffdeboorsplines}. We remark that under these assumptions, piecewise geodesic interpolation, where trajectories are piecewise linear and not differentiable, also achieves the $O(\delta^2)$ rate, and we give the proof of this in Appendix~\ref{appendix:piecewise_geodesic}. Of course, despite achieving the optimal rate in this class of curves, such interpolation is unsuitable for many applications (especially ones in which interpretation and visualization are a priority; see Figure~\ref{fig:sawtooth}).
    \item We did not attempt to optimize the constant factor in Theorem~\ref{thm:approx_thm} and it appears that it can, in fact, be improved;  c.f.\ Remark~\ref{rmk:sharp}  
    \item Cubic splines achieve higher-order approximation rates in the Euclidean setting, albeit over a restricted class of curves. For approximation of functions $f \in \mc C^k, k \le 4$, cubic splines enjoy a $O(\delta^k)$ approximation rate with explicit dependence on $\lVert f^{(k)}\Vert_{\sup}$. It is then natural to ask whether it is possible to obtain rates better than $O(\delta^2)$ through a variant of transport splines. This can indeed be done by using more accurate approximations to the velocity vector fields $(v_t)$; this study will be reported in a forthcoming work.
\end{enumerate}

\section{\allcaps{Thin-Plate Splines}}\label{scn:thin_plate}


To demonstrate the flexibility of our method, we use transport splines to define a class of smooth interpolating surfaces on Wasserstein space. We first recall classical thin-plate splines. For a more complete account see~\textcite{wahbas1990plines}.

Thin-plate splines are the surface analog of cubic splines, and are useful in spatial problems where measurements are taken on a plane. Here, the times $t_i$ are replaced with points $x_i \in \mathbb{R}^2$ at which we observe real values $z_i$. To account for this additional dimension the energy functional $\int_0^1 \norm{\ddot\gamma_t}^2 \, \D t$ that appears in the variational definition~\eqref{eq:cubic_spline} of cubic splines is replaced by its bivariate counterpart. Thin-plate splines are defined as parametrized surfaces $f$ that solve
\begin{equation}\label{eq:tps}
    \inf_f \int_{\R^2} \lVert \nabla^2 f \rVert_{\rm F}^2 ~\text{ s.t.}~ \begin{cases} f : \R^2\to\R \\ f(x_i) = z_i, \; i=0,\dotsc,N \end{cases}
\end{equation}
where $\nabla^2 f$ is the Hessian of $f$, $\lVert \cdot \rVert_{\rm F}$ denotes the Frobenius norm, and the interpolation data $(x_i, z_i) \in \R^2 \times \R$ is given. (Just as before, $f$ is constrained to be $\mathcal{C}^2$.) It can be shown that (\ref{eq:tps}) has a unique solution given by
\begin{equation*}\label{eq:tps_formula}
    f(x) = c_0 + c_1 x^{(1)} + c_2 x^{(2)} + \sum_{i=0}^N \alpha_i \phi( \lVert x - x_i \rVert)
\end{equation*}
where we use $x^{(i)}$ to denote coordinates, and
\begin{equation*}
    \phi(r) = r^2 \log r.
\end{equation*}
This leads to a closed form for the coefficients as follows. Let $K = {(\phi(\lVert x_i - x_j\rVert))}_{i,j=0}^N$ be the ``kernel matrix'' of the data, and define $P \in \R^{(N+1)\times 3}$ to have $i$th row $(1, x^{(1)}_i, x^{(2)}_i)$.\footnote{The function $\phi$ plays the role of a kernel for the reproducing kernel Hilbert space of twice-differentiable, finite-curvature surfaces, but it is {\it not} a kernel because it is not positive definite.} Then let $L \in \R^{(N+4)\times (N+4)}$ be
\begin{equation*}
    L = \begin{bmatrix}
  K  & P \\
  P^\top  &
  0_{3\times 3}
\end{bmatrix}.
\end{equation*}
Letting $b = (z_0, \ldots, z_N, 0, 0, 0)$ be the padded data and $w = (\alpha_0, \ldots, \alpha_N, c_0, c_1, c_2)$ the coefficients from~\eqref{eq:tps_formula}, these solve $    Lw = b$.
This can be inverted explicitly using the Schur complement, and in particular the resulting coefficients are linear in the data ${(z_i)}_{i=0}^N$.

We now consider the measure-valued analog of the interpolation problem, namely, at each point $x_i$ we observe a measure $\mus_{x_i}$ and our goal is to find a smooth interpolating surface $x\mapsto \mu_x$ of measures.

As in the definition of E-splines,~\eqref{eq:tps} can be generalized to Wasserstein space, but it is intractable for the same reasons. In contrast, applying Algorithm~\ref{alg:interpolate} is straightforward. Step 2 simply requires the fitting of a Euclidean thin-plate spline. For Step 1 we need only produce couplings between the observed measures $\mus_{x_i}$.

One possiblity is to mimic the sequential coupling technique described in Section~\ref{scn:transport_spline_alg}, namely we fix the ordering $x_0,x_1,\dotsc,x_N$ and use the system of Monge maps $T_{i-1,i}$ taking $\mus_{x_{i-1}}$ to $\mus_{x_i}$. As before, we can draw $\Y_{x_0} \sim \mus_{x_0}$ and then successively compute the random variables $\Y_{x_i} = T_{i-1,i}(\Y_{x_{i-1}}) \sim \mus_{x_i}$ for all $i$. Sequential coupling is unsuitable here, however, because it distorts the geometry of the plane. To circumvent this issue, we next turn towards the special case when the measures $\mu_{x_i^\star}$ are defined over $\R$, which is already interesting enough to capture a breadth of applications.

The study of $\mc P_2(\R)$ is greatly simplified by the fact that it is isometric to a convex subset of a Hilbert space and is therefore \emph{flat}.
Indeed, the special structure of $\mathcal{P}_2(\R)$ has already been used fruitfully in many prior applications of optimal transport, such as curve registration~\parencite{panaretos2016}, geodesic principal components~\parencite{geodesicpcaw2}, estimation of barycenters~\parencite{bigotbarycenters1d}, and uncoupled isotonic regression~\parencite{rigolletweedisotonic}.

For our purposes, we will use the following key property of $\mc P_2(\R)$: there is a unique coupling of all of the measures $\mus_{x_0}, \mus_{x_1},\dotsc,\mus_{x_N}$ which is \emph{simultaneously} optimal for every pair of measures. In other words, there exist random variables $\Y_{x_0},\Y_{x_1},\dotsc,\Y_{x_N}$ such that for any $i, j =0,1,\dotsc,N$, we have $\Y_{x_j} = T_{i,j}(\Y_{x_i})$, where $T_{i,j}$ is the Monge map from $\mus_{x_i}$ to $\mus_{x_j}$. Sampling from this coupling can be done using either of the of the following equivalent procedures:
\begin{enumerate}
    \item Draw $\Y_{x_0} \sim \mus_{x_0}$, and for each $i\in [N]$ let $\Y_{x_i} = T_{0,i}(\Y_{x_0})$ (the choice of $x_0$ does not affect the coupling).
    \item Draw a uniform random variable $U$ on $[0,1]$, and for $i=0,1,\dotsc,N$ set $\Y_{x_i} = F_{\mus_{x_i}}^{-1}(U)$, where $F_\mu$ denotes the CDF of $\mu$.
\end{enumerate}
See Appendix~\ref{appendix:1d_coupling} or~\textcite[\S 2.1-2.2]{santambrogio2015ot}.

In Figure~\ref{fig:caltemp} we display an application of thin-plate transport splines to temperature data. In the left-hand column we plot the quantiles of the interpolated measures. This is especially convenient when all of the measures are Gaussian, in which case there is a simple and efficient algorithm for computing these quantiles (see Appendix~\ref{appendix:quantile}). The details for the experiment are given in Appendix~\ref{appendix:caltemp}.

\begin{figure}
\centering
\hspace*{-13.5cm}
\includegraphics[clip, width=1.75\textwidth]{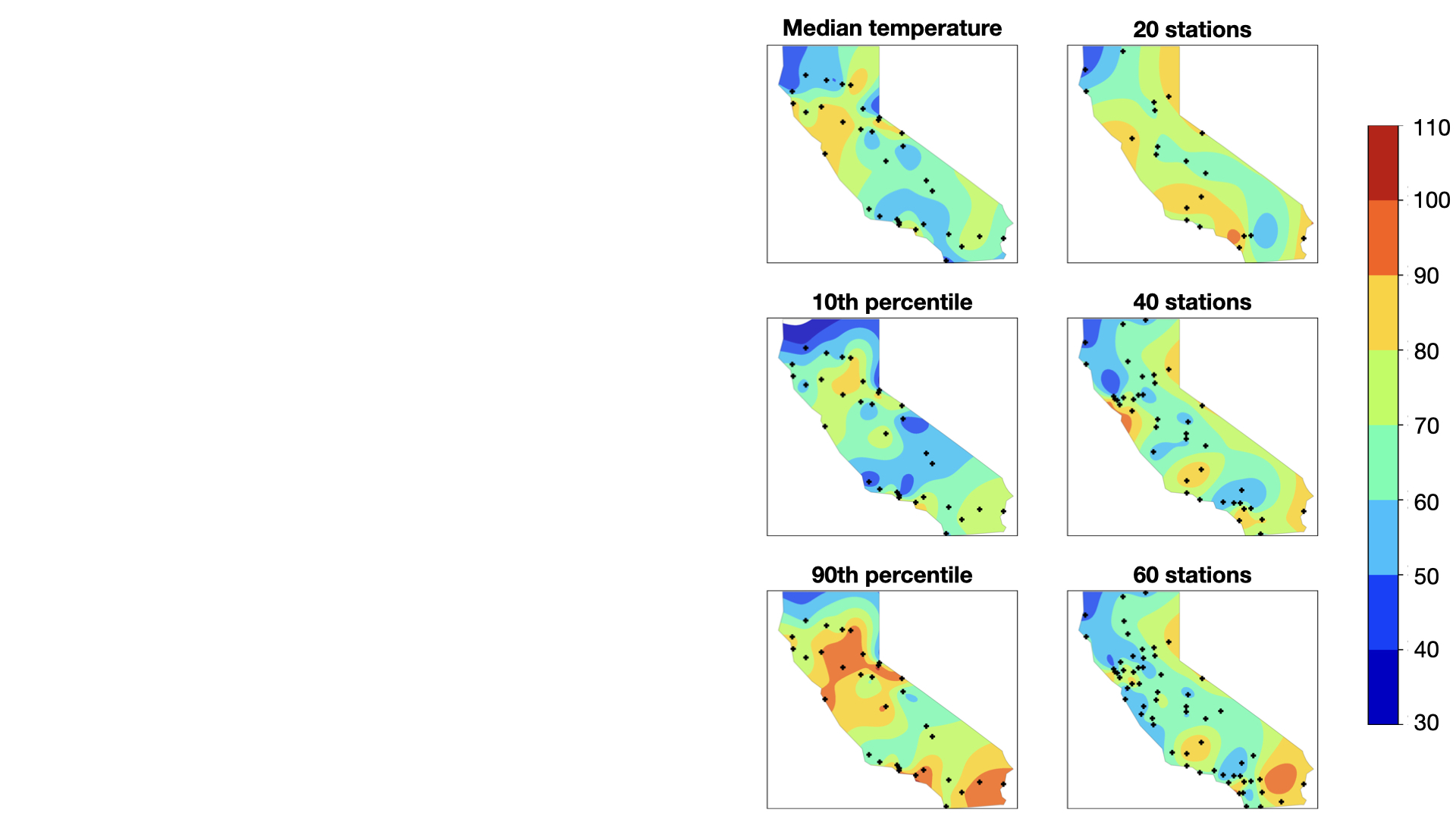}
\vspace{-0.25em}
\caption{\small Thin-plate splines for California temperature data (in ${}^\circ \text F$); in the left column are the quantiles, while in the right are the means of the interpolated measures for an increasing sample of observations. See Appendix~\ref{appendix:caltemp}.}
\label{fig:caltemp}
\end{figure}

We conclude this section with a few remarks about the case of higher-dimensional measures, in which case there is no simultaneous optimal coupling of the measures. If we wish to use Monge map couplings as in Algorithm~\ref{alg:interpolate}, one possibility is to first construct a tree graph whose vertices are the data $\mus_{x_i}$, and use Monge map couplings along the edges of the tree. Here, the tree should be chosen to adequately capture the two-dimensional geometry of the spatial covariates. This consideration becomes especially relevant when the spatial covariates are sampled from a manifold, and it is of interest to combine our methodology with existing results on approximation of manifolds via graphs~\parencite{singer2006graphlaplacian}.


\section{\allcaps{Open Questions}}

We conclude by discussing some interesting directions left open in this work. A natural question is to develop a computationally tractable notion of \emph{smoothing} splines, and to investigate its statistical properties in the context of Wasserstein regression where the $\mus_{t_i}$ are observed with noise. As a second question, we remark that an approximation guarantee such as Theorem~\ref{thm:approx_thm} can be compared with quantitative stability results for Monge maps~\parencite{gigli2011holder, HutRig19} and extending such results to general Wasserstein space will likely require new techniques.

\bigskip{}
\textbf{Acknowledgments}.
Sinho Chewi and Austin J.\ Stromme were supported by the Department of
Defense (DoD) through the National Defense Science \& Engineering Graduate Fellowship (NDSEG)
Program.
Julien Clancy and George Stepaniants were supported by the NSF GRFP. This material is based upon work supported by the National Science Foundation Graduate Research Fellowship under Grant No.\ 1745302.
Thibaut Le Gouic was supported by ONR grant N00014-17-1-2147 and NSF IIS-1838071.
Philippe Rigollet was supported by NSF awards IIS-1838071, DMS-1712596, DMS-T1740751, and DMS-2022448.







\appendix

\section{\allcaps{Details for the P-Spline Example}}\label{appendix:example}

In this section we consider the following spline problem:
for $N > 1$ and
times $t_i = i/N$, $i = 0, \ldots, N$,
suppose we observe
\begin{equation}\label{eqn:indep_interp_gaussian}
\mus_{t_i} := \Normal\bigl(0, (1 - t_i)^2 + t_i^2\bigr), \quad i = 0, \ldots, N.
\end{equation}
This is the data for which we make the claims in Propositions~\ref{prop:non_Monge_p_spline}
and~\ref{prop:e_spline_p_spline}.


\subsection{Proposition~\ref{prop:non_Monge_p_spline}}\label{appendix:non_Monge_p_spline}

We begin by remarking that in general, there is no reason to expect that solutions of the P-spline problem~\eqref{eq:p_spline} are deterministic.
Indeed, consider the following.

\begin{prop}
Let $\mus_0$ and $\mus_1$ be any probability measures. Then, \emph{any} coupling $(\Y_0, \Y_1)$ of the two measures induces an optimal P-spline solution $(\Y_t)$ to~\eqref{eq:p_spline} with data $\mus_0$ and $\mus_1$.
\end{prop}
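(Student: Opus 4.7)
The plan is essentially trivial, since with only two data points the P-spline problem has zero as its optimal cost. Given any coupling $(Y_0, Y_1)$ of the two measures $\mu^\star_0$ and $\mu^\star_1$, I would define the stochastic process by linear interpolation between the endpoints:
\begin{equation*}
    Y_t := (1-t) Y_0 + t Y_1, \qquad t \in [0,1].
\end{equation*}
By construction the marginals at $t = 0$ and $t = 1$ agree with $\mu^\star_0$ and $\mu^\star_1$ respectively, and the sample paths are smooth (in particular $\mathcal{C}^2$), so $(Y_t)$ is a feasible candidate for the P-spline variational problem~\eqref{eq:p_spline}.

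Next I would compute the objective: since $\ddot Y_t \equiv 0$ pointwise in $\omega$, we have
\begin{equation*}
    \int_0^1 \mathbb{E}[\|\ddot Y_t\|^2] \, \D t = 0.
\end{equation*}
The cost functional in~\eqref{eq:p_spline} is manifestly nonnegative, hence this value is a global minimum, and $(Y_t)$ is optimal regardless of which coupling $(Y_0, Y_1)$ we started from.

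There is no real obstacle here; the content of the statement is conceptual rather than technical. The point is that with only two interpolation times there are no interior constraints forcing curvature, so every coupling is trivially optimal. This sharply contrasts with the case $N \ge 2$ treated in Proposition~\ref{prop:non_Monge_p_spline}, where intermediate marginal constraints prevent the acceleration from vanishing and select a specific (potentially non-Monge) optimal coupling. Thus the proposition serves mainly to motivate why non-uniqueness (and non-Monge optimizers) in the P-spline problem should be expected in general.
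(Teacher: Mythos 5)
Your proof is correct and follows exactly the same approach as the paper: define $Y_t := (1-t)Y_0 + tY_1$, observe that $\ddot Y_t \equiv 0$ so the P-spline cost is zero, hence the process is optimal since the objective is nonnegative. The paper's version is even more terse, but the argument is identical.
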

\begin{proof}
    Indeed, simply set $\Y_t := (1-t) \Y_0 + t\Y_1$. Since $t\mapsto \Y_t$ is a line traversed at constant speed, it incurs zero P-spline cost and is therefore optimal for~\eqref{eq:p_spline}. \placeqed
\end{proof}

As this example shows, the P-spline problem with two measures is quite degenerate; in particular, it does not recover the $W_2$ geodesic joining $\mu_0$ to $\mu_1$, and $\X_1$ is not guaranteed to be a deterministic function of $\X_0$. A slight modification of this simple example yields:

\begin{prop}
Let $\mus_0$ be any absolutely continuous measure. Then, there exist absolutely continuous data ${(\mus_{i/N})}_{i=1}^N$ and an optimal solution $(\Y_t)$ to the P-spline problem~\eqref{eq:p_spline} for ${(\mus_{i/N})}_{i=0}^N$ such that $\Y_1$ is not a deterministic function of $\Y_0$.
\end{prop}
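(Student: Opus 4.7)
The plan is to lift the two-measure degeneracy of the preceding proposition to the $N$-measure setting by pinning all intermediate observations to the marginals of a single straight-line, constant-velocity trajectory. Concretely, I would fix any absolutely continuous measure $\nu$ on $\R^d$ (taking $\nu = \mus_0$ is fine), let $\Y_0 \sim \mus_0$ and $\Y_1 \sim \nu$ be drawn \emph{independently}, and define
\begin{equation*}
    \Y_t := (1-t)\,\Y_0 + t\,\Y_1, \qquad t \in [0,1].
\end{equation*}
I then \emph{declare} $\mus_{i/N}$ to be the law of $\Y_{i/N}$ for $i = 1,\dotsc,N$ (so that $\mus_1 = \nu$). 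This guarantees interpolation by construction, and the sample paths are affine, hence $\mc C^\infty$.

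The verification then reduces to three points. First, absolute continuity of the data: for $1 \le i \le N-1$, $\mus_{i/N}$ is the law of a nontrivial convex combination of two independent AC random variables, so its density is the convolution of the rescaled densities of $(1-t_i)\Y_0$ and $t_i\Y_1$, which is AC; the endpoint $\mus_1 = \nu$ is AC by choice. Second, optimality of $(\Y_t)$ for~\eqref{eq:p_spline}: since $t \mapsto \Y_t$ is affine pathwise, $\ddot\Y_t \equiv 0$ and the cost $\int_0^1 \E[\lVert \ddot\Y_t\rVert^2] \, \D t$ vanishes; as the cost functional is nonnegative, this is the global minimum. Third, non-determinism: by construction $\Y_1$ is independent of $\Y_0$ and not a.s.\ constant (since $\nu$ is AC, in particular non-atomic), so $\Y_1$ cannot be a deterministic function of $\Y_0$.

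I do not anticipate a significant obstacle, since the statement really is a cosmetic strengthening of the previous proposition: once one realizes that the intermediate marginals $\mus_{i/N}$ are themselves free parameters of the construction, the natural choice is to take them along an already-optimal (zero-cost) affine curve joining $\mus_0$ to an independent AC target. The only minor care required is ensuring the intermediate marginals remain absolutely continuous, which is handled automatically by the independence of $\Y_0$ and $\Y_1$ via the convolution observation above.
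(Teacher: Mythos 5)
Your proof is correct and reaches the same conclusion, but it takes a genuinely cleaner route than the paper's. The paper's construction fixes two $\mus_0$-a.e.\ distinct maps $T, \bar T$, flips an independent fair coin, and sets $\Y_t = (1-t)\Y_0 + t\,T(\Y_0)$ or $\Y_t = (1-t)\Y_0 + t\,\bar T(\Y_0)$ accordingly; non-determinism then comes from the coin, and absolute continuity of the declared marginals $\mus_{i/N}$ (which are $1/2$-$1/2$ mixtures of two pushforwards of $\mus_0$) has to be arranged by hand, e.g.\ by taking $T, \bar T$ to be gradients of uniformly convex functions with a citation to Villani. You instead draw $\Y_1$ fully independent of $\Y_0$, which makes non-determinism immediate (an independent, non-constant random variable cannot be a.s.\ a function of $\Y_0$) and, more importantly, makes absolute continuity of the intermediate marginals automatic: for $0 < t_i < 1$, $\mus_{i/N}$ is the law of a sum of two independent absolutely continuous variables, hence its density is a convolution and no auxiliary convexity argument is needed. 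Both constructions share the essential idea — declare the intermediate marginals to be pushforwards along a pathwise-affine, hence zero-cost, process — but yours eliminates a moving part and an external reference, so it is the tighter argument. One stylistic note: the paper's two-map construction is a discrete mixture of Monge couplings, which makes its failure mode look like a slight perturbation of the deterministic case; your independence construction is ``maximally'' non-deterministic, which arguably makes the pathology more vivid.
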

\begin{proof}
    Indeed, let $T, \bar T : \R^d\to\R^d$ be two mappings which are $\mus_0$-a.e.\ distinct, i.e., $T \ne \bar T$.
    Draw $\Y_0 \sim \mus_0$.
    Then, we either set $\Y_t = (1-t) \Y_0 + tT(\Y_0)$ or else $\Y_t = (1-t)\Y_0 + t\bar T(\Y_0)$ with probability $1/2$ each (with the choice being made independently of the draw of $\Y_0$).
    Set $\mus_{i/N} := \on{law}(\Y_{i/N})$.
    
    By construction, the marginals of the process $(\Y_t)$ at times $0,1/N,\dotsc,1$ do indeed interpolate the data.
    Also, since $t\mapsto \Y_t$ is a straight line traversed at constant speed, then $(\Y_t)$ incurs zero P-spline cost and is optimal for~\eqref{eq:p_spline}.
    
    Since $T$ and $\bar T$ are distinct, $\Y_1$ is not a deterministic function of $\Y_0$. Also, the mappings $T$ and $\bar T$ can easily be chosen to make the data all absolutely continuous (e.g., by taking them to be gradients of uniformly convex functions; c.f.\ the proof of~\textcite[Proposition 5.9]{villani2003topics}). \placeqed
\end{proof}

(Compare this with Proposition 7 and the subsequent remark in~\textcite{benamouSecondOrderModels2018}.)

We next turn towards the Gaussian case. As detailed in~\textcite{chenMeasurevaluedSplineCurves2018, benamouSecondOrderModels2018}, the P-spline problem~\eqref{eq:p_spline} can be reduced to a multimarginal optimal transport problem involving the measures $\mus_{t_0},\mus_{t_1},\dotsc,\mus_{t_N}$,
\begin{align}\label{eq:multimarginal}
    \inf_{\pi \in \Pi(\mus_{t_0},\mus_{t_1},\dotsc,\mus_{t_N})} \int c \, \D \pi,
\end{align}
where $c$ is a quadratic cost function. The reduction is in the following sense: if $\pi$ is an optimal solution for~\eqref{eq:multimarginal}, then let $(\Y_{t_0}, \Y_{t_1},\dotsc,\Y_{t_N}) \sim \pi$, and fit a Euclidean cubic spline $(\Y_t)$ through the points ${(\Y_{t_i})}_{i=0}^N$. Then, the stochastic process $(\Y_t)$ is an optimal solution for~\eqref{eq:p_spline}. Any optimal solution of~\eqref{eq:p_spline} is also of this form, having sample paths that are cubic splines.

Since the cost in the multimarginal problem~\eqref{eq:multimarginal} is quadratic, it depends only on the mean and covariance matrix of the coupling $\pi$. Suppose now that the data ${(\mus_{t_i})}_{i=0}^N$ is Gaussian, and suppose we are given any optimal coupling $\pi$ for~\eqref{eq:multimarginal}. Then, we can find a \emph{jointly Gaussian} coupling $\bar\pi$ of the data which has the same mean and covariance structure as $\pi$, which means $\bar\pi$ is also optimal for~\eqref{eq:p_spline}. The coupling $\bar\pi$ then induces a Gaussian process $(\bar\Y_t)$ which is optimal for~\eqref{eq:p_spline}. Such a solution has the appealing property that the law $\mu_t$ of $\bar\Y_t$ is also Gaussian for every time $t$.

From this discussion, it is natural to restrict ourselves to solutions to~\eqref{eq:p_spline} which are Gaussian processes. We call such a solution a \emph{Gaussian solution} to the P-spline problem~\eqref{eq:p_spline}.
We now state a counterexample which proves Proposition~\ref{prop:non_Monge_p_spline}.

\begin{prop} \label{prop:non_Monge_p_spline_formal}
Assume $N > 1$. For $i=0,1, \ldots, N$, let $\mus_{t_i}=\Normal(0, (1-t_i)^2 + t_i^2)$. Then there is a unique Gaussian solution to the P-spline problem~\eqref{eq:p_spline} and it is \emph{not} induced by a deterministic map.
\end{prop}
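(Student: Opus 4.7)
The plan is to exhibit an explicit zero-cost feasible Gaussian process and then show it is the unique Gaussian optimum.

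First I would construct the candidate solution $\bar Y_t := (1-t)\,A + t\,B$, where $A,B \sim \Normal(0,1)$ are independent. This is a jointly Gaussian process with linear (hence $\mc C^\infty$) sample paths, so $\ddot{\bar Y}_t \equiv 0$ almost surely, and the $t_i$-marginal is $\Normal(0,(1-t_i)^2+t_i^2) = \mus_{t_i}$. Thus $(\bar Y_t)$ is feasible for the P-spline problem~\eqref{eq:p_spline} and attains cost $0$. Since the cost is nonnegative, the optimal value of~\eqref{eq:p_spline} is $0$, and any optimal process $(Y_t)$ satisfies $\ddot Y_t = 0$ a.e.\ on $[0,1]$ a.s., hence has linear sample paths: $Y_t = (1-t)\,Y_0 + t\,Y_1$ almost surely.

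Next I would characterize all Gaussian optima. If $(Y_t)$ is a jointly Gaussian optimizer, then by the preceding step the whole process is determined by the joint law of $(Y_0, Y_1)$, which is centered bivariate Gaussian with unit marginal variances; let $\rho := \on{Cov}(Y_0,Y_1)$. The marginal constraint at any interior time $t_i$, $i = 1,\dotsc, N-1$ (which exists because $N>1$), reads
\begin{equation*}
    (1-t_i)^2 + t_i^2 + 2\,t_i(1-t_i)\,\rho \;=\; (1-t_i)^2 + t_i^2,
\end{equation*}
and since $t_i(1-t_i) > 0$ this forces $\rho = 0$. Hence the Gaussian solution is unique and equals $(\bar Y_t)$.

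Finally, $\bar Y_0$ and $\bar Y_1$ are independent standard Gaussians, so no measurable $T \colon \R \to \R$ can satisfy $\bar Y_1 = T(\bar Y_0)$ a.s.: otherwise $\on{Var}(\bar Y_1 \mid \bar Y_0) = 0$, contradicting $\on{Var}(\bar Y_1 \mid \bar Y_0) = \on{Var}(\bar Y_1) = 1$ by independence. This shows the unique Gaussian solution is not induced by a deterministic map. The main step to handle carefully is ruling out nonlinear $\mc C^2$ sample paths among optima, which is immediate from the zero-cost achievability combined with $\E \int_0^1 \|\ddot Y_t\|^2 \, \D t = 0$; the rest is just tracking the variance constraint.
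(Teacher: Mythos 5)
Your proof is correct and follows essentially the same argument as the paper's: exhibit the zero-cost process $(1-t)A + tB$ with $A,B$ independent standard Gaussians, observe that any optimizer must have linear sample paths, read off $\rho = 0$ from the interior variance constraints (using $N>1$), and conclude non-determinism from the independence of the endpoints.
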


\begin{proof}
The key observation is that the marginals $\mus_{t_i}$ arise
from the curve of measures formed as the law of
$\X_t := (1 - t)\X_0 + t\X_1$ for independent standard Gaussians $\X_0$ and $\X_1$.
If we consider the distribution on paths which is the law
of $(\X_t)$, then it is supported on straight lines traversed at constant speed and so it must be optimal for the $P$-spline problem~\eqref{eq:p_spline}, having zero objective value.

Consider some other stochastic process $(X_t)$
such that the law of
${(X_{t_i})}_{i = 0}^N$ is jointly Gaussian.
For $(X_t)$ to be an optimal solution to the P-spline problem~\eqref{eq:p_spline}, it must also have zero objective value and hence be supported on straight
lines almost surely. Thus, we must have $X_t = (1-t) X_0 + tX_1$. By the marginal constraints we have $\E[X_0^2]=\E[X_1^2]=1$ and so long as $N>1$, for $i = 1, \ldots, N-1$, it holds that $t_i \notin \{0,1\}$ and
\begin{align*}
(1 - t_i)^2 + t_i^2 &= \E\bigl[\big((1-t_i)X_0+t_iX_1\big)^2\bigr]\\
&= (1 - t_i)^2 + t_i^2  + 2t_i\, (1 - t_i) \E[X_0X_1].
\end{align*} Therefore $\E[X_0X_1] = 0$
and 
$(\Y_t)$ has
the same distribution as $(\X_t)$. Consequently,
the unique jointly Gaussian solution to the P-spline
problem is $(\X_t)$. Clearly, the path $(\X_t)$ is not a deterministic function of $\X_0$. Indeed, $\X_1$ is \emph{independent} of $\X_0$. \placeqed
\end{proof}

\begin{rmk}
The uniqueness assertion is false when $N=1$, even when restricting to Gaussian solutions, which again highlights that the P-spline problem between two measures is degenerate.
\end{rmk}

\subsection{Proposition~\ref{prop:e_spline_p_spline}}\label{appendix:e_and_p_different}

In this section we provide the proof of Proposition~\ref{prop:e_spline_p_spline}. Understanding E-splines requires a few technical results, which we first collect before moving on to the proof. We remark that, prior to this work, little was known about E-splines. In particular, it was not known whether the E-spline interpolation of Gaussian measures consists only of Gaussian measures. 

Throughout, it will be convenient to consider the E-spline problem over the  closed convex set of curves taking values in a closed convex set $K$ of a Hilbert space:
    \begin{equation}\label{eq:Cespline}\tag{$\text{E}_K$}
        \min_{\gamma : [0,1] \to K} \int_0^1 \lVert \ddot{\gamma}(t) \rVert^2 \, \D t \quad\text{s.t.}\quad \gamma(t_i) = x_i ~\text{ for all } i
        \end{equation}
Denote by $E[\gamma]=\int_0^1 \lVert \ddot{\gamma}(t) \rVert^2 \, \D t$ the objective function in~\eqref{eq:Cespline}. It follows from the triangle inequality and strict convexity of the function $x \mapsto x^2$  that $E$ is strictly convex on the convex set of admissible curves, so the solution must be unique if it exists. We denote this unique solution by $\gamma_K$.

\begin{prop}\label{prop:e_spline_proj}
    Let $H$ be a Hilbert space, and let $L \subseteq H$ be a closed linear subspace. Take points $x_0, \ldots, x_N \in L$. Then the solution $\gamma_H$ of the E-spline problem (\hyperlink{eq:Cespline}{$\text{E}_H$}) on $H$ satisfies $\gamma_H(t)=\gamma_L(t) \in L$ for all~$t$.
\end{prop}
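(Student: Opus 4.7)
The plan is to exploit the orthogonal decomposition $H = L \oplus L^\perp$ via the orthogonal projection $P_L : H \to L$, using strict convexity of the objective $E$ (already noted in the text preceding the proposition) to conclude uniqueness.

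First I would introduce the candidate $\eta(t) := P_L \gamma_H(t)$ and check that it is admissible. Since $P_L$ is bounded and linear, it commutes with differentiation in whatever function space the minimization lives in, giving $\ddot{\eta}(t) = P_L \ddot{\gamma}_H(t)$ and sufficient regularity. The interpolation constraint is immediate from $x_i \in L$: indeed $\eta(t_i) = P_L \gamma_H(t_i) = P_L x_i = x_i$. Hence $\eta$ is admissible both for (\hyperlink{eq:Cespline}{$\text{E}_L$}) and (since $L \subseteq H$) for (\hyperlink{eq:Cespline}{$\text{E}_H$}).

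Next I would compare energies. The orthogonal decomposition applied pointwise gives
\begin{equation*}
\|\ddot{\gamma}_H(t)\|^2 = \|P_L \ddot{\gamma}_H(t)\|^2 + \|P_{L^\perp}\ddot{\gamma}_H(t)\|^2 \;\geq\; \|\ddot{\eta}(t)\|^2,
\end{equation*}
and integrating over $[0,1]$ yields $E[\eta] \leq E[\gamma_H]$. Because $\gamma_H$ is optimal for (\hyperlink{eq:Cespline}{$\text{E}_H$}), we must have equality, so $\eta$ is also a minimizer of (\hyperlink{eq:Cespline}{$\text{E}_H$}). By the strict convexity of $E$ on the admissible set (noted in the paragraph preceding the proposition), the minimizer is unique, so $\eta = \gamma_H$. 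In particular $\gamma_H(t) = P_L \gamma_H(t) \in L$ for every $t$. Finally, since the admissible set for (\hyperlink{eq:Cespline}{$\text{E}_L$}) is contained in that for (\hyperlink{eq:Cespline}{$\text{E}_H$}) and $\gamma_H$ is $L$-valued, $\gamma_H$ also solves (\hyperlink{eq:Cespline}{$\text{E}_L$}), giving $\gamma_H = \gamma_L$.

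The only non-trivial step is the technical verification that $P_L$ commutes with the second derivative, i.e., that $\ddot{\eta} = P_L \ddot{\gamma}_H$ in the function space on which $E$ is defined (e.g.\ a Hilbert-valued Sobolev space $H^2([0,1]; H)$). This is standard for bounded linear operators, but it is the step that must be handled cleanly before the energy comparison can be invoked.
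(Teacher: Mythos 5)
Your proof is correct and takes essentially the same route as the paper: project $\gamma_H$ onto $L$, use the Pythagorean identity on the second derivative to show the projected curve has no larger energy, and conclude via uniqueness (strict convexity of $E$) that $\gamma_H$ is $L$-valued and equals $\gamma_L$. The only cosmetic difference is that the paper chains $E[\gamma_H] \le E[\gamma_L] \le E[\bar\gamma] \le E[\gamma_H]$ in one pass rather than first establishing $\eta = \gamma_H$ and then separately identifying $\gamma_H$ with $\gamma_L$.
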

\begin{proof}

    Let $P$ be the orthogonal projection onto $L$, and suppose $\gamma$ interpolates the points ${(x_i)}_{i=0}^N$. Then for any admissible curve $\gamma(t) = P \gamma(t) + (I - P) \gamma(t)$, so $\ddot{\gamma}(t) = P \ddot{\gamma}(t) + (I - P)\ddot{\gamma}(t)$ as well. Since these two terms are orthogonal, we have
    \begin{equation*}
        \lVert \ddot{\gamma}(t) \rVert^2 = \lVert P \ddot{\gamma}(t) \rVert^2 + \lVert (I - P) \ddot{\gamma}(t) \rVert^2.
    \end{equation*}
    Thus, on the one hand, if $\bar \gamma (t) = P \gamma_H(t)$ then $E[\bar \gamma] \leq E[\gamma_H]$, and $\bar \gamma$ is interpolating because $x_i \in L$. On the other hand, $E[\gamma_H]\le E[\gamma_L]\le E[\bar \gamma]$ and by uniqueness, $\gamma_H=\gamma_L$. \placeqed
\end{proof}

\begin{prop}\label{prop:K_to_H}
    Let $K$ be a convex subset of a Hilbert space $H$ whose span is closed, and let $x_1, \ldots, x_n \in K$.  If $\gamma_K(t)$     
    lies in the relative interior of $K$ for all times $t$, then $\gamma_K=\gamma_H$.
\end{prop}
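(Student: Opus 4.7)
The plan is to first use Proposition~\ref{prop:e_spline_proj} to confine $\gamma_H$ to $\mathrm{aff}(K)$, and then argue that if $\gamma_H$ differed from $\gamma_K$, a small convex combination of them would stay inside $K$ (by the relative interior hypothesis) while achieving strictly smaller energy than $\gamma_K$, contradicting optimality of $\gamma_K$.

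For the first step, I would exploit that the objective $E[\gamma] = \int \|\ddot\gamma\|^2\,\D t$ and the interpolation constraints are invariant under translating $\gamma$ by a constant. After translating by $-x_1$ we have $0 \in K$, so $\mathrm{aff}(K) = \mathrm{span}(K) =: L$ is a closed linear subspace by hypothesis and contains the (shifted) interpolation points; Proposition~\ref{prop:e_spline_proj} applied with this $L$ then forces $\gamma_H$ to take values in $\mathrm{aff}(K)$.

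For the second step, suppose toward contradiction that $\gamma_H \ne \gamma_K$. Strict convexity of $E$ on the affine set of interpolating curves (noted earlier) gives $E[\gamma_H] < E[\gamma_K]$, and for $s \in [0,1]$ the curve $\gamma_s := (1-s)\gamma_K + s\gamma_H$ still interpolates the data and by convexity satisfies $E[\gamma_s] < E[\gamma_K]$ for every $s \in (0,1]$. To contradict optimality of $\gamma_K$ among interpolating curves in $K$, it suffices to show that $\gamma_s$ takes values in $K$ for some such $s$. Since both $\gamma_K$ and $\gamma_H$ take values in $\mathrm{aff}(K)$, so does $\gamma_s$; and the compact image $\gamma_K([0,1])$ lies inside the relatively open set $\mathrm{ri}(K) \subseteq \mathrm{aff}(K)$, so by a standard compactness argument there exists $\epsilon > 0$ such that every $y \in \mathrm{aff}(K)$ within distance $\epsilon$ of $\gamma_K([0,1])$ belongs to $K$. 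Since $\sup_t \|\gamma_H(t) - \gamma_K(t)\|$ is finite by continuity on the compact interval $[0,1]$, any sufficiently small $s > 0$ will force $\gamma_s(t) \in K$ for all $t$ simultaneously. I expect the main obstacle to be precisely this uniform-in-$t$ step, which is why the compactness of $\gamma_K([0,1])$ inside the relatively open set $\mathrm{ri}(K)$ is the critical ingredient.
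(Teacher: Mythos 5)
Your proof is correct and follows essentially the same strategy as the paper: reduce via Proposition~\ref{prop:e_spline_proj} so that $\gamma_H$ lives in the affine span of $K$, then use the relative-interior hypothesis together with compactness of $\gamma_K([0,1])$ to perturb $\gamma_K$ inside $K$ without increasing the energy. The only cosmetic difference is that the paper phrases the second step as showing $\gamma_K$ is stationary for $E$ on the ambient space (testing against all admissible perturbations $f$), whereas you perturb directly in the single direction $\gamma_H - \gamma_K$ and obtain a contradiction; unpacking the paper's stationarity-plus-convexity step yields precisely your argument.
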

\begin{proof}
    Let $L$ be the linear span of $K$, which is closed. In light of Proposition~\ref{prop:e_spline_proj}, it suffices to prove that $\gamma_K=\gamma_L$ so replacing $H$ by $L$ we may assume that $K$ is of full dimension.
    
    Let $f \colon [0,1] \to H$ be a twice differentiable perturbation such that $f(t_i) = 0$ for all $i$. Hence, $\gamma_K + \varepsilon f$ is admissible for (\hyperlink{eq:Cespline}{$\text{E}_H$}). Since $\gamma_K$ lies in the interior of $K$ and $K$ is full-dimensional, a standard compactness argument shows that for any such $f$ there exists an $\varepsilon > 0$ with $\gamma_K(t) + \varepsilon f(t) \in K$ for all $t$. By optimality of $\gamma_K$ we then have $E[\gamma_K + \varepsilon f] \ge E[\gamma_K]$. Thus $\gamma_K$ is stationary for $E$ considered on $H$, and because $E$ is strictly convex it follows that $\gamma_K$ is optimal for (\hyperlink{eq:Cespline}{$\text{E}_H$}) and is therefore equal to $\gamma_H$ by uniqueness. \placeqed
\end{proof}

\begin{prop}\label{prop:GW_is_BW}
    Let $\mus_{t_0}, \mus_{t_1},\dotsc,\mus_{t_N}$ be Gaussian measures on $\R$.
    Consider the Gaussian version of the E-spline problem on $\R$:
    \begin{equation*}\label{eq:bwespline}
        \min_{(\gamma_t)} \int_0^1 \left \lVert \nabla_{v_t} v_t \right \rVert^2_{L^2(\gamma_t)} \, \D t \quad\text{s.t.}\quad\gamma_{t_i} = \mus_{t_i}, \, i = 1, \ldots, N
    \end{equation*}
    where the minimization is taken over curves $(\gamma_t)$ of Gaussian measures with their corresponding tangent vectors $(v_t)$ (as described in Section~\ref{scn:ot}).
    That is, it is the Wasserstein E-spline problem~\eqref{eq:e_spline} in $\mathcal{P}_2(\R)$ with the added constraint that the measures are Gaussian. If there is an optimal solution $(\gamma_t^\star)$ which is a non-degenerate Gaussian for all time, then it is also the solution to the E-spline problem~\eqref{eq:e_spline}.
\end{prop}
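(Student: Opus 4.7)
The strategy is to use the isometric embedding $\iota:\mathcal{P}_2(\R)\hookrightarrow L^2[0,1]$, $\mu\mapsto F_\mu^{-1}$, under which $\mathcal{P}_2(\R)$ identifies with the closed convex cone $K$ of right-continuous non-decreasing elements of $H:=L^2[0,1]$ (see e.g.\ \textcite[\S 2.1--2.2]{santambrogio2015ot}). The key observation is that because this embedding is globally isometric and its image is convex, the space is flat: for a smooth curve $(\mu_t)$ with tangent field $(v_t)$, its quantile lift $G_t := \iota(\mu_t)$ satisfies $\partial_t G_t = v_t\circ G_t$ and, after differentiating once more, $\partial_t^2 G_t = (\nabla_{v_t}v_t)\circ G_t$. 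Hence the Wasserstein E-spline energy coincides with the Hilbert energy,
\begin{equation*}
\int_0^1 \|\nabla_{v_t}v_t\|_{L^2(\mu_t)}^2\,\D t = \int_0^1\|\ddot G_t\|_H^2\,\D t,
\end{equation*}
so the problem~\eqref{eq:e_spline} translates to the Hilbert E-spline problem over $K$ with data $G_i:=\iota(\mus_{t_i})$.

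Now identify the Gaussian slice of this picture. The closed set of Gaussians maps under $\iota$ to the half-plane $K_G:=\{m+\sigma\Phi^{-1}:m\in\R,\,\sigma\ge 0\}$ sitting inside the 2-dimensional closed linear subspace $L:=\operatorname{span}\{1,\Phi^{-1}\}\subset H$, where $\Phi^{-1}$ is the standard normal quantile function (both $1$ and $\Phi^{-1}$ lie in $L^2[0,1]$ since the standard Gaussian has finite second moment). The relative interior of $K_G$ in $L$ is exactly $\{\sigma>0\}$, i.e., the non-degenerate Gaussians. Writing $G^\star_t:=\iota(\gamma^\star_t)\in K_G$, the hypothesis of the proposition gives $G^\star_t\in\operatorname{relint}(K_G)$ for every $t\in[0,1]$, and the given Gaussian E-spline is precisely the minimizer of the Hilbert E-spline energy over curves in $K_G$ interpolating the $G_i$.

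Chain together the two previously established propositions. First apply Proposition~\ref{prop:K_to_H} inside the Hilbert space $L$ to the convex subset $K_G$: its span is all of $L$ (closed, being finite-dimensional), and $G^\star$ stays in its relative interior, whence $G^\star=\gamma_L$, the unconstrained E-spline solution in $L$. Second apply Proposition~\ref{prop:e_spline_proj} to the closed subspace $L\subset H$: all data points $G_i$ lie in $L$, so $\gamma_L=\gamma_H$, the unconstrained E-spline in all of $H$. Combining gives $G^\star=\gamma_H$. But $G^\star_t$ is a Gaussian quantile function, hence $G^\star_t\in K$ for all $t$, so $\gamma_H$ is feasible for the $K$-constrained E-spline problem. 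Since $\gamma_H$ already minimizes the strictly convex energy over the larger set $H\supseteq K$, it is a fortiori the unique minimizer over $K$. Pulling back through $\iota^{-1}$, $(\gamma^\star_t)$ is the unique solution of the full E-spline problem~\eqref{eq:e_spline}.

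The main technical point is the equivalence used in the first paragraph between the Wasserstein covariant derivative along $(\mu_t)$ and the ambient $L^2[0,1]$ second derivative of its quantile lift; this is a standard consequence of the flatness of $\mathcal{P}_2(\R)$ under the quantile embedding. Once that correspondence is in hand, the proof is essentially bookkeeping: the non-degeneracy assumption is exactly what is needed to invoke Proposition~\ref{prop:K_to_H}, the Gaussianity of the data is exactly what is needed to invoke Proposition~\ref{prop:e_spline_proj}, and the unconstrained Hilbert minimizer $\gamma_H$ happens to land back in $K$, closing the loop.
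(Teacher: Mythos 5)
Your proof is correct and follows essentially the same path as the paper's: pass to the quantile embedding $\mathcal{P}_2(\R)\hookrightarrow L^2[0,1]$, use the isometry argument (Appendix~\ref{appendix:splines_isometry}) to convert the Wasserstein E-spline energy into the Hilbert one, invoke Proposition~\ref{prop:K_to_H} (with Proposition~\ref{prop:e_spline_proj}) to promote the Gaussian-constrained minimizer to the unconstrained $L^2[0,1]$ minimizer, and then note that this already lies in the image of $\mathcal{P}_2(\R)$. Your explicit identification of the Gaussian slice as the half-plane $\{m+\sigma\Phi^{-1}:\sigma\ge 0\}$ in $\operatorname{span}\{1,\Phi^{-1}\}$ is a clean way of supplying the finite-dimensional, closed-span structure that the paper's statement only gestures at (and in fact handles general, not just mean-zero, Gaussians more carefully than the paper's own wording).
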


\begin{proof}
    It is known that $\mathcal{P}_2(\mathbb{R})$ is isometric to a closed convex subset $S$ of the Hilbert space $H = L^2[0, 1]$ \parencite[see the discussion following][Lemma 9.1.4]{ambrosio2008gradient}. This isometry is given by $\mu \mapsto F_\mu^\dagger$, where $ F_\mu^\dagger$ denotes the quantile function of $\mu$. Let $K$ be  the image of the mean-zero Gaussian measures under this isometry; it is immediate that $K$ is convex, since the Gaussian measures form a geodesically convex set in $\mathcal{P}_2(\R)$, and it has closed span because it is finite-dimensional.  In light of this isometry the E-spline problem~\eqref{eq:e_spline} is equivalent to  (\hyperlink{eq:Cespline}{$\text{E}_S$}) while the Gaussian E-spline problem stated in the proposition is equivalent to (\hyperlink{eq:Cespline}{$\text{E}_K$}) and $\gamma^\star=\gamma_K$ (the preservation of E-splines under isometry is discussed in Appendix~\ref{appendix:splines_isometry}).
    
  Applying Proposition~\ref{prop:K_to_H} to $\gamma^\star=\gamma_K$, we deduce that $\gamma^\star = \gamma_H$. Moreover, $E[\gamma_H] \le E[\gamma_S] \le E[\gamma^\star]$, whence by uniqueness we get that $\gamma^\star = \gamma_S$ as well. \placeqed
\end{proof}

We also require a technical lemma regarding P-splines which remain Gaussian for all times, which follows from considerations of several-variable complex functions.
\begin{lem}\label{lem:pspline_is_jointgaussian}
    Let $(\mu_t)$ be a P-spline with initial and final data $\mu_0$ and $\mu_1$ which are Gaussian, and assume:
    \begin{enumerate}
        \item $\mu_t$ is a Gaussian distribution for all times $t$,
        \item $(\mu_t)$ has zero cost for the P-spline objective.
    \end{enumerate}
    Then $(\mu_t)$ is induced by a jointly Gaussian coupling of $\mu_0$ and $\mu_1$.
\end{lem}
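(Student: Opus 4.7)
The plan is to combine the joint moment generating function of the endpoint coupling with the identity theorem for several-variable holomorphic functions, in line with the lemma's own hint.

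First I would unpack the zero-cost hypothesis. If $(X_t)_{t \in [0,1]}$ denotes the underlying P-spline process with $X_t \sim \mu_t$, then $\int_0^1 \E[\|\ddot X_t\|^2]\,\D t = 0$ forces $\ddot X_t = 0$ almost surely, so almost every sample path is affine: $X_t = (1-t)X_0 + tX_1$ a.s. The entire process is then a deterministic function of the endpoint coupling $(X_0, X_1)$ of $\mu_0, \mu_1$, and it suffices to produce a jointly Gaussian coupling $(Z_0, Z_1)$ of $\mu_0, \mu_1$ satisfying $\on{law}((1-t)Z_0 + tZ_1) = \mu_t$ for every $t$.

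Next I would set up the MGF. Let $M(u,v) := \E[\exp(u^\top X_0 + v^\top X_1)]$. Since $X_0, X_1$ are marginally Gaussian, $\E[\exp(|u^\top X_0|)]$ and $\E[\exp(|v^\top X_1|)]$ are finite for all real $u,v$, so Cauchy–Schwarz and differentiation under the integral show that $M$ extends to an entire function on $\mathbb{C}^d \times \mathbb{C}^d$. Let $G$ be the (entire) MGF of the jointly Gaussian distribution whose mean vector and covariance matrix match those of $(X_0, X_1)$; this is well defined because the coupling's covariance matrix is positive semidefinite. Choosing $u = (1-t)c$, $v = tc$ turns $u^\top X_0 + v^\top X_1$ into $c^\top X_t$, which is scalar Gaussian by hypothesis, so $M(u,v) = G(u,v)$ on the real cone $\{(\alpha c, \beta c) : c \in \R^d,\ \alpha, \beta \geq 0\}$. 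For each fixed $c$, the entire functions $(\alpha,\beta)\mapsto M(\alpha c, \beta c)$ and $(\alpha,\beta)\mapsto G(\alpha c, \beta c)$ agree on the open positive quadrant of $\R^2$, so the identity theorem for holomorphic functions of several complex variables forces them to coincide on all of $\mathbb{C}^2$. Consequently $(c^\top X_0, c^\top X_1)$ is jointly Gaussian in $\R^2$ for every $c$; in dimension one this already identifies $(X_0, X_1)$ itself as jointly Gaussian and the lemma follows.

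The main obstacle is that in dimension $d \geq 2$ this analytic-continuation argument only delivers agreement of $M$ and $G$ on a proper complex subvariety of $\mathbb{C}^{2d}$, which is not enough to force the original coupling to be jointly Gaussian. The way around this is to use the weaker conclusion it does give — namely, that the first two moments of $(X_0, X_1)$ agree with those of a Gaussian — together with the hypothesis that each $\mu_t$ is itself Gaussian, and hence determined by its first two moments. I would therefore set $(Z_0, Z_1) \sim \Normal(\vec\nu, \Sigma)$, with $\vec\nu$ and $\Sigma$ the mean vector and covariance matrix of $(X_0, X_1)$. Then $Y_t := (1-t)Z_0 + tZ_1$ is Gaussian with mean and covariance identical to those of $X_t$, so $Y_t \sim \mu_t$, and $(Z_0, Z_1)$ is the required jointly Gaussian coupling inducing $(\mu_t)$.
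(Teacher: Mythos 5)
Your proposal is correct, and the final argument is in fact cleaner than the paper's. The first half (joint MGF, analytic continuation to an entire function on $\mathbb{C}^2$ via sub-Gaussian tails, identity theorem for several complex variables applied on a positive quadrant) reproduces the paper's proof, which implicitly works in the scalar case $\mu_0,\mu_1 \in \mc P_2(\R)$ and concludes the strictly stronger statement that the P-spline's own endpoint coupling $(X_0, X_1)$ is jointly Gaussian. You correctly observe that this analytic-continuation step does not extend to $d \ge 2$: the scaling argument only yields agreement of the MGFs on the cone $\{(\alpha c, \beta c) : c \in \R^d,\ \alpha,\beta \ge 0\}$, a Lebesgue-null set in $\R^{2d}$ with empty interior, so the identity theorem cannot be invoked in $\mathbb{C}^{2d}$. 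Your patch is the right one, and it actually subsumes the whole complex-analysis detour: since the zero-cost hypothesis forces $X_t = (1-t)X_0 + tX_1$ and each $\mu_t$ is Gaussian by hypothesis (hence determined by its first two moments), simply replacing $(X_0, X_1)$ by the jointly Gaussian vector $(Z_0, Z_1)$ with the same mean and covariance gives $(1-t)Z_0 + tZ_1 \sim \mu_t$ for all $t$, which is exactly the lemma's conclusion. This elementary moment-matching argument works in all dimensions including $d=1$, needs no complex analysis, and proves exactly the (weaker) existence statement the lemma asserts — which is all that the downstream application to Proposition~\ref{prop:e_spline_p_spline_formal} requires. The one thing lost relative to the paper is the stronger conclusion that $(X_0, X_1)$ itself is jointly Gaussian, but the lemma doesn't claim that.
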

\begin{proof}
    Since $(\mu_t)$ has zero cost it must be supported on straight lines, so if we let $X_t \sim \mu_t$ where these are coupled according to the $(\mu_t)$ coupling, then
    \begin{equation}\label{eq:lem_intermediate_gaussian}
        X_t = (1-t)X_0 + t X_1
    \end{equation}

    and by assumption this variable is Gaussian. Let $Z$ be the Gaussian with the same covariance structure as $X$. 
    Scaling \eqref{eq:lem_intermediate_gaussian} by a positive constant, we get, for all $a, b \geq 0$
    \begin{equation*}
        \left\langle (a,b), X \right\rangle \stackrel{\rm d}{=} \left\langle (a,b), Z \right\rangle
    \end{equation*}
    where we mean equality in distribution. This implies
    \begin{equation*}
        \varphi_X(a,b) = \varphi_Z(a,b)
    \end{equation*}
    where $\varphi_Y$ denotes the characteristic function of $Y$  and is defined by $\varphi_Y(z)=\E[ e^{i \langle z, Y \rangle} ]$.
    Now, it is well-known that if $\mathbb{E} e^{m \norm Y} < \infty$ for some $m > 0$ then $\varphi_Y$ continues to a holomorphic function in the strip $\{ z \mid |\text{Im} \, z_i| < m \hspace{0.5em} \forall i \}$ \parencite[Theorem 2.7.1]{lehmannromamo_hypotheses}. In particular, if $Y$ has sub-Gaussian tails, $\varphi_Y$ is entire.
    
    Functions of several complex variables admit an identity theorem, similar to the univariate complex case, which can be found in~\textcite[Remark 1.20]{range_scv}.\footnote{The careful reader will note that the hypothesis of this theorem is much stronger than the single-variable requirement that $f$ and $g$ agree merely on a set with an accumulation point. For several complex variables this is not sufficient; indeed, several-variable holomorphic functions never have isolated zeros.} This is:
    \begin{thm*}[identity theorem]
        Let $f$ and $g$ be holomorphic functions of several complex variables in a domain $\Omega \subseteq \C^d$, and let $z \in \Omega$. A {\it real cube} of radius $r$ about $z$ is defined as
        \begin{equation*}
            \{(z_1 + x_1, \ldots, z_d + x_d) \in \C^d \mid |\Re x_i| < r~\text{for}~i=1,\dotsc,d\}.
        \end{equation*}
        If $f$ and $g$ agree on a real cube of positive radius about $z$, then $f \equiv g$ on all of $\Omega$.
    \end{thm*}
            Now, $X$ has sub-Gaussian tails. Indeed,
    \begin{align*}
        M_X(t) &= \E e^{\langle t, X \rangle} = \E e^{t_1 X_0 + t_2 X_1} \leq \left( \E e^{2t_1X_0} \, \E e^{2t_2X_1} \right)^{1/2} = e^{t_1^2 \var X_0 + t_2^2 \var X_1}
    \end{align*}
    where $M_X$ denotes the moment generating function of $X$. Thus $\varphi_X$ is entire, along with $\varphi_Z$, and it is clear from the above discussion that they agree on the real cube about $z = (1,1)$ with radius $r = 1$. The identity theorem then implies that $\varphi_X \equiv \varphi_Z$, so $X \stackrel{\rm d}{=} Z$. Thus $X$ is jointly Gaussian. \placeqed
\end{proof}

Proposition~\ref{prop:e_spline_p_spline} is implied by the following result.
\begin{prop} \label{prop:e_spline_p_spline_formal}
For $i=0, \ldots, N$, let $\mus_{t_i}=\Normal(0, \sigma_{t_i}^2)$, where $\sigma_t^2=(1-t)^2 + t^2$. Then for all $N \geq 2$, the E-spline~\eqref{eq:e_spline} and P-spline~\eqref{eq:p_spline} interpolations do not coincide.
\end{prop}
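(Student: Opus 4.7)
The plan is to identify the Gaussian P-spline using Proposition~\ref{prop:non_Monge_p_spline_formal} and then to exhibit an explicit Gaussian perturbation of its curve of measures that strictly lowers the E-spline energy. Since the E-spline problem has a unique minimizer (by strict convexity of the quadratic covariant-acceleration cost), this shows that the E-spline curve does not coincide with the (jointly Gaussian) P-spline curve. Combined with Lemma~\ref{lem:pspline_is_jointgaussian}, which forces any P-spline whose marginals are all Gaussian to be the jointly Gaussian one, this suffices to rule out coincidence of the two interpolations.

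Concretely, Proposition~\ref{prop:non_Monge_p_spline_formal} gives that the jointly Gaussian P-spline is $X_t = (1-t)X_0 + tX_1$ with $X_0 \perp X_1 \sim \Normal(0,1)$, whose marginal-distribution curve is $\mu^P_t = \Normal(0, \sigma_t^2)$ for $\sigma_t := \sqrt{(1-t)^2 + t^2}$. The Lagrangian coupling of $(\mu^P_t)$ is $Y_t = \sigma_t Z$ with $Z \sim \Normal(0,1)$, so the covariant acceleration equals $\ddot\sigma_t Z$ and the E-spline cost of $(\mu^P_t)$ reduces to $\int_0^1 \ddot\sigma_t^2 \, dt$; a direct computation yields $\ddot\sigma_t = \sigma_t^{-3}$.

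I next perturb via $\tilde\sigma_t := \sigma_t + \epsilon f(t)$ for $f \in C^2([0,1])$ with $f(t_i) = 0$ at every node. Since $\sigma_t \geq 1/\sqrt{2}$ on $[0,1]$, for small $|\epsilon|$ the curve $\tilde\mu_t := \Normal(0, \tilde\sigma_t^2)$ is an admissible non-degenerate Gaussian curve satisfying all interpolation constraints, with E-spline cost
\begin{equation*}
\int_0^1 \ddot\sigma_t^2 \, dt + 2\epsilon \int_0^1 \ddot\sigma_t \, \ddot f(t) \, dt + O(\epsilon^2).
\end{equation*}
If $f$ is compactly supported inside a single subinterval $(t_{i-1}, t_i)$, integration by parts twice yields $\int_0^1 \ddot\sigma_t \, \ddot f(t) \, dt = \int_0^1 \ddddot\sigma_t \, f(t) \, dt$. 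A direct computation gives $\ddddot\sigma_t = 3\,(2t^2 - 2t + 1)^{-7/2}(4t-1)(4t-3)$, which vanishes only at $t = 1/4$ and $t = 3/4$. Hence for any $N \geq 2$ one can localize a nonnegative bump $f$ inside a sub-interval on which $\ddddot\sigma_t$ is of constant nonzero sign, making the first-variation term nonzero; a suitable sign of $\epsilon$ then yields an admissible Gaussian competitor with strictly smaller E-spline cost than $(\mu^P_t)$, contradicting its E-spline optimality.

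The main obstacle is the verification that $\ddddot\sigma_t \not\equiv 0$ on any subinterval, which boils down to the observation that $\sigma_t = \sqrt{(1-t)^2 + t^2}$ is algebraic but not polynomial on any open set, so it cannot satisfy the cubic-spline Euler–Lagrange equation. The remaining steps are standard variational bookkeeping, made admissible by the uniform lower bound $\sigma_t \geq 1/\sqrt{2}$ which legitimizes small Gaussian perturbations.
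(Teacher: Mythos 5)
Your perturbation argument for the core claim is both correct and elegant: using the closed form $\ddot\sigma_t=\sigma_t^{-3}$ and $\sigma_t^{(4)}=3\,(2t^2-2t+1)^{-7/2}(4t-1)(4t-3)$ (both computations check out), the nonvanishing of the fourth derivative on a subinterval of $(t_0,t_1)$ gives a genuine first-variation argument that $(\Normal(0,\sigma_t^2))_t$ is not an E-spline critical point, hence not the minimizer. This is a cleaner route than the paper's, which instead constructs the E-spline explicitly as $\Normal(0,\gamma(t)^2)$ with $\gamma$ the Euclidean cubic spline through the standard deviations, invokes Hall's error bound to verify $\gamma>0$, and then notes that $\gamma$ is piecewise cubic while $\sqrt{(1-t)^2+t^2}$ is not locally polynomial. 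Your variational argument reaches the same conclusion without needing an explicit description or positivity estimate for the minimizer.

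However, there is a genuine gap in how you close the argument. You handle only the case in which the P-spline has Gaussian marginals (where Lemma~\ref{lem:pspline_is_jointgaussian} and Proposition~\ref{prop:non_Monge_p_spline_formal} pin it down as $\Normal(0,\sigma_t^2)$). But P-spline solutions need not be unique, and a priori there could be a P-spline optimizer $(\mu_t^{\mathrm{P}})$ with some non-Gaussian marginal $\mu_{t^\ast}^{\mathrm{P}}$. To rule out that such a P-spline coincides with the E-spline, one must know that the E-spline marginal $\mu_{t^\ast}^{\mathrm{E}}$ is Gaussian. This is exactly what the paper establishes via Propositions~\ref{prop:e_spline_proj}, \ref{prop:K_to_H}, and~\ref{prop:GW_is_BW}, whose applicability hinges on the nondegeneracy of the cubic spline $\gamma$ -- checked via Hall's bound. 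Your write-up nowhere argues that the E-spline lies in the Gaussian family, so the phrase ``this suffices to rule out coincidence'' overreaches: it rules out coincidence with the jointly Gaussian P-spline but says nothing about a hypothetical non-Gaussian-marginal P-spline optimizer. Adding the Gaussianity of the E-spline (and therefore some positivity check on the interpolated standard deviations) would close the gap and make the proof complete.
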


Before starting the proof, we dispense with a possible source of confusion. The solution to the P-spline problem~\eqref{eq:p_spline} is a stochastic process $(\Y_t)$; on the other hand, the E-spline solution yields a natural stochastic process, namely the Lagrangian coupling $(\X_t)$ (see Section~\ref{scn:ot}). In the proposition, we are not asserting that the process $(\Y_t)$ and $(\X_t)$ are different (indeed this is an easier statement to prove since the P-spline solution is often not even deterministic; see Appendix~\ref{appendix:non_Monge_p_spline}). Instead, we are asserting that the {\it interpolated measures} associated with the E- and P-splines are different, which is strictly stronger statement.

\begin{proof}
First, the manifold of mean-zero Gaussian measures on $\R$ equipped with the $W_2$ metric is isometric to the ray $[0, \infty)$ equipped with the standard Euclidean metric. Indeed, we have
\begin{equation*}
    W_2\bigl(\mathcal{N}(0,\sigma_0^2), \mathcal{N}(0,\sigma_1^2) \bigr) = |\sigma_0 - \sigma_1|.
\end{equation*}
Suppose we have data $\mus_{t_i} = \mathcal{N}(0,\sigma_i^2)$ at times $t_i$ and let $t\mapsto \gamma(t)$ be the Euclidean spline interpolation of ${(t_i, \sigma_i)}_{i=0}^N$ on $\R$. It is possible that $\gamma(t) \leq 0$ at some $t$, but if $\gamma(t) > 0$ for all $t$, then by Proposition~\ref{prop:K_to_H} it must also be the spline considered on the ray $[0,\infty)$. Since covariant derivatives are preserved under isometry (see Appendix~\ref{appendix:splines_isometry} for a formal verification in our setting), the function $E[\cdot]$ is also preserved under isometry, and so its minimizers --- E-splines --- are preserved as well. This means that the Gaussian-constrained E-spline is
\begin{equation*}
    \mu^{\rm E}_t = \mathcal{N}\bigl(0, \gamma(t)^2\bigr), \qquad t \in [0,1],
\end{equation*}
and by Proposition~\ref{prop:GW_is_BW} this must coincide with the Wasserstein E-spline~\eqref{eq:e_spline}. This is all under the hypothesis that $\gamma(t) > 0$.

Now substitute our example, with $\sigma_i^2 = (1-t_i)^2 + t_i^2$. We need to check that $\gamma(t)$ remains strictly positive for all times. From~\textcite[Theorem 5]{hall_spline_error_bounds}, we see that for all $t$
\begin{equation*}
    |\gamma(t) - \sqrt{t^2 + (1-t)^2}| \leq \frac{5}{384} \cdot 24 \sqrt{2} \cdot \frac{1}{N^4}.
\end{equation*}
For $N \geq 2$ this is less than $0.03$. The smallest value of $\sqrt{t^2 + (1-t)^2}$ is $\sqrt{1/2} \approx 0.7071$, so the spline is bounded below by $0.704$ for all times.

Let $(\mu_t^{\rm P})$ be an interpolating P-spline. It is possible that this is not unique, but if $\mu_t^{\rm P}$ is not Gaussian for some $t$ then we are done, since $\mu_t^{\rm E}$ is Gaussian by Proposition~\ref{prop:GW_is_BW}. Applying Lemma~\ref{lem:pspline_is_jointgaussian}, we see that $\mu_t^{\rm P}$ must be induced by a jointly Gaussian coupling of $\mu_0^\star$ and $\mu_1^\star$, so by Proposition~\ref{prop:non_Monge_p_spline_formal} it must be that $\mu_t^{\rm P} = \mathcal{N}(0, (1-t)^2 + t^2)$.

The standard deviation of $\mu^{\rm E}_t$ is $\gamma(t)$ and this is locally a cubic polynomial in $t$. The standard deviation of the P-spline $\mu_t^{\rm P}$, however, is given by $\sqrt{(1-t)^2 + t^2}$, which cannot be locally represented by a polynomial, so they must differ. \placeqed
\end{proof}

From the final steps of our proof, we see that (in the Gaussian case) P-splines and E-splines will most likely differ generically, since their interpolated variances are polynomial splines of different orders.

\subsection{Preservations of Splines under Isometry}\label{appendix:splines_isometry}

In this section, we give a formal\footnote{The word \emph{formal} here, meaning that the argument proceeds by manipulating the \emph{form} of the expressions, is not a synonym for ``rigorous''.} verification of the assertion that the E-spline functional is preserved under the isometry between $\mc P_2(\R)$ and its image in $H = L^2[0,1]$. Formally, this assertion can be viewed as a manifestation of a classical fact from Riemannian geometry: the covariant derivative (associated with the Levi-Civita connection) depends only on the Riemannian metric, and is thus preserved under isometries.\footnote{In fact, this is related to Gauss's famous \emph{Theorema Egregium}, see~\textcite[\S 4.3]{docarmo2016diffgeo} and~\textcite[Remark 2.7]{docarmo1992riemannian}.}

In the derivation below, we make all necessary regularity assumptions (e.g., we can assume that the measures are compactly supported) in order to convey the intuition. Suppose $(\mu_t)$ is a curve of measures in $\mc P_2(\R)$ and let $(v_t)$ be the corresponding tangent vectors.
The relationship between $(\mu_t)$ and $(v_t)$ is given by the \emph{continuity equation}~\parencite[Theorem 8.3.1]{ambrosio2008gradient}:
\begin{align}\label{eq:cont_eq}
    \partial_t \mu_t + (\mu_t v_t)' = 0.
\end{align}
Here, we use $\partial_t$ for the time derivative, and we use $'$ to denote spatial derivatives. If $F_\mu$ denotes the CDF of $\mu$, then~\eqref{eq:cont_eq} implies
\begin{align*}
    \partial_t F_{\mu_t}(x)
    &= \partial_t \int_{-\infty}^x \D \mu_t
    = -\int_{-\infty}^x (\mu_t v_t)'
    = -\mu_t(x) v_t(x).
\end{align*}
Next, if we differentiate the relation $F_{\mu_t}^{-1}(F_{\mu_t}(x)) = x$, we obtain
\begin{align*}
    0
    &= (\partial_t F_{\mu_t}^{-1})\bigl(F_{\mu_t}(x)\bigr) + (F_{\mu_t}^{-1})'\bigl(F_{\mu_t}(x)\bigr) \\
    &= (\partial_t F_{\mu_t}^{-1})\bigl(F_{\mu_t}(x)\bigr) + \frac{1}{F_{\mu_t}'(x)} \\
    &= (\partial_t F_{\mu_t}^{-1})\bigl(F_{\mu_t}(x)\bigr) + \frac{1}{\mu_t(x)},
\end{align*}
where we have applied the inverse function theorem. Thus,
\begin{align}\label{eq:tangent_space_isom}
    (\partial_t F_{\mu_t}^{-1})(\alpha)
    &= v_t\bigl(F_{\mu_t}^{-1}(\alpha)\bigr).
\end{align}
Differentiating again,
\begin{align*}
    (\partial_t^2 F_{\mu_t}^{-1})(\alpha)
    &= (\partial_t v_t)\bigl(F_{\mu_t}^{-1}(\alpha)\bigr) + v_t'\bigl(F_{\mu_t}^{-1}(\alpha)\bigr) (\partial_t F_{\mu_t}^{-1})(\alpha) \\
    &= (\partial_t v_t + v_t' v_t)\bigl(F_{\mu_t}^{-1}(\alpha)\bigr).
\end{align*}
However, we recognize $\partial_t v_t + v_t' v_t$ as the covariant derivative $\nabla_{v_t} v_t$ in $\mc P_2(\R)$ (see for example the discussion in~\cite[\S 5.1]{chenMeasurevaluedSplineCurves2018}). In particular, it implies
\begin{align*}
    \int_0^1 \abs{\partial_t^2 F_{\mu_t}^{-1}}^2
    &= \int_0^1 \abs{(\partial_t v_t + v_t' v_t) \circ F_{\mu_t}^{-1}}^2 \\
    &= \int \abs{\partial_t v_t + v_t' v_t}^2 \, \D \mu_t \\
    &= \norm{\nabla_{v_t} v_t}_{L^2(\mu_t)}^2,
\end{align*}
where we use the fact that the pushforward of the uniform distribution on $[0,1]$ under $F_{\mu_t}^{-1}$ is $\mu_t$. This equation shows that the norm (measured in $H$) of the acceleration of the curve $t\mapsto F_{\mu_t}^{-1}$ in $H$ is the same as the norm (measured in $\mc P_2(\R)$) of the acceleration of the curve $t\mapsto \mu_t$ in $\mc P_2(\R)$, and thus the E-spline cost functional is preserved by the embedding $\mc P_2(\R) \hookrightarrow H$.

\begin{rmk}
From the equation~\eqref{eq:tangent_space_isom}, we can also read off the isometry between the tangent space of $H$ and the tangent space of $\mc P_2(\R)$.
\end{rmk}

The reader who is uncomfortable with the formal derivation above can instead use the isometric embedding $\mc P_2(\R) \hookrightarrow L^2[0,1]$ as the definition of the geometry of $\mc P_2(\R)$ (and thus, the definition of E-splines on $\mc P_2(\R)$). Indeed, a rigorous development of second-order calculus on Wasserstein space faces significant technical hurdles~\parencite{gigli2012secondorder}, and such a definition is actually more convenient for the purposes of this paper.

\section{\allcaps{E-Splines and Transport Splines in One Dimension}}\label{appendix:e_vs_t_1d}

In this section, we investigate the relationship between transport splines and E-splines on $\mathcal{P}_2(\R)$, leading to a proof of Theorem~\ref{thm:e_vs_transport_1d}.
We will use the calculation in Appendix~\ref{appendix:splines_isometry}, and moreover we recommend that readers read Appendix~\ref{appendix:example} before this section in order to gain familiarity with E-splines.


Recall also that we assume that the measures $\mus_{t_i}$ are absolutely continuous in order to properly define the covariant derivative.
However, the embedding $\mathcal{P}_2(\R)\hookrightarrow L^2[0,1]$ allows us to rigorously extend the definition of an E-spline on all of $\mathcal{P}_2(\R)$.

\begin{proof}[Proof of Theorem~\ref{thm:e_vs_transport_1d}]
    Let $U$ be a uniform random variable on $[0,1]$, and define the random variables
    \begin{align*}
        X_{t_i} := F_{\mus_{t_i}}^\dagger(U) \sim \mus_{t_i}, \qquad i=0,1,\dotsc,N.
    \end{align*}
    From the discussion in Appendix~\ref{appendix:1d_coupling}, these random variables are simultaneously optimally coupled. In particular, each successive pair of these random variables is coupled via a Monge map. It follows from the definition of a transport spline that the stochastic process $(X_t)$ associated with the transport spline can be realized as the (Euclidean) cubic spline interpolating the points ${(X_{t_i})}_{i=0}^N$.
    
    Since each $X_{t_i}$ is a function of $U$, so is the interpolation $X_t$, so we can write $X_t = \tilde G_t(U)$. It follows that $(\tilde G_t)$ is the cubic spline in $H = L^2[0,1]$ which interpolates the quantiles ${\bigl(F_{\mus_{t_i}}^\dagger\bigr)}_{i=0}^N$, that is, $(\tilde G_t) = (G_t)$. At this point, we have established one of the assertions of Theorem~\ref{thm:e_vs_transport_1d}, namely, the explicit description of the process $(X_t)$ associated with the transport spline.
    
    Next, since $X_t = G_t(U)$, by hypothesis $G_t$ is an increasing function that pushes forward the uniform distribution to the law $\mu_t$ of $X_t$. By the characterization of Monge maps in one dimension (Appendix~\ref{appendix:1d_coupling}), it follows that $G_t = F_{\mu_t}^\dagger$.
    
    Since $(G_t)$ is a cubic spline, then it minimizes curvature, i.e., it solves the problem
    \begin{align*}
        \inf_{(G_t)} \int_0^1 \norm{\ddot G_t}_{L^2[0,1]}^2 \, \D t, \quad\text{s.t.}\quad G_{t_i} = F_{\mus_{t_i}}^\dagger~\text{for all}~i.
    \end{align*}
    From our characterization $G_t = F_{\mu_t}^\dagger$, it is clear that $(\mu_t)$ solves the problem
    \begin{align*}
        \inf_{(\mu_t)} \int_0^1 \norm{\partial_t^2 F_{\mu_t}^\dagger}_{L^2[0,1]}^2 \, \D t, \quad\text{s.t.}\quad \mu_{t_i} = \mus_{t_i}~\text{for all}~i,
    \end{align*}
    since the the first problem is a relaxation of the second (given a solution $(\mu_t)$ of the second problem, we can obtain a solution $(G_t) = (F_{\mu_t}^\dagger)$ for the first problem). Indeed, the second problem can be interpreted as the first problem with the additional constraint that the functions $G_t$ must be quantile functions.
    Next, in light of the isometry described in Appendix~\ref{appendix:splines_isometry}, the latter problem is equivalent to
    \begin{align*}
        \inf_{(\mu_t, v_t)} \int_0^1 \norm{\nabla_{v_t} v_t}^2_{L^2(\mu_t)} \, \D t, \quad\text{s.t.}\quad \mu_{t_i} = \mus_{t_i}~\text{for all}~i,
    \end{align*}
    where the infimum is taken over curves $(\mu_t)$ in $\mc P_2(\R)$ and their corresponding tangent vectors $(v_t)$. This problem is seen to be the E-spline problem~\eqref{eq:e_spline}.
    
    We have thus shown that $(\mu_t)$ is an E-spline. Actually, in light of Proposition~\ref{prop:e_spline_proj} and the fact that $(G_t)$ is the spline in $H$, then the E-spline is unique. Thus, the E-spline and transport spline coincide.
    
    Finally, it remains to show that the Lagrangian coupling $(\X_t)$ associated with the E-spline has the same law as $(X_t)$. For this, we can simply appeal to the embedding $\mc P_2(\R) \hookrightarrow H$ again.
    Indeed, since $\dot X_t = \partial_t F_{\mu_t}^\dagger(U)$, the calculation in Appendix~\ref{appendix:splines_isometry} shows that $\dot X_t = v_t(X_t)$ where $(v_t)$ is the tangent vector to $(\mu_t)$, so in fact $(X_t)$ is the Lagrangian coupling of $(\mu_t)$. \placeqed
\end{proof}



In particular, since the Gaussian measures form a 2 dimensional half-subspace of $L^2[0,1]$ with the usual identification $\mathcal{P}_2(\R)\hookrightarrow L^2[0,1]$,
the E-spline interpolation between Gaussian measures is the transport spline if transport splines is not degenerate at any time (i.e., the transport lies in the relative interior of Gaussian measures within $\mathcal{P}_2(\R)$). This yields Proposition~\ref{prop:gaussian_e_vs_t}.


We conclude this section by giving some examples showing that E-splines and transport splines can differ when the spline $(G_t)$ described in Theorem~\ref{thm:e_vs_transport_1d} does not stay within $\mathcal{P}_2(\R)\subset L^2[0,1]$. First, we give a simple Gaussian counterexample.

\begin{prop}
Let $\delta>0$ be sufficiently small and consider the measures
\begin{align*}
    \mus_0 = \mus_1 = \Normal(0,1), \qquad \mus_{1/3} = \mus_{2/3} = \Normal(0, \delta^2).
\end{align*}
Then, the E-spline~\eqref{eq:e_spline} interpolation $(\mu_t^{\rm E})$ and transport spline interpolation $(\mu_t^{\rm T})$ do not coincide for this data.
\end{prop}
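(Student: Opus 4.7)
The plan is to compute the transport spline explicitly using the scalar Gaussian structure, then argue that its image in $L^2[0,1]$ under the quantile embedding cannot have the regularity required of an E-spline minimizer.

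All data are centered one-dimensional Gaussians, so the Monge maps between consecutive measures are linear rescalings $x \mapsto (\tau/\sigma)x$. Composing them along a trajectory starting at $X_0 \sim \Normal(0,1)$ gives $X_{1/3} = \delta X_0$, $X_{2/3} = \delta X_0$, and $X_1 = X_0$, and the Euclidean cubic spline through these points is $X_t = g(t)\,X_0$ where $g$ is the natural cubic spline interpolating $(0,1), (1/3,\delta), (2/3,\delta), (1,1)$ in $\R$. Hence $\mu_t^{\rm T} = \Normal(0, g(t)^2)$.

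The key quantitative step is to compute $g(1/2)$. Using the symmetry $g(t) = g(1-t)$ to reduce to the middle segment (which must be an even quadratic about $t=1/2$) together with the natural boundary conditions $g''(0)=g''(1)=0$ and $C^2$-continuity at the knots, one solves a small linear system and obtains $g(1/2) = (23\delta - 3)/20$. For $\delta < 3/23$, this is negative, so by the intermediate value theorem $g$ has a zero $t^\star$; a short check shows the left segment $g|_{[0,1/3]}$ is monotonically decreasing from $1$ to $\delta$ and thus stays strictly positive, so $t^\star \in (1/3, 1/2)$, and a derivative computation on the middle quadratic confirms $g'(t^\star) \ne 0$, making $|g|$ have a genuine corner at $t^\star$ (and by symmetry at $1-t^\star$).

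To finish, I would invoke the isometric embedding of Appendix~\ref{appendix:splines_isometry}, $\mc P_2(\R) \hookrightarrow L^2[0,1]$ via $\mu \mapsto F_\mu^\dagger$, which rewrites the E-spline cost as $\int_0^1 \|\partial_t^2 F_{\mu_t}^\dagger\|_{L^2[0,1]}^2 \, \D t$. For the transport spline, $F_{\mu_t^{\rm T}}^\dagger(u) = |g(t)|\,\Phi^{-1}(u)$, and the corner of $|g|$ at $t^\star$ puts a Dirac mass in its distributional second derivative, forcing this integral to diverge (since $\Phi^{-1}$ is a nonzero element of $L^2[0,1]$). On the other hand, any smooth strictly positive interpolation of the values $1,\delta,\delta,1$ produces a curve of non-degenerate Gaussians with finite E-spline cost, so the infimum of the E-spline problem is finite. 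Hence the E-spline cannot coincide with the transport spline. The main obstacle is the bookkeeping involved in computing $g(1/2)$ and confirming the zero is simple, but no step in the plan goes beyond standard spline calculations and elementary distributional derivatives.
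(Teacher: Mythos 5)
Your proposal is correct and follows essentially the same route as the paper's proof: compute the scalar cubic spline $g$ through the standard deviations $(1,\delta,\delta,1)$, observe that it dips below zero between $1/3$ and $2/3$, and conclude that $|g|$ has a corner, so the transport spline is not $\mc C^2$ and hence cannot be an E-spline. You are in fact slightly more careful than the paper, which states that the failure of smoothness occurs ``at time $1/3$'' --- as your computation $g(1/2)=(23\delta-3)/20$ and the monotonicity check on $[0,1/3]$ show, $g(1/3)=\delta>0$ and the zero crossing (hence the corner of $|g|$) actually lies at some $t^\star\in(1/3,1/2)$; your distributional-second-derivative/infinite-$L^2$-cost argument also makes the final step more explicit than the paper's ``not $\mc C^2$ hence not an E-spline.''
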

\begin{proof}
    Let $(\Y_t)$ denote the stochastic process corresponding to the transport spline.
    It is easy to see that $(\Y_0, \Y_{1/3}, \Y_{2/3}, \Y_1) = (\Y_0, \delta \Y_0, \delta \Y_0, \Y_0)$ is the optimal coupling at the knots.
    If we let $S_t$ denote the linear mapping which produces the spline (as introduced in Section~\ref{scn:gaussian_case}), it follows that \[ \Y_t = S_t(\Y_0, \delta \Y_0, \delta \Y_0, \Y_0) = S_t(1,\delta,\delta,1) \Y_0, \] so that $\mu_t^{\rm T} = \Normal(0, {S_t(1,\delta,\delta,1)}^2)$.
    
    If we identify the space of Gaussians with the half-ray $[0,\infty)$, then the transport spline corresponds to the curve of standard deviations $t\mapsto \abs{S_t(1,\delta,\delta,1)}$. However, because the spline curve $t\mapsto S_t(1,0,0,1)$ becomes negative between $1/3$ and $2/3$, then so does the curve $t\mapsto S_t(1,\delta,\delta,1)$ for small $\delta$.
    It can be checked that at time $1/3$, the curve $t\mapsto \abs{S_t(1,\delta,\delta,1)}$ is not $\mc C^2$ differentiable and therefore cannot be an E-spline. \placeqed
\end{proof}

This counterexample, however, is somewhat degenerate because the transport spline passes through a degenerate measure, and thus it is not clear if the E-spline exists, and if so whether it remains non-degenerate.
We now give another example where the transport spline does not coincide with the E-spline, but the transport spline remains non-degenerate; hence, we believe that the E-spline problem is well-posed for these data.

For this example, we take $\delta > 0$ and let
\begin{align}\label{eq:thibaut_ex}
    \mus_0 = \mus_1 = \text{uniform on}~[-(1+\delta), -1] \cup [1, 1+\delta], \qquad \mus_{1/4} = \mus_{3/4} = \text{uniform on}~[-\delta,\delta].
\end{align}


\begin{figure}[h!]
\centering
\includegraphics[clip, width=0.75\textwidth]{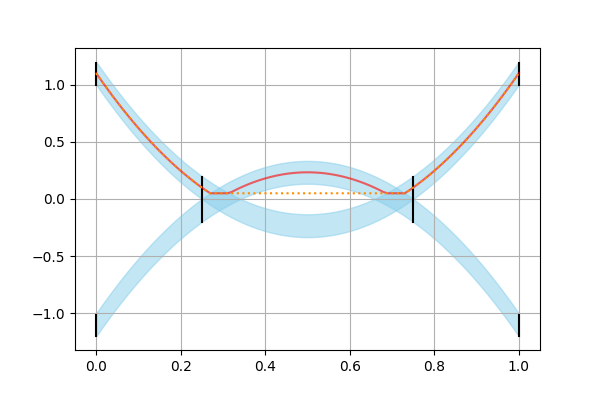}
\caption{\small Transport splines interpolation for the four uniform distributions as in~\eqref{eq:thibaut_ex}. The red line is the quantile of order $3/4$ for the interpolation and the orange dotted line represents the corresponding candidate $\bar F^\dagger_t(u)$ for $u=3/4$ introduced in~\eqref{eq:thibaut_ex_competitor}.}
\label{fig:counterex}
\end{figure}

As in the proof of Proposition~\ref{prop:GW_is_BW}, $\mathcal{P}_2(\R)$ is seen as a convex subset of $L^2[0,1]$ where probability measures are identified as their quantile function.
So our E-spline interpolation can be reformulated as the problem
\[
\inf_{(\mu_t)} \int_0^1 \int_0^1 \norm{\ddot F_t^\dagger(u)}^2 \, \D u \, \D t \qquad\text{s.t.}\qquad \mu_{t} = \mus_{t}~\text{for all}~t\in\{0,1/4,3/4,1\},
\]
where $F_t^\dagger$ denotes the quantile function of $\mu_t$.
In particular, the E-spline interpolation problem can be seen as the transport spline interpolation with the extra constraint that the trajectories of the particles must stay ordered (see Theorem~\ref{thm:e_vs_transport_1d}).

Denote by $(\Y_t)$ the random process given by the transport spline problem.
One can check that 
\[
\Y_t=\text{sign}(\Y_0) \, \Bigl[\frac{16}{3} {(t-1/2)}^2-\frac{1}{3} +\abs{\Y_0}-1\Bigr].
\]

Clearly, for $\delta$ small enough the quantiles $F^\dagger_t(u)$ of order $u>1/2$ associated to the transport spline interpolation decrease before $t=1/4$ and increase after $=3/4$.
In particular, for each $u>1/2$, there exists $1/4<t^-_u<t^+_u<3/4$ such that $\partial_t F^\dagger_t(u)|_{t=t_u^-}=\partial_t F^\dagger_t(u)|_{t=t_u^+}=0$ and $|\partial_t^2 F^\dagger_{t}(u)|>0$ for $t\in(t^-_u,t^+_u)$.
One can check then that the function $u\mapsto \bar F^\dagger_t$ at time $t\in[0,1]$ defined by
\begin{align}\label{eq:thibaut_ex_competitor}
\bar F^\dagger_t(u)=
\begin{dcases}
F^\dagger_{t^-_u}(u), & u \in (t^-_u,t^+_u)\\
F^\dagger_{t}(u), & \text{otherwise}
\end{dcases}
\end{align}
is a quantile function.
In particular, the measures with quantiles $\bar F^\dagger_t$ interpolate the measures~\eqref{eq:thibaut_ex} and 
\[
|\partial_t^2 \bar F^\dagger_t(u)|=
\begin{cases}
0, & u \in (t^-_u,t^+_u)\\
|\partial_t^2 F^\dagger_{t}(u)|, & \text{otherwise},
\end{cases}
\]
ensuring that $\bar F^\dagger_t$ has a lower cost than the transport spline. Thus, the transport spline is not the E-spline. \placeqed

Since the transport spline is non-degenerate for this example, we believe that the E-spline also exists and is non-degenerate. Therefore, we expect that the failure of transport splines to equal E-splines in general is not simply due to the fact that E-splines can be ill-posed.

To summarize: when the trajectories of the transport spline remain ordered throughout the interpolation, then it coincides with the E-spline. Otherwise, there is no reason to expect the two notions of spline to coincide.

\section{\allcaps{Proof of the Approximation Guarantee}}\label{appendix:approx}

Throughout, we assume all random variables are defined on a probability space with probability measure $\Pr$.
Thus, if $X$ is a random variable taking values in $\R^d$, then $\norm X_{L^2(\Pr)} := \sqrt{\E[\norm X^2]}$.

We begin by describing the general strategy for proving the approximation guarantee.
Consider the interval $[t_{i-1}, t_i]$, let $(\X_t)$ denote the Lagrangian coupling for $(\mus_t)_t$, and let $(\Y_t)$ be the stochastic process associated with the transport spline.
Since $\mu_{t_{i-1}} = \mus_{t_{i-1}}$, we can couple the two processes together so that $\Y_{t_{i-1}} = \X_{t_{i-1}}$.
By the definition of the Wasserstein distance, we can bound $W_2(\mu_t,\mus_t) \le \norm{\Y_t - \X_t}_{L^2(\Pr)}$, so it suffices to show that the trajectories $(\Y_t)$ and $(\X_t)$ are close on the interval $[t_{i-1}, t_i]$.

We will use a basic deterministic fact: if two curves $x$ and $y$ defined on $[0,\delta]$ are such that:
\begin{itemize}
    \item $x(0) = y(0)$,
    \item $\dot x(0) = \dot y(0) + O(\delta)$, and
    \item the two curves satisfy the curvature bound \[ \sup_{t\in [0,\delta]}\{\norm{\ddot x(t)} \vee \norm{\ddot y(t)}\} \le R, \]
\end{itemize}
then it follows that $\sup_{t \in [0,\delta]}{\norm{x(t) - y(t)}} \le CR\delta^2$, where $C$ is a numerical constant.

\begin{enumerate}
    \item the velocities of $\Y_t$ and $\X_t$ at time $t=t_{i-1}$ are within $O(\delta)$ of each other (Proposition~\ref{prop:velocity_control});
    \item the trajectory $(\Y_t)$ has curvature $O(R)$ (Proposition~\ref{prop:curv_transport_spline});
    \item the trajectory $(\X_t)$ has curvature $O(R)$;
\end{enumerate}
The last step is immediate from our assumptions; the point of the second step is to control the curvature of the interpolated process ${(\Y_t)}$ in terms of the curvature of the true process ${(\X_t)}$.

Putting these pieces together, we give the proof of Theorem~\ref{thm:approx_thm} in Appendix~\ref{appendix:finally_the_proof}.

\subsection{Notation}

Since we study the approximation guarantee in the Bures-Wasserstein setting, we can equivalently think in terms of the probability measure (a Gaussian), or in terms of the covariance matrix. It will be useful to employ the language of matrices, so we fix notational conventions here.

Associated with the curve ${(\mus_t)}$, we have a corresponding curve of covariance matrices ${(\Sigma_t)}$ such that $\mus_t = \Normal(0, \Sigma_t)$.

Given a matrix $A \in \R^{d\times d}$, we define the norm
\begin{align*}
    \norm A_\Sigma
    &:= \sqrt{\langle A, \Sigma A\rangle}.
\end{align*}
The norm is defined so that if $\X \sim \Normal(0, \Sigma)$, then $\norm{A\X}_{L^2(\Pr)} = \norm A_\Sigma$. From our eigenvalue bound we have $\norm A_\Sigma \ge \sqrt{\lambda_{\min}(\Sigma)} \, \norm A_{\rm F}$.

The Monge map $T$ between two Gaussians is the linear map $T(X)$ given in~\eqref{eq:gaussian_ot_map} and abusing notation slightly, we identify the map $T$ with the corresponding matrix, and we write $T(x) = Tx$. In particular, linearity of the Monge maps implies that the velocity vector field ${(v_t^\star)}$ associated to the Lagrangian coupling of the curve, is also linear for each $t$:  $v_t^\star$ is a symmetric linear mapping $\R^d\to\R^d$, that is, there exists a symmetric matrix $V_t^\star \in \R^{d\times d}$ such that $v_t^\star(x) = V_t^\star x$.


\subsection{Control of the Velocities}

We write $\delta_i := t_{i+1} - t_i$ and $\delta := \max_{i\in [N]} \delta_i$.
The first step is to prove a quantitative bound on how well the Monge map $T_i$ approximates ${\id} + \delta_i v_{t_{i-1}}$. We prove a more general approximation result which may be of independent interest.

\begin{thm}\label{thm:transport_approx}
Let $t,t+h \in [0,1]$, where $h \ne 0$.
Write $\delta := \abs h$ and assume $\delta \le c\sqrt{\lambda_{\min}(\Sigma_t)}/L$, for some constant $0 < c < 1$.
Let $T$ denote the Monge map from $\mus_t$ to $\mus_{t+h}$, and let $\bar T : \R^d\to\R^d$ be another linear mapping satisfying the following properties:
\begin{enumerate}
    \item $\bar T$ can be identified with a symmetric matrix.
    \item $\norm{\bar T \X_t - \X_t}_{L^2(\Pr)} \le c\sqrt{\lambda_{\min}(\Sigma_t)}$.
\end{enumerate}
Then,
\begin{align*}
    \norm{T\X_t - \bar T\X_t}_{L^2(\Pr)} \le \frac{1+2c}{1-c} \, \norm{\bar T \X_t - \X_{t+h}}_{L^2(\Pr)}.
\end{align*}
\end{thm}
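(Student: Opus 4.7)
The strategy is to leverage the optimality of the Monge map $T$ against the symmetric competitor $\bar T$, via an algebraic identity that is tight in the Gaussian setting.

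First I would verify that, under the smallness hypothesis, $\bar T$ is positive definite. Writing $\bar V := \bar T - I$ (symmetric by assumption), the bound $\norm{\bar T X_t - X_t}_{L^2(\Pr)} \le c\sqrt{\lambda_{\min}(\Sigma_t)}$ together with $\norm{\bar V X_t}_{L^2(\Pr)}^2 = \operatorname{tr}(\bar V \Sigma_t \bar V) \ge \lambda_{\min}(\Sigma_t)\,\|\bar V\|_{\mathrm{F}}^2$ forces $\|\bar V\|_{\mathrm{op}} \le c < 1$, so that $\bar T \succeq (1-c) I \succ 0$. Hence $\bar T$ is itself a Gaussian Monge map from $\mus_t$ to its pushforward.

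Next, since the process is jointly Gaussian and linear, write $X_{t+h} = \Phi X_t$ for a deterministic matrix $\Phi$ satisfying $\Phi \Sigma_t \Phi^\top = \Sigma_{t+h} = T \Sigma_t T$. Let $a := \norm{T X_t - \bar T X_t}_{L^2(\Pr)}$ and $\epsilon := \norm{\bar T X_t - X_{t+h}}_{L^2(\Pr)}$. Expanding both squared norms in traces (using the two marginal identities and the symmetry of $T$ and $\bar T$) yields the identity
\begin{equation*}
    a^2 - \epsilon^2 = 2\operatorname{tr}\bigl((\Phi - T)\Sigma_t\bigr) + 2\operatorname{tr}\bigl((\bar T - I)(\Phi - T)\Sigma_t\bigr).
\end{equation*}
Optimality of $T$ handles the first trace: among all $M$ with $M \Sigma_t M^\top = \Sigma_{t+h}$, the Monge map $T$ minimizes $\norm{M X_t - X_t}_{L^2(\Pr)}$ and therefore maximizes $\operatorname{tr}(M \Sigma_t)$, so $\operatorname{tr}((\Phi - T)\Sigma_t) \le 0$. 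For the cross term, rewriting it as the Hilbert-space inner product $\langle (\bar T - I) X_t,\,(\Phi - T) X_t \rangle_{L^2(\Pr)}$ and applying Cauchy--Schwarz together with the smallness hypothesis gives
\begin{equation*}
    \bigl|\operatorname{tr}\bigl((\bar T - I)(\Phi - T)\Sigma_t\bigr)\bigr| \le c\sqrt{\lambda_{\min}(\Sigma_t)} \cdot \norm{X_{t+h} - T X_t}_{L^2(\Pr)}.
\end{equation*}

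A naive triangle inequality $\norm{X_{t+h} - T X_t}_{L^2(\Pr)} \le a + \epsilon$ would lead only to the additive estimate $a - \epsilon \le 2c\sqrt{\lambda_{\min}(\Sigma_t)}$. Upgrading this to the stated multiplicative bound $a \le \frac{1+2c}{1-c}\,\epsilon$ is the main technical obstacle; I expect it requires exploiting the lower spectral bound $\bar T \succeq (1-c) I$ (equivalently $\|\bar T^{-1}\|_{\mathrm{op}} \le (1-c)^{-1}$) inside the cross-term estimate, for instance by factoring $\Phi - T = \bar T\,[\bar T^{-1}(\Phi - T)]$ and applying Cauchy--Schwarz in a $\bar T$-weighted inner product, or by splitting $\Phi - T = (\Phi - \bar T) + (\bar T - T)$ and bounding the two contributions separately so that the $c\sqrt{\lambda_{\min}(\Sigma_t)}$ slack becomes a multiplicative factor of $c$ times $a$ or $\epsilon$ that can be absorbed upon rearrangement of the resulting quadratic inequality.
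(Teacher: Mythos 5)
Your setup is sound as far as it goes: the identity $a^2 - \epsilon^2 = 2\tr\bigl((\Phi-T)\Sigma_t\bigr) + 2\tr\bigl((\bar T - I)(\Phi - T)\Sigma_t\bigr)$ is correct (with $\Phi$ the linear flow map of the Lagrangian coupling), the sign $\tr\bigl((\Phi - T)\Sigma_t\bigr)\le 0$ does follow from optimality of the Monge map among linear pushforwards, and the Cauchy--Schwarz estimate on the cross term is right. But you have accurately diagnosed that this only yields the additive estimate $a - \epsilon \le 2c\sqrt{\lambda_{\min}(\Sigma_t)}$, and the step you defer --- upgrading this to the multiplicative bound --- is precisely where the argument is missing, not merely a routine absorption. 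The obstruction is structural: your identity has $a^2 - \epsilon^2$ on the left, whose factorization $(a-\epsilon)(a+\epsilon)$ forces you to cancel an $(a+\epsilon)$ from the right, and what remains is $c\sqrt{\lambda_{\min}(\Sigma_t)}$, a fixed quantity with the wrong units to be compared directly with $a$ or $\epsilon$. None of the manipulations you propose (factoring through $\bar T^{-1}$, splitting $\Phi-T$, using $\lVert \bar T - I\rVert_{\mathrm{op}} \le c$) make the $\sqrt{\lambda_{\min}}$ disappear: each Cauchy--Schwarz you can apply pairs an $L^2(\Pr)$ norm (producing $a$ or $\epsilon$) against the hypothesis $\lVert \bar T X_t^\star - X_t^\star \rVert_{L^2(\Pr)} \le c\sqrt{\lambda_{\min}}$, so that dimensional scalar keeps surviving.

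The paper's proof sidesteps this entirely by never squaring. It tests the equality in law of $T\X_t$ and $\X_{t+h}$ against a single quadratic form $\varphi(x) = \langle x, Ax\rangle$ where $A := (T - \bar T)/\lVert T - \bar T\rVert_{\Sigma_t}$ is \emph{normalized} to have $\Sigma_t$-weighted norm $1$. The normalization buys two things simultaneously: the main term $\E\langle\X_t, A(T - \bar T)\X_t\rangle$ collapses to exactly $\lVert T - \bar T\rVert_{\Sigma_t} = a$ (linear, not quadratic), and the Frobenius bound $\lVert A\rVert_{\rm F} \le 1/\sqrt{\lambda_{\min}(\Sigma_t)}$ supplies precisely the reciprocal $\sqrt{\lambda_{\min}}$ needed to cancel the one coming from the hypotheses $\lVert T - I\rVert_{\Sigma_t}, \lVert \bar T - I\rVert_{\Sigma_t} \le c\sqrt{\lambda_{\min}}$. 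That cancellation is what turns every error term into a multiple of $c\cdot a$ or $\epsilon$, so the final rearrangement $(2-2c)a \le (2+4c)\epsilon$ is a genuine linear inequality. Absent an analogous normalization, your route needs a new idea; the two speculations you offer do not by themselves supply one.
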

\begin{proof}
    Let $e := \X_{t+h} - \bar T\X_t$.

    Consider the quadratic function $\varphi : \R^d\to\R$ defined by $\varphi(x) := \langle x, A x \rangle$, where $A := (T-\bar T)/\norm{T-\bar T}_{\Sigma_t}$.
    Note that $A$ is symmetric (since $T$ and $\bar T$ are).
    Then,
    \begin{align*}
        \E \varphi(T\X_t)
        &= \E \varphi(\X_{t+h})
        = \E \varphi(\bar T\X_t + e).
    \end{align*}
    Expanding this out,
    \begin{align*}
        0
        &= \E\langle (T+\bar T) \X_t + e, A \{(T-\bar T) \X_t - e\} \rangle \\
        &= \E\langle (T+\bar T) \X_t, A (T-\bar T) \X_t \rangle + \text{error}.
    \end{align*}
    We next bound the error term.
    First, note that by our assumption,
    \begin{align*}
        \norm{T-I_d}_{\Sigma_t}
        &= W_2(\mus_t, \mus_{t+h})
        \le L\delta
        \le c\sqrt{\lambda_{\min}}, \\
        \norm{\bar T-I_d}_{\Sigma_t}
        &\le c\sqrt{\lambda_{\min}},
    \end{align*}
    where we write $\lambda_{\min} = \lambda_{\min}(\Sigma_t)$.
    The error term is split into two further terms.
    For the first term,
    \begin{align*}
        \abs{\E\langle e, A(T-\bar T) \X_t \rangle}
        &\le \norm{e}_{L^2(\Pr)} \, \norm{A(T-\bar T)}_{\Sigma_t} \\
        &\le \norm{e}_{L^2(\Pr)} \, \norm A_{\rm F} \, \norm{T-\bar T}_{\Sigma_t} \\
        &\le \norm{e}_{L^2(\Pr)} \, \frac{1}{\sqrt{\lambda_{\min}}} \, (\norm{T-I_d}_{\Sigma_t} + \norm{\bar T - I_d}_{\Sigma_t}) \\
        & \le 2c \, \norm{e}_{L^2(\Pr)},
    \end{align*}
    where we used the fact that $\norm A_{\Sigma_t} \le 1$ implies that $\norm A_{\rm F} \le 1/\sqrt{\lambda_{\min}}$.
    The second term is bounded by
    \begin{align*}
        \abs{\E\langle (T+\bar T) \X_t + e, Ae \rangle}
        &\le \abs{\E \langle T\X_t + \X_{t+h} - 2\X_t, Ae \rangle} + 2\abs{\E\langle \X_t, Ae \rangle} \\
        &\le \{\norm A_{\rm F} \, (\norm{T-I_d}_{\Sigma_t} + \norm{\X_{t+h} - \X_t}_{L^2(\Pr)})  + 2\norm A_{\Sigma_t}\} \, \norm{e}_{L^2(\Pr)} \\
        &\le 2 \, (1+c) \, \norm{e}_{L^2(\Pr)},
    \end{align*}
    where we used
    \begin{align*}
        \norm{\X_{t+h}-\X_t}_{L^2(\Pr)}^2
        &= \E\Bigl[\Bigl\lVert \int_t^{t+h} \dot X^\star_s \, \D s\Bigr\rVert^2\Bigr]\le \delta \, \Bigl\lvert \int_t^{t+h} \norm{\dot X^\star_s}_{L^2(\Pr)}^2 \, \D s\Bigr\rvert
        \le L^2 \delta^2.
    \end{align*}
    Thus, we have
    \begin{align*}
        2\norm{T-\bar T}_{\Sigma_t}
        &= 2\E\langle \X_t, A(T-\bar T) \X_t \rangle \\
        &= -\E\langle (T+\bar T - 2I_d) \X_t, A(T-\bar T) \X_t \rangle + \text{error} \\
        &\le (\norm{T-I_d}_{\Sigma_t} + \norm{\bar T - I_d}_{\Sigma_t}) \, \norm A_{\rm F} \, \norm{T-\bar T}_{\Sigma_t}  + \text{error} \\
        &\le 2c \, \norm{T-\bar T}_{\Sigma_t} + (2+4c) \, \norm{e}_{L^2(\Pr)}
    \end{align*}
    which finally yields
    \begin{align*}
        \norm{T-\bar T}_{\Sigma_t}
        &\le \frac{1+2c}{1-c} \, \norm{e}_{L^2(\Pr)}
    \end{align*}
    as required.\placeqed
\end{proof}

\begin{cor}\label{cor:transport_approx}
Let $t,t+h \in [0,1]$, where $h \ne 0$, and write $\delta := \abs h$.
Let $k \in \{0,1,2\}$, and suppose $\delta$ is small enough so that
\begin{align*}
    \sum_{i=1}^k \frac{R_i \delta^i}{i!} \le c\sqrt{\lambda_{\min}(\Sigma_t)},
\end{align*}
where we set $R_i := \sup_{t\in [0,1]}{\norm{\partial^i \X}_{L^2(\Pr)}}$.
Then,
\begin{align*}
    \Bigl\lVert T\X_t - \sum_{i=0}^k \frac{h^i}{i!} \, {(\partial^i \X)}_t \Bigr\rVert_{L^2(\Pr)}
    &\le \frac{1+2c}{1-c} \, \frac{R_{k+1} \delta^{k+1}}{(k+1)!}.
\end{align*}
\end{cor}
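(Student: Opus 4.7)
The plan is to invoke Theorem~\ref{thm:transport_approx} with an auxiliary map $\bar T$ built from the Taylor data of the Lagrangian trajectory $\X$ at time $t$. In the Bures-Wasserstein setting recalled in the Notation section, the Lagrangian coupling satisfies the linear ODE $\dot \X_t = V_t^\star \X_t$ with $V_t^\star$ symmetric, so repeated differentiation produces linear operators $A_i(t)$ with $(\partial^i \X)_t = A_i(t)\X_t$; explicitly $A_0 = I_d$, $A_1 = V_t^\star$, and $A_2 = \dot V_t^\star + (V_t^\star)^2$. I would set
\begin{equation*}
    \bar T := \sum_{i=0}^k \frac{h^i}{i!}\, A_i(t), \qquad k \in \{0,1,2\},
\end{equation*}
so that by construction $\bar T\X_t = \sum_{i=0}^k (h^i/i!)(\partial^i\X)_t$ is exactly the Taylor polynomial we wish to compare to $T\X_t$.

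The first task is to verify the two hypotheses of Theorem~\ref{thm:transport_approx}. For symmetry, each of $I_d$, $V_t^\star$, $\dot V_t^\star$ (symmetry is preserved under differentiation), and $(V_t^\star)^2$ (the square of a symmetric matrix) is symmetric, so $\bar T$ is symmetric as a sum of symmetric matrices. For the size condition, applying the triangle inequality to $\bar T\X_t - \X_t = \sum_{i=1}^k (h^i/i!)(\partial^i\X)_t$ together with the definition $R_i = \sup_s\norm{(\partial^i\X)_s}_{L^2(\Pr)}$ gives
\begin{equation*}
    \norm{\bar T\X_t - \X_t}_{L^2(\Pr)} \le \sum_{i=1}^k \frac{\delta^i R_i}{i!} \le c\sqrt{\lambda_{\min}(\Sigma_t)},
\end{equation*}
which is precisely the standing hypothesis of the corollary.

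The second task is to bound $\norm{\bar T\X_t - \X_{t+h}}_{L^2(\Pr)}$. Applying Taylor's theorem with integral remainder pathwise to the $\mc C^{k+1}$ process $\X$ yields
\begin{equation*}
    \X_{t+h} - \bar T\X_t = \frac{1}{k!}\int_t^{t+h} (t+h-s)^k \, (\partial^{k+1}\X)_s \, \D s.
\end{equation*}
Taking $L^2(\Pr)$ norms, using Minkowski's inequality, and invoking $\sup_s\norm{(\partial^{k+1}\X)_s}_{L^2(\Pr)} = R_{k+1}$ produces
\begin{equation*}
    \norm{\bar T\X_t - \X_{t+h}}_{L^2(\Pr)} \le \frac{R_{k+1}\delta^{k+1}}{(k+1)!},
\end{equation*}
after which Theorem~\ref{thm:transport_approx} delivers the claimed bound with no further work.

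The main subtlety is the symmetry of $\bar T$ at $k=2$: it works because the only potentially problematic contribution, the nonlinear term $(V_t^\star)^2$ coming from the chain rule, is automatically symmetric as a square of a symmetric matrix, while $\dot V_t^\star$ inherits symmetry from $V_t^\star$. Extending to higher orders would require tracking non-commuting products of $V_t^\star$ and its derivatives, which is the natural obstruction restricting the present formulation to $k \le 2$.
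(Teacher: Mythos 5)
Your proposal is correct and follows essentially the same approach as the paper: take $\bar T$ to be the linear map realizing the degree-$k$ Taylor polynomial of $\X$ at $t$, verify symmetry term-by-term (with the $k\le 2$ restriction arising exactly as you note), verify the $L^2$ smallness hypothesis from the corollary's standing assumption, bound the error $\X_{t+h}-\bar T\X_t$ via Taylor's theorem with integral remainder, and then invoke Theorem~\ref{thm:transport_approx}. The only cosmetic difference is that you spell out the symmetry check a bit more explicitly than the paper does.
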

\begin{proof}
    We apply Theorem~\ref{thm:transport_approx} with
    \begin{align*}
        \bar T \X_t
        &= \sum_{i=0}^k \frac{h^i}{i!} \, {(\partial^i \X)}_t.
    \end{align*}
    Using $\dotX_t = V_t^\star \X_t$, where $V_t^\star$ is symmetric, we obtain:
    \begin{align*}
        \dotX_t
        &= V_t^\star \X_t, \\
        \ddotX_t
        &= \dot V_t^\star \X_t + V_t^{\star 2} \X_t
        = (\dot V_t^\star + V_t^{\star 2}) \X_t, \\
        \dddotX_t
        &= (\ddot V_t^\star + 2\dot V_t^\star V_t^\star + V_t^\star \dot V_t^\star + V_t^{\star 3}) \X_t, \\
        &\vdots
    \end{align*}
    Observe that the $i$th derivative of $t \mapsto \X_t$ at $t$ is indeed a linear function of $\X_t$, but for $i\ge 3$ it is no longer given by a symmetric matrix, so it no longer satisfies the first assumption of Theorem~\ref{thm:approx_thm}; this is why we restrict ourselves to $k = 0, 1, 2$.
    
    For the third assumption of Theorem~\ref{thm:approx_thm}, note that
    \begin{align*}
        \norm{\bar T \X_t - \X_t}_{L^2(\Pr)}
        &= \Bigl\lVert \sum_{i=1}^k \frac{h^i}{i!} \, {(\partial^i \X)}_t \Bigr\rVert_{L^2(\Pr)} \le \sum_{i=1}^k \frac{\delta^i R_i}{i!}
        \le c\sqrt{\lambda_{\min}(\Sigma_t)},
    \end{align*}
    by our assumption on $\delta$.
    
    Finally, the error $e := \X_{t+h} - \bar T\X_t$ is controlled via Taylor's theorem:
    \begin{align*}
        \norm e_{L^2(\Pr)}
        &= \Bigl\lVert \X_{t+h} - \sum_{i=0}^k \frac{h^i}{i!} \, {(\partial^i \X)}_t \Bigr\rVert_{L^2(\Pr)} \\
        &= \Bigl\lVert \int_t^{t+h} \frac{{(\partial^{k+1} \X)}_s}{k!} \, {(s-t)}^k \, \D s\Bigr\rVert_{L^2(\Pr)} \\
        &\le \frac{R_{k+1} \delta^{k+1}}{(k+1)!}. \placeqedmm
    \end{align*}
\end{proof}

\begin{rmk}\label{rmk:sharp}
If we let $\delta \searrow 0$, we can also take $c\searrow 0$, obtaining
\begin{align*}
    \limsup_{\delta\searrow 0} \frac{1}{\delta^{k+1}} \Bigl\lVert T\X_t - \sum_{i=0}^k \frac{h^i}{i!} \, {(\partial^i \X)}_t \Bigr\rVert_{L^2(\Pr)}
    &\le \frac{R_{k+1}}{(k+1)!}.
\end{align*}
Comparing this to a Euclidean Taylor expansion, this is apparently sharp.
\end{rmk}

Corollary~\ref{cor:transport_approx} says that in order to prove our desired result $\dot \Y_{t_{i-1}} = \dotX_{t_{i-1}} + O(\delta)$, it suffices to show that $\dot \Y_{t_{i-1}} = (T_i \Y_{t_{i-1}} - \Y_{t_{i-1}})/\delta_i + O(\delta)$ (since the RHS of both expressions equals $V_{t_{i-1}}^\star \Y_{t_{i-1}} = V_{t_{i-1}}^\star \X_{t_{i-1}}$ up to $O(\delta)$). Since the latter statement involves only the process $(\Y_t)$, it is easier to prove.

However, there is still a major difficulty to overcome: $\dot \Y_{t_{i-1}}$ is the velocity of an interpolating cubic spline, which depends on all of the interpolated points $\Y_{t_0}, \Y_{t_1},\dotsc,\Y_{t_N}$. In Appendix~\ref{appendix:cubic_splines}, we show that the derivative of the cubic spline interpolation can be understood in terms of the linear system of equations involving the quantities
\begin{align*}
    \Delta_i
    &:= \frac{\Y_{t_{i+1}} - \Y_{t_i}}{\delta_{i+1}} - \frac{\Y_{t_i} - \Y_{t_{i-1}}}{\delta_i}, \qquad i\in [N-1].
\end{align*}
Therefore, we next control these quantities.

\begin{prop}\label{prop:Delta_control}
Assume $\delta \le \sqrt{\lambda_{\min}}/(2L)$.
    For each $i \in [N-1]$, it holds that
    \begin{align*}
        \bigl\lVert \frac{\Y_{t_{i+1}} - \Y_{t_i}}{\delta_{i+1}} - \frac{\Y_{t_i} - \Y_{t_{i-1}}}{\delta_i} \bigr\rVert_{L^2(\Pr)}
        \le \frac{25}{4} R\delta.
    \end{align*}
\end{prop}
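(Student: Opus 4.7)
Fix $i \in [N-1]$. Since $\Y_{t_i}$ and $\X_{t_i}$ both have law $\mus_{t_i}$, I would couple them so that $\Y_{t_i} = \X_{t_i}$ almost surely; this does not change the $L^2(\Pr)$-norm of any quantity depending only on $(\Y_{t_{i-1}}, \Y_{t_i}, \Y_{t_{i+1}})$. Under this coupling, Step~1 of Algorithm~\ref{alg:interpolate} gives $\Y_{t_{i+1}} = T_{i+1}(\X_{t_i})$, while $\Y_{t_{i-1}} = T_i^{-1}(\X_{t_i})$, where $T_i^{-1}$ coincides with the Monge map from $\mus_{t_i}$ to $\mus_{t_{i-1}}$ thanks to absolute continuity.

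I would then apply Corollary~\ref{cor:transport_approx} twice, both times with $k=1$, $t=t_i$, and $c = 1/2$: once with $h = \delta_{i+1} > 0$ (applied to the forward Monge map $T_{i+1}$) and once with $h = -\delta_i < 0$ (applied to $T_i^{-1}$). The hypothesis $\delta \le \sqrt{\lambda_{\min}}/(2L)$ exactly matches the corollary's requirement $\delta \le c\sqrt{\lambda_{\min}(\Sigma_{t_i})}/L$ for this choice of $c$, since $\lambda_{\min} \le \lambda_{\min}(\Sigma_{t_i})$. The resulting bounds are
\[
\norm{\Y_{t_{i+1}} - \X_{t_i} - \delta_{i+1}\,\dotX_{t_i}}_{L^2(\Pr)} \le 2R\,\delta_{i+1}^2, \qquad \norm{\Y_{t_{i-1}} - \X_{t_i} + \delta_i\,\dotX_{t_i}}_{L^2(\Pr)} \le 2R\,\delta_i^2,
\]
where the constant $2 = \tfrac{1}{2}\cdot\tfrac{1+2c}{1-c}\big|_{c=1/2}$ comes directly from the corollary.

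Denoting the two error terms by $E_+$ and $E_-$, the key observation is that the linear-in-$h$ terms both equal $\dotX_{t_i}$ and therefore cancel in the discrete second derivative:
\[
\frac{\Y_{t_{i+1}} - \Y_{t_i}}{\delta_{i+1}} - \frac{\Y_{t_i} - \Y_{t_{i-1}}}{\delta_i} = \Bigl(\dotX_{t_i} + \frac{E_+}{\delta_{i+1}}\Bigr) - \Bigl(\dotX_{t_i} - \frac{E_-}{\delta_i}\Bigr) = \frac{E_+}{\delta_{i+1}} + \frac{E_-}{\delta_i}.
\]
The triangle inequality then yields an $L^2(\Pr)$ bound of $2R\delta_{i+1} + 2R\delta_i \le 4R\delta$, which is actually stronger than the stated $\tfrac{25}{4}R\delta$.

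The only real subtlety is the backward application of Corollary~\ref{cor:transport_approx}: one must verify that the Monge map from $\mus_{t_i}$ to $\mus_{t_{i-1}}$ is indeed $T_i^{-1}$ and that it still fits the symmetry hypothesis of Theorem~\ref{thm:transport_approx}. Both are immediate in the Bures--Wasserstein setting from the explicit formula~\eqref{eq:gaussian_ot_map}. Beyond that, the argument is a short telescoping of two first-order Taylor approximations, so I expect no additional technical obstacle.
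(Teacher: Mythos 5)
Your proof is correct, and it takes a genuinely different and cleaner route than the paper's. The paper applies Corollary~\ref{cor:transport_approx} forward at two \emph{different} base times $t_{i-1}$ and $t_i$, approximating the two finite differences by $V^\star_{t_{i-1}}\Y_{t_{i-1}}$ and $V^\star_{t_i}\Y_{t_i}$ respectively; since these two velocities do not cancel, the paper then has to compare them directly, which costs three further triangle-inequality steps (each contributing $R\delta_i$, $R\delta_i$, and $\tfrac14 R\delta_i$, for a total extra of $\tfrac94 R\delta$ on top of the $4R\delta$ from the two Corollary applications, giving $\tfrac{25}{4}R\delta$). You instead center both applications of Corollary~\ref{cor:transport_approx} at the \emph{same} base time $t_i$, once with $h=\delta_{i+1}$ and once with $h=-\delta_i$ (applied to $T_i^{-1}$, the Monge map from $\mus_{t_i}$ to $\mus_{t_{i-1}}$, as is legitimate for non-degenerate Gaussians or indeed any absolutely continuous pair). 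This makes both approximate finite differences equal to $\dotX_{t_i}$, which cancels identically in $\Delta_i$, eliminating the velocity-comparison step entirely and yielding the sharper constant $4R\delta$ in place of $\tfrac{25}{4}R\delta$. Two small points worth keeping in mind: Theorem~\ref{thm:transport_approx} and its corollary are indeed sign-agnostic in $h$ (the proofs only use $\delta = |h|$), so the backward application is legitimate; and the coupling $\Y_{t_i} = \X_{t_i}$ is harmless because every quantity in your computation is a deterministic function of $\Y_{t_i}$, so its $L^2(\Pr)$-norm depends only on the marginal law $\mus_{t_i}$.
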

\begin{proof}
    From Corollary~\ref{cor:transport_approx},
    \begin{align*}
        &\bigl\lVert \frac{\Y_{t_i} - \Y_{t_{i-1}}}{\delta_i} - V^\star_{t_{i-1}} \Y_{t_{i-1}} \bigr\rVert_{L^2(\Pr)} = \bigl\lVert \frac{T_i - I_d}{\delta_i} - V^\star_{t_{i-1}} \bigr\rVert_{\Sigma_{t_{i-1}}}
        \le 2R\delta_i,
    \end{align*}
    where we use the fact that $\Y_{t_{i-1}} \sim \mus_{t_{i-1}}$ and that $\Y_{t_i} = T_i \Y_{t_{i-1}}$.
    Similarly,
    \begin{align*}
        \bigl\lVert \frac{\Y_{t_{i+1}} - \Y_{t_i}}{\delta_{i+1}} - V^\star_{t_i} \Y_{t_i} \bigr\rVert_{L^2(\Pr)}
        &\le 2R \delta_{i+1}.
    \end{align*}
    Therefore,
    \begin{align*}
        \norm{\Delta_i}_{L^2(\Pr)}
        &\le 4R\delta + \norm{V_{t_i}^\star \Y_{t_i} - V_{t_{i-1}}^\star \Y_{t_{i-1}}}_{L^2(\Pr)}.
    \end{align*}
    Since $\Y_{t_i} = T_i \Y_{t_{i-1}}$, we replace $T_i$ by $I_d + \delta_i V_{t_{i-1}}^\star$.
    \begin{align*}
        &\norm{V_{t_i}^\star \Y_{t_i} - V_{t_{i-1}}^\star \Y_{t_{i-1}}}_{L^2(\Pr)} \\
        &\qquad{} \le \norm{V_{t_i}^\star (T_i - I_d - \delta_i V_{t_{i-1}}^\star) \Y_{t_{i-1}}}_{L^2(\Pr)} + \norm{V_{t_i}^\star (I_d + \delta_i V_{t_{i-1}}^\star) \Y_{t_{i-1}} - V^\star_{t_{i-1}} \Y_{t_{i-1}}}_{L^2(\Pr)}.
    \end{align*}
    We control the first term using Corollary~\ref{cor:transport_approx}:
    \begin{align*}
        \norm{V^\star_{t_i} (T_i - I_d - \delta_i V^\star_{t_{i-1}}) \Y_{t_{i-1}}}_{L^2(\Pr)} &\le \norm{V^\star_{t_i}}_{\rm F} \, \norm{(T_i - I_d - \delta_i V^\star_{t_{i-1}}) \Y_{t_{i-1}}}_{L^2(\Pr)} \\
        &\le \frac{L}{\sqrt{\lambda_{\min}}} \, \norm{T_i - I_d - \delta_i V^\star_{t_{i-1}}}_{\Sigma_{t_{i-1}}} \\
        &\le \frac{L}{\sqrt{\lambda_{\min}}} \cdot 2R \delta_i^2
        \le R \delta_i,
    \end{align*}
    where we used $\norm{V^\star_{t_i}}_{\rm F}
    \le \lambda_{\min}^{-1/2}\,\norm{V^\star_{t_i}}_{\Sigma_{t_i}}
    \le L\lambda_{\min}^{-1/2}$ by our Lipschitz assumption.
    Now for the second term.
    Introduce the random trajectory $(\X_t)$ sampled from the true curve $(\mus_t)$ with the Lagrangian coupling, and couple the process $(\Y_t)$ with $(\X_t)$ by setting $\Y_{t_{i-1}} = \X_{t_{i-1}}$.
    Thus,
    \begin{align*}
        &\norm{V^\star_{t_i} (I_d + \delta_i V^\star_{t_{i-1}}) \Y_{t_{i-1}} - V^\star_{t_{i-1}} \Y_{t_{i-1}}}_{L^2(\Pr)} \\
        &\qquad \le \norm{V^\star_{t_i} \X_{t_i} - V^\star_{t_{i-1}} \X_{t_{i-1}}}_{L^2(\Pr)}+ \norm{V^\star_{t_i} \{(I_d + \delta_i V^\star_{t_{i-1}}) \X_{t_{i-1}} - \X_{t_i}\}}_{L^2(\Pr)}.
    \end{align*}
    It is easy to control
    \begin{equation*}
        \norm{V^\star_{t_i} \X_{t_i} - V^\star_{t_{i-1}} \X_{t_{i-1}}}_{L^2(\Pr)} = \Bigl\lVert \int_{t_{i-1}}^{t_i} \ddotX_t \, \D t \Bigr\rVert_{L^2(\Pr)} \le R \delta_i.
    \end{equation*}
    Lastly,
    \begin{align*}
        \norm{V^\star_{t_i} \{(I_d + \delta_i V^\star_{t_{i-1}}) \X_{t_{i-1}} - \X_{t_i}\}}_{L^2(\Pr)} &\le \norm{V^\star_{t_i}}_{\rm F} \, \norm{\X_{t_i} - \X_{t_{i-1}} - \delta_i V^\star_{t_{i-1}} \X_{t_{i-1}}}_{L^2(\Pr)} \\
        &\le \frac{L}{\sqrt{\lambda_{\min}}} \, \Bigl\lVert \int_{t_{i-1}}^{t_i} \int_{t_{i-1}}^t \ddotX_s \, \D s \, \D t \Bigr\rVert_{L^2(\Pr)} \\
        & \le \frac{L}{\sqrt{\lambda_{\min}}} \cdot \frac{R \delta_i^2}{2}
        \le \frac{R\delta_i}{4}.
    \end{align*}
    Putting it all together, we obtain
    \begin{align*}
        \norm{\Delta_i}_{L^2(\Pr)}
        &\le \frac{25}{4} R\delta. \placeqedmm
    \end{align*}
\end{proof}

To match notation with Appendix~\ref{appendix:cubic_splines}, we set
\begin{align*}
    M_i
    &:= \ddot \Y_{t_{i-1}}, \qquad i \in [N+1].
\end{align*}

\begin{lem}\label{lem:M_control}
Assume $\delta \le \sqrt{\lambda_{\min}}/(2L)$.
It holds that
\begin{align*}
    \norm{M_i}_{L^2(\Pr)}
    &\le \frac{75 {(1+\alpha)}^2}{4\alpha^3} \,R.
\end{align*}
\end{lem}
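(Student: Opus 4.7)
The plan is to invoke the classical tridiagonal linear system that characterizes the second derivatives of a natural cubic spline at its knots; its derivation is the content of Appendix~\ref{appendix:cubic_splines}, from which I draw the defining relations. With the natural boundary conditions $M_1 = M_{N+1} = 0$, the interior second derivatives satisfy equations of the form
\begin{equation*}
    \delta_j M_j + 2(\delta_j + \delta_{j+1}) M_{j+1} + \delta_{j+1} M_{j+2} = 6\Delta_j, \qquad j = 1,\dotsc,N-1,
\end{equation*}
and the mesh hypothesis $\alpha\delta \le \delta_j \le \delta$ from~\eqref{eq:mesh_condition} ensures this is a strictly diagonally dominant tridiagonal system: the diagonal coefficient is $2(\delta_j + \delta_{j+1})$ while the two off-diagonals sum to $\delta_j + \delta_{j+1}$, leaving a gap of $\delta_j + \delta_{j+1} \ge 2\alpha\delta$.

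My approach is to exploit this diagonal dominance directly in the $L^2(\Pr)$ norm. I would first select the index $i^\star$ maximizing $\norm{M_{i^\star}}_{L^2(\Pr)}$. Taking $L^2(\Pr)$ norms in the tridiagonal equation at $i^\star$ (the two boundary rows are handled identically since $M_1 = M_{N+1} = 0$ simply removes the corresponding term) and applying the triangle inequality, together with the defining property of $i^\star$, yields
\begin{equation*}
    (\delta_{i^\star-1} + \delta_{i^\star}) \norm{M_{i^\star}}_{L^2(\Pr)} \le 6\norm{\Delta_{i^\star-1}}_{L^2(\Pr)}.
\end{equation*}
The lower bound $\delta_j \ge \alpha\delta$ and Proposition~\ref{prop:Delta_control}, which gives $\norm{\Delta_j}_{L^2(\Pr)} \le \tfrac{25}{4} R\delta$, then combine to produce a bound on $\max_i \norm{M_i}_{L^2(\Pr)}$ of order $R/\alpha$, independent of $\delta$ and $N$.

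The main obstacle is tracking constants carefully enough to match the stated factor $\tfrac{75(1+\alpha)^2}{4\alpha^3}$. The one-shot argument sketched above actually produces $\max_i \norm{M_i}_{L^2(\Pr)} \le \tfrac{75}{4\alpha}\,R$, which is tighter in the small-$\alpha$ regime, so the stated constant is likely either suboptimal or obtained by a different route. A natural alternative that introduces the factors $(1+\alpha)^2$ and an extra power of $1/\alpha$ is to first normalize each interior row to the form $\mu_j M_j + 2 M_{j+1} + \lambda_j M_{j+2} = d_j$ with $\mu_j,\lambda_j \in [\alpha/(1+\alpha), 1/(1+\alpha)]$ and $d_j = 6\Delta_j/(\delta_j + \delta_{j+1})$, and then iterate rather than use the sup-index trick. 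Either way, the essential content of the lemma --- that the tridiagonal system has inverse with $L^\infty \to L^\infty$ operator norm bounded independently of the mesh, so that $\norm{M_i}_{L^2(\Pr)} = O(R)$ uniformly in $\delta$ and $N$ --- is exactly what is needed for the subsequent curvature control of the interpolating process.
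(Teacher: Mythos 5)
Your proof is correct and takes a genuinely different route from the paper. The paper proceeds by computing an entrywise bound on $\mb T^{-1}$ (Lemma~\ref{lem:T_inv}): it writes $\mb T = \mb D + \mb B$, expands ${(\mb D + \mb B)}^{-1}$ in a Neumann series, bounds powers of the normalized off-diagonal part via the path-counting interpretation of the adjacency matrix, and arrives at $\abs{{(\mb T^{-1})}_{i,j}} \lesssim \alpha^{-2}\delta^{-1}{(1+\alpha)}^{-\abs{i-j}+1}$; summing this geometric decay against Proposition~\ref{prop:Delta_control} produces the stated constant $\tfrac{75(1+\alpha)^2}{4\alpha^3}$. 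Your argument bypasses the inverse entirely: you observe strict diagonal dominance of the system, select the index $i^\star$ maximizing $\norm{M_{i^\star}}_{L^2(\Pr)}$, and apply a discrete-maximum-principle step at the corresponding row (here indexed by $i^\star - 1$, since $M_{i^\star}$ sits on the diagonal of that row). This yields $(\delta_{i^\star-1}+\delta_{i^\star})\norm{M_{i^\star}}_{L^2(\Pr)} \le 6\norm{\Delta_{i^\star-1}}_{L^2(\Pr)}$, hence $\max_i\norm{M_i}_{L^2(\Pr)} \le \tfrac{75}{4\alpha}R$. Since $\alpha \le 1$ forces $(1+\alpha)^2/\alpha^2 \ge 1$, your bound is strictly sharper than the stated one and therefore implies the lemma; it also propagates cleanly to Propositions~\ref{prop:velocity_control} and~\ref{prop:curv_transport_spline} and would improve the constant in Theorem~\ref{thm:approx_thm}. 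The trade-off is that the paper's Neumann-series route gives more refined information --- entrywise decay of $\mb T^{-1}$ --- whereas yours gives only the $\ell^\infty \to \ell^\infty$ operator bound, but for the purpose of this lemma only the latter is needed, so your approach is both simpler and tighter. (One small note: the displayed identity in Appendix~\ref{appendix:cubic_splines} has a typo, $\delta_{i+2}m_{i+2}$ should read $\delta_{i+1}m_{i+2}$, as is clear from the matrix $\mb T$ itself; your version of the tridiagonal relation is the correct one.)
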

\begin{proof}
    As described in Appendix~\ref{appendix:cubic_splines}, we know that $M = 6\mb T^{-1} \Delta$, where the entries of $\mb T^{-1}$ are bounded in Lemma~\ref{lem:T_inv}.
    Thus,
    \begin{align*}
        \norm{M_i}_{L^2(\Pr)}
        &= 6\,\Bigl\lVert \sum_{j=1}^{N-1} {(\mb T^{-1})}_{i,j} \Delta_j \Bigr\rVert_{L^2(\Pr)} \\
        &\le 6\sum_{j=1}^{N-1} \abs{{(\mb T^{-1})}_{i,j}}\, \norm{\Delta_j}_{L^2(\Pr)} \\
        &\le 6\sum_{j=1}^{N-1} \frac{1}{4\alpha^2 \delta} \, \frac{1}{{(1+\alpha)}^{\abs{i-j}-1}} \, \frac{25}{4} R\delta \\
        &\le \frac{75R}{4\alpha^2} \sum_{k=0}^\infty \frac{1}{{(1+\alpha)}^{k-1}}
        = \frac{75 {(1+\alpha)}^2}{4\alpha^3} \,R,
    \end{align*}
    where we use Proposition~\ref{prop:Delta_control}. \placeqed
\end{proof}

Finally, we are ready to state our control on the velocity of the trajectory ${(\Y_t)}$.

\begin{prop}\label{prop:velocity_control}
Assume $\delta \le \sqrt{\lambda_{\min}}/(2L)$.
Then,
\begin{align*}
    \norm{\dot \Y_{t_{i-1}} - \dotX_{t_{i-1}}}_{L^2(\Pr)}
    &\le \frac{16\alpha^3 + 75 {(1+\alpha)}^2}{8\alpha^3} \, R\delta.
\end{align*}
\end{prop}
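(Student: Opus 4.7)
The plan is to combine the standard closed-form representation of a cubic spline's derivative at a knot with the two main estimates already established: the first-order transport approximation from Corollary~\ref{cor:transport_approx} and the uniform bound on the $M_i$ from Lemma~\ref{lem:M_control}.

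First I would recall (from the linear system derivation in Appendix~\ref{appendix:cubic_splines}) that on the subinterval $[t_{i-1}, t_i]$ the natural cubic spline admits the Hermite-type expression
\begin{align*}
    \dot \Y_{t_{i-1}}
    = \frac{\Y_{t_i} - \Y_{t_{i-1}}}{\delta_i} - \frac{\delta_i}{6}\,(2M_i + M_{i+1}).
\end{align*}
Since the process $(\Y_t)$ can be coupled to the Lagrangian process $(\X_t)$ with $\Y_{t_{i-1}} = \X_{t_{i-1}}$, we have $\dotX_{t_{i-1}} = V^\star_{t_{i-1}} \X_{t_{i-1}} = V^\star_{t_{i-1}} \Y_{t_{i-1}}$, so
\begin{align*}
    \dot \Y_{t_{i-1}} - \dotX_{t_{i-1}}
    = \Bigl(\frac{\Y_{t_i} - \Y_{t_{i-1}}}{\delta_i} - V^\star_{t_{i-1}} \Y_{t_{i-1}}\Bigr) - \frac{\delta_i}{6}\,(2M_i + M_{i+1}).
\end{align*}

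Next I would bound the two terms separately. The first term is precisely the quantity controlled by Corollary~\ref{cor:transport_approx} (applied with $k = 1$, $c = 1/2$, and noting that $\Y_{t_i} = T_i \Y_{t_{i-1}}$), which yields $2R\delta_i \le 2R\delta$ in $L^2(\Pr)$. For the second term I would apply the triangle inequality and invoke Lemma~\ref{lem:M_control} to obtain
\begin{align*}
    \Bigl\lVert \frac{\delta_i}{6}\,(2M_i + M_{i+1}) \Bigr\rVert_{L^2(\Pr)}
    \le \frac{\delta}{6}\,(2\norm{M_i}_{L^2(\Pr)} + \norm{M_{i+1}}_{L^2(\Pr)})
    \le \frac{\delta}{2}\cdot\frac{75{(1+\alpha)}^2}{4\alpha^3}\, R
    = \frac{75{(1+\alpha)}^2}{8\alpha^3}\, R\delta.
\end{align*}
Summing the two bounds gives
\begin{align*}
    \norm{\dot \Y_{t_{i-1}} - \dotX_{t_{i-1}}}_{L^2(\Pr)}
    \le 2R\delta + \frac{75{(1+\alpha)}^2}{8\alpha^3}\, R\delta
    = \frac{16\alpha^3 + 75{(1+\alpha)}^2}{8\alpha^3}\, R\delta,
\end{align*}
which is exactly the claimed inequality. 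The boundary case $i-1 = N$ is handled by using the analogous right-endpoint formula $\dot \Y_{t_N} = (\Y_{t_N} - \Y_{t_{N-1}})/\delta_N + (\delta_N/6)(M_N + 2M_{N+1})$ on the interval $[t_{N-1}, t_N]$; the same two estimates go through unchanged.

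There is no real obstacle here; the heavy lifting has been done in the preceding lemmas. The only conceptual step is the recognition that in the cubic-spline derivative formula the dominant $(\Y_{t_i} - \Y_{t_{i-1}})/\delta_i$ term is precisely the first-order Monge approximation to the true velocity $V^\star_{t_{i-1}}\Y_{t_{i-1}}$, while the $O(\delta)$ correction is controlled through the previously established uniform bound on the spline's second derivatives at the knots.
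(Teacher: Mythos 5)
Your proof is correct and follows essentially the same route as the paper's: both start from the Hermite-type formula for $\dot\Y_{t_{i-1}}$, bound the $(2M_i + M_{i+1})\,\delta_i/6$ correction via Lemma~\ref{lem:M_control}, bound the divided-difference against the true velocity via Corollary~\ref{cor:transport_approx} (with $k=1$, $c=1/2$), and add. The only cosmetic difference is that the paper organizes the same two estimates as a triangle inequality through the intermediate quantity $(T_i\Y_{t_{i-1}} - \Y_{t_{i-1}})/\delta_i$, whereas you write out the exact decomposition first; your remark about the right-endpoint $t_N$ is superfluous since the statement only concerns $t_{i-1}$ for $i\le N$.
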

\begin{proof}
    It holds that
    \begin{align*}
        \dot \Y_{t_{i-1}}
        &= \frac{\Y_{t_i} - \Y_{t_{i-1}}}{\delta_i} - \frac{M_{i+1}+2M_i}{6} \, \delta_i
    \end{align*}
    (see Appendix~\ref{appendix:cubic_splines}).
    Therefore,
    \begin{align*}
        \bigl\lVert \dot \Y_{t_{i-1}} - \frac{\Y_{t_i} - \Y_{t_{i-1}}}{\delta_i} \bigr\rVert_{L^2(\Pr)}  & \le \frac{\norm{M_{i+1}}_{L^2(\Pr)} + 2\norm{M_i}_{L^2(\Pr)}}{6} \, \delta \le \frac{75 {(1+\alpha)}^2}{8\alpha^3} \,R\delta,
    \end{align*}
    by Lemma~\ref{lem:M_control}.
    Next, we recall that $\Y_{t_i} = T_i \Y_{t_{i-1}}$, and that $(\Y_t)$ and $(\X_t)$ are coupled so that $\Y_{t_{i-1}} = \X_{t_{i-1}}$.
    Thus,
    \begin{align*}
        \norm{\dot \Y_{t_{i-1}} - \dotX_{t_{i-1}}}_{L^2(\Pr)} &\le \bigl\lVert \dot \Y_{t_{i-1}} - \frac{T_i \Y_{t_{i-1}} - \Y_{t_{i-1}}}{\delta_i} \bigr\rVert_{L^2(\Pr)} + \bigl\lVert \dotX_{t_{i-1}} - \frac{T_i \X_{t_{i-1}} - \X_{t_{i-1}}}{\delta_i} \bigr\rVert_{L^2(\Pr)} \\
        &\le \frac{75 {(1+\alpha)}^2}{8\alpha^3} \,R\delta + 2R\delta,
    \end{align*}
    where we invoke Corollary~\ref{cor:transport_approx} again. \placeqed
\end{proof}

\subsection{Curvature of the Transport Spline}

Next, we must bound the curvature of $(\Y_t)$, but this is an easy task given what we have established so far.

\begin{prop}\label{prop:curv_transport_spline}
Assume $\delta \le \sqrt{\lambda_{\min}}/(2L)$.
Then,
\begin{align*}
    \sup_{t\in [0,1]}{\norm{\ddot \Y_t}_{L^2(\Pr)}}
    &\le\frac{75 {(1+\alpha)}^2}{4\alpha^3} \,R.
\end{align*}
\end{prop}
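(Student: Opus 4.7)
The plan is short because the proposition reduces immediately to the bound on $\|M_i\|_{L^2(\Pr)}$ already established in Lemma~\ref{lem:M_control}. The key observation is that since ${(\Y_t)}$ is a natural cubic spline through the knots $\Y_{t_0},\dotsc,\Y_{t_N}$, its second derivative $t \mapsto \ddot\Y_t$ is piecewise \emph{linear}. Concretely, on each subinterval $[t_{i-1}, t_i]$ one has the explicit representation (derived in Appendix~\ref{appendix:cubic_splines})
\begin{align*}
    \ddot\Y_t &= \frac{t_i - t}{\delta_i}\, M_i + \frac{t - t_{i-1}}{\delta_i}\, M_{i+1},
\end{align*}
so that $\ddot\Y_t$ is a convex combination of the two endpoint accelerations $M_i = \ddot\Y_{t_{i-1}}$ and $M_{i+1} = \ddot\Y_{t_i}$.

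From here I would apply the triangle inequality in $L^2(\Pr)$ to obtain, for $t \in [t_{i-1}, t_i]$,
\begin{align*}
    \norm{\ddot\Y_t}_{L^2(\Pr)}
    &\le \frac{t_i - t}{\delta_i}\, \norm{M_i}_{L^2(\Pr)} + \frac{t - t_{i-1}}{\delta_i}\, \norm{M_{i+1}}_{L^2(\Pr)} \le \max_{j} \norm{M_j}_{L^2(\Pr)}.
\end{align*}
Taking the supremum over $t \in [0,1]$ and invoking Lemma~\ref{lem:M_control} yields the stated bound $\tfrac{75(1+\alpha)^2}{4\alpha^3}\, R$. The natural boundary conditions $M_1 = M_{N+1} = 0$ only improve the inequality at the extreme subintervals, so no separate argument is needed there.

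There is really no obstacle: all the work has already been done in establishing Lemma~\ref{lem:M_control} (which in turn relied on the control of the second differences $\Delta_i$ from Proposition~\ref{prop:Delta_control} and the inverse-tridiagonal estimate of Lemma~\ref{lem:T_inv}). The only thing worth double-checking is the piecewise-linear formula for $\ddot\Y_t$, which is a standard fact about interpolating cubic splines and is recorded in Appendix~\ref{appendix:cubic_splines}.
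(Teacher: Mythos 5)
Your argument is correct and coincides with the paper's proof: both observe that $\ddot\Y_t$ is piecewise linear so its $L^2(\Pr)$-norm is maximized at a knot, write $\ddot\Y_t$ as a convex combination of $M_i$ and $M_{i+1}$, apply the triangle inequality, and invoke Lemma~\ref{lem:M_control}. Nothing to add.
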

\begin{proof}
    Indeed, $t\mapsto \ddot \Y_t$ is a piecewise linear function (see Appendix~\ref{appendix:cubic_splines}), so it is maximized at the knots.
    For $t\in [t_{i-1}, t_i]$, it follows that
    \begin{align*}
        \norm{\ddot \Y_t}_{L^2(\Pr)}
        &= \bigl\lVert \frac{t_i - t}{\delta_i} \, \ddot \Y_{t_{i-1}} + \frac{t-t_{i-1}}{\delta_i} \, \ddot \Y_{t_i} \bigr\rVert_{L^2(\Pr)} \\
        &\le \frac{t_i - t}{\delta_i} \, \norm{\ddot \Y_{t_{i-1}}}_{L^2(\Pr)} + \frac{t-t_{i-1}}{\delta_i} \, \norm{\ddot \Y_{t_i}}_{L^2(\Pr)} \\
        &\le \norm{\ddot \Y_{t_{i-1}}}_{L^2(\Pr)} \vee \norm{\ddot \Y_{t_i}}_{L^2(\Pr)} \\
        &= \norm{M_i}_{L^2(\Pr)} \vee \norm{M_{i+1}}_{L^2(\Pr)} \\
        &\le \frac{75 {(1+\alpha)}^2}{4\alpha^3} \,R,
    \end{align*}
    by Lemma~\ref{lem:M_control}. \placeqed
\end{proof}

\subsection{Proof of the Main Theorem}\label{appendix:finally_the_proof}

\begin{proof}[Proof of Theorem~\ref{thm:approx_thm}]
    Let $t \in [t_{i-1}, t_i]$, and let the processes $(\Y_t)$ and $(\X_t)$ be coupled with $\Y_{t_{i-1}} = \X_{t_{i-1}}$.
    Then,
    \begin{align*}
        \norm{\Y_t - \X_t}_{L^2(\Pr)}
        &\le \delta_i \, \norm{\dot \Y_{t_{i-1}} - \dotX_{t_{i-1}}}_{L^2(\Pr)} + \Bigl\lVert \int_{t_{i-1}}^{t_i} \int_{t_{i-1}}^t (\ddot \Y_s - \ddotX_s) \, \D s \, \D t \Bigr\rVert_{L^2(\Pr)} \\
        &\le \frac{16\alpha^3 + 75 {(1+\alpha)}^2}{8\alpha^3} \, R\delta^2 + \frac{\delta^2}{2} \sup_{t\in [0,1]}{(\norm{\ddot \Y_t}_{L^2(\Pr)} +\norm{\ddotX_t}_{L^2(\Pr)})} \\
        &\le \frac{10\alpha^3 + 75 {(1+\alpha)}^2}{4\alpha^3} \, R\delta^2
        \le \frac{115}{2\alpha^3} \, R\delta^2,
    \end{align*}
    where we have used Proposition~\ref{prop:velocity_control} and Proposition~\ref{prop:curv_transport_spline}. \placeqed
\end{proof}

\subsection{Piecewise Geodesic Interpolation}\label{appendix:piecewise_geodesic}

In this section, we study the approximation error of piecewise geodesic interpolation. Namely, we define a stochastic process, still denoted $(\Y_t)$, as follows.
\begin{enumerate}
    \item Draw $\Y_{t_0} \sim \mu_{t_0}$.
    \item For $i=1,\dotsc,N$, set $\Y_{t_i} := T_i(\Y_{t_{i-1}})$.
    \item We join the points $\Y_{t_0}, \Y_{t_1},\dotsc,\Y_{t_N}$ via straight lines. Namely, for $t \in [t_{i-1}, t_i]$ we set
    \begin{align*}
        \Y_t
        &= \frac{t_i - t}{t_i - t_{i-1}} \, \Y_{t_{i-1}} + \frac{t-t_{i-1}}{t_i - t_{i-1}} \, \Y_{t_i}.
    \end{align*}
\end{enumerate}
Let $\mu_t$ denote the law of $\Y_t$.

\begin{thm}\label{thm:piecewise_geodesic_approx}
Let the notation and assumptions of Theorem~\ref{thm:approx_thm} hold (except for the definition of $(\mu_t)$).
Then,
\begin{align*}
    \sup_{t\in [0,1]} W_2(\mu_t,\mus_t)
    &\le \frac{5}{2} R\delta^2.
\end{align*}
\end{thm}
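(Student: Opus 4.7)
The plan is to imitate the strategy used for Theorem~\ref{thm:approx_thm}, but the argument is substantially simpler because piecewise geodesic interpolation is determined \emph{locally} on each interval, so we avoid invoking the machinery of Appendix~\ref{appendix:cubic_splines} (the tridiagonal system and the $M_i$ bounds). In particular, there is no spline coupling of the acceleration across intervals to control.

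Fix $i\in [N]$ and $t\in [t_{i-1},t_i]$. Couple the piecewise geodesic process ${(\Y_t)}$ with the Lagrangian coupling ${(\X_t)}$ by setting $\Y_{t_{i-1}} = \X_{t_{i-1}}$. On $[t_{i-1},t_i]$, the trajectory ${(\Y_t)}$ is a straight line, so $\ddot \Y_t = 0$ and $\dot \Y_t = (\Y_{t_i}-\Y_{t_{i-1}})/\delta_i = (T_i\X_{t_{i-1}} - \X_{t_{i-1}})/\delta_i$ is constant in $t$. Writing the difference as
\begin{align*}
    \Y_t - \X_t = (t-t_{i-1})\,(\dot \Y_{t_{i-1}} - \dotX_{t_{i-1}}) - \int_{t_{i-1}}^t \int_{t_{i-1}}^s \ddotX_r \, \D r \, \D s,
\end{align*}
the triangle inequality reduces the problem to (i) controlling the velocity discrepancy at $t_{i-1}$, and (ii) controlling the integrated curvature term.

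For step (i), Corollary~\ref{cor:transport_approx} applied with $k=1$ (which is applicable since $\delta \le \sqrt{\lambda_{\min}}/(2L)$ corresponds to $c=1/2$) gives
\begin{align*}
    \bigl\lVert \dot \Y_{t_{i-1}} - \dotX_{t_{i-1}} \bigr\rVert_{L^2(\Pr)} = \Bigl\lVert \frac{T_i - I_d}{\delta_i} - V^\star_{t_{i-1}} \Bigr\rVert_{\Sigma_{t_{i-1}}} \le 2R\delta_i,
\end{align*}
exactly as already used in the proof of Proposition~\ref{prop:Delta_control}. For step (ii), the double integral is bounded in $L^2(\Pr)$ by $\tfrac{1}{2}(t-t_{i-1})^2 \sup_s \norm{\ddotX_s}_{L^2(\Pr)} \le \tfrac{1}{2}R\delta_i^2$.

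Combining, $\norm{\Y_t - \X_t}_{L^2(\Pr)} \le \delta_i \cdot 2R\delta_i + \tfrac{1}{2}R\delta_i^2 \le \tfrac{5}{2}R\delta^2$, and since $W_2(\mu_t,\mus_t) \le \norm{\Y_t - \X_t}_{L^2(\Pr)}$, taking the supremum over $t$ yields the claim. There is no real obstacle here: the entire proof rides on Corollary~\ref{cor:transport_approx} (whose proof is the technical heart of the appendix) combined with the trivial observation that straight lines have zero acceleration, which is why the constant $5/2$ is so clean compared to the $58/\alpha^3$ appearing in Theorem~\ref{thm:approx_thm}.
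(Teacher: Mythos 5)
Your proof is correct and follows essentially the same approach as the paper: couple $\Y_{t_{i-1}}=\X_{t_{i-1}}$, use Corollary~\ref{cor:transport_approx} with $k=1$ and $c=1/2$ to control the velocity mismatch at $t_{i-1}$ by $2R\delta_i$, and exploit $\ddot\Y_t\equiv 0$ to bound the remainder by a Taylor integral of $\ddot\X$. The accounting and the resulting constant $5/2$ match the paper exactly.
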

\begin{proof}
    As in Appendix~\ref{appendix:finally_the_proof}, we have
    \begin{align*}
        \norm{\Y_t - \X_t}_{L^2(\Pr)}
        &\le \delta_i \, \norm{\dot \Y_{t_{i-1}} - \dotX_{t_{i-1}^{\tiny +}}}_{L^2(\Pr)}  + \Bigl\lVert \int_{t_{i-1}}^{t_i} \int_{t_{i-1}}^t \ddotX_s \, \D s \, \D t \Bigr\rVert_{L^2(\Pr)} \\
        &\le 2R\delta^2 + \frac{1}{2} R\delta^2.
    \end{align*}
    Here, we use several facts: (1) $\dot \Y_{t_{i-1}^{\tiny +}}$, the derivative of $(\Y_t)_t$ at $t_{i-1}$ from the right, equals \[ (T_i - I_d) \Y_{t_{i-1}} = (T_i - I_d) \X_{t_{i-1}}, \] and so we can apply Corollary~\ref{cor:transport_approx}; (2) the curve $(\Y_t)$, consisting of piecewise straight lines, has no acceleration.
    This finishes the proof. \placeqed
\end{proof}

Formally, Theorem~\ref{thm:piecewise_geodesic_approx} is a slightly better approximation guarantee than Theorem~\ref{thm:approx_thm}. Theorem~\ref{thm:piecewise_geodesic_approx} can also be strengthened asymptotically to
\begin{align*}
    \limsup_{\delta\searrow 0} \frac{1}{\delta^2} \sup_{t\in [0,1]} W_2(\mu_{\delta,t}, \mus_t) \le R,
\end{align*}
as in Section~\ref{scn:approx_guarantee}.
Of course, we do not advocate for using piecewise geodesic interpolation because it is unsuitable for trajectory estimation (see Figure~\ref{fig:sawtooth}).

\section{\allcaps{Natural Cubic Splines}}\label{appendix:cubic_splines}

For the reader's convenience and to make the paper more self-contained, in this section we present a derivation of natural cubic splines and some of their properties.
The results obtained here are used in Appendix~\ref{appendix:approx} for the proof of the main approximation result (Theorem~\ref{thm:approx_thm}).

We are given times $0 = t_0 < t_1 < \cdots < t_N = 1$ and corresponding points $(x_{t_0}, x_{t_1},\dotsc,x_{t_N})$ in $\R^d$.
Our goal is to construct a piecewise cubic polynomial interpolation $y : [0,1] \to \R^d$ which is $\mc C^2$ smooth.

We parametrize $y$ in the following way: for each $i \in [N]$ and for $t \in [t_{i-1}, t_i]$, we set $y(t) = y_i(t)$, where
\begin{align*}
    y_i(t)
    = a_i \, {(t-t_{i-1})}^3 &+ b_i \, {(t-t_{i-1})}^2  + c_i \, (t-t_{i-1}) + d_i.
\end{align*}

Computing derivatives,
\begin{align*}
    x_{t_{i-1}}
    = y_i(t_{i-1})
    &= d_i, \\
    x_{t_i}
    = y_i(t_i)
    &= a_i \delta_i^3 + \frac{m_i}{2} \delta_i^2 + c_i \delta_i + d_i, \\
    \dot y_i(t_{i-1})
    &= c_i, \\
    \dot y_i(t_i)
    &= 3a_i \delta_i^2 + m_i \delta_i + c_i, \\
    \ddot y_i(t_{i-1})
    &= m_i, \\
    \ddot y_i(t_i)
    &= 6a_i \delta_i + m_i,
\end{align*}
where define $\delta_i := t_i - t_{i-1}$ and $m_i := 2b_i$ (and anticipating the natural boundary condition, which asserts $\ddot y(0) = \ddot y(1) = 0$, we make the convention $m_{N+1} := 0$).
Using continuity of the first and second derivatives of $y$ at the knots, we solve for the coefficients of the polynomial $y_i$ in terms of the variables $m$ and $x$:
\begin{align*}
    a_i
    &= \frac{m_{i+1} - m_i}{6\delta_i}, \\
    b_i
    &= \frac{m_i}{2}, \\
    c_i
    &= \frac{x_{t_i} - x_{t_{i-1}}}{\delta_i} - \frac{m_{i+1} + 2m_i}{6} \, \delta_i, \\
    d_i
    &= x_{t_{i-1}}.
\end{align*}
Therefore, it suffices to work with the variables $m$.

If we plug these equations back into the continuity condition for the first derivative at the knot, after some algebra we obtain the equations
\begin{align*}
    6\Delta_i
    &= \delta_i m_i + 2(\delta_i + \delta_{i+1}) m_{i+1} + \delta_{i+2} m_{i+2}, \qquad i = 1,\dotsc, N-1,
\end{align*}
where we have defined the quantities
\begin{align*}
    \Delta_i
    &:= \frac{x_{t_{i+1}} - x_{t_i}}{\delta_{i+1}} - \frac{x_{t_i} - x_{t_{i-1}}}{\delta_i},
\end{align*}
a proxy for the second derivative of the data points.

We can express these equations in matrix form (including also the natural boundary condition $m_1 = 0$):
\begin{align*}
    \underbrace{\begin{bmatrix}
    2(\delta_1 + \delta_2) & \delta_2 && \\
    \delta_2 & \ddots & \ddots & \\
    & \ddots & \ddots & \delta_{N-1} \\
    && \delta_{N-1} & 2(\delta_{N-1} + \delta_N)
    \end{bmatrix}}_{:= \mb T} m
    = 6\Delta.
\end{align*}

The matrix $\mb T$ above is a symmetric tridiagonal matrix of size $N-1$.\footnote{To be precise, we should write this as the block matrix equation $(\mb T\otimes I_d) m = 6\Delta$.}
To obtain bounds on $m$, we will study the inverse $\mb T^{-1}$ of $\mb T$.

\begin{lem}\label{lem:T_inv}
Assume that for each $i\in [N]$, we have
$\alpha \delta \le t_i - t_{i-1} \le \delta.$
Then, we have the entrywise bound
\begin{align*}
    \abs{{(\mb T^{-1})}_{i,j}}
    &\le \frac{1}{4\alpha^2 \,{(1+\alpha)}^{\abs{i-j}-1}} \, \frac{1}{\delta}, \qquad i,j \in [N-1].
\end{align*}
\end{lem}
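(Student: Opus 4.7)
The plan is to exploit the Cholesky-type $LDL^\top$ factorization of $\mb T$. Writing $\mb T = L D L^\top$ with $L$ unit lower bidiagonal (subdiagonal entries $\ell_i$) and $D$ diagonal (entries $d_i$), the standard recurrences give
\begin{align*}
    d_1 = a_1 = 2(\delta_1+\delta_2), \qquad \ell_i = b_{i-1}/d_{i-1} = \delta_i/d_{i-1}, \qquad d_i = 2(\delta_i+\delta_{i+1}) - \delta_i^2/d_{i-1}.
\end{align*}
The decay rate $1/(1+\alpha)$ in the statement will come from showing $|\ell_i| \le 1/(1+\alpha)$, and the prefactor $1/(4\alpha^2\delta)$ will come from a geometric-series estimate of the entries of $\mb T^{-1}$ combined with a lower bound on the $d_i$.

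The key lemma, proved by induction on $i$, will be the pivot estimate $d_i \ge \delta_i + (1+\alpha)\delta_{i+1}$. For the base case, $d_1 = 2(\delta_1+\delta_2) \ge \delta_1 + (1+\alpha)\delta_2$ since $\alpha \le 1$. For the inductive step, the hypothesis implies $d_{i-1} \ge (1+\alpha)\delta_i$, so $\delta_i^2/d_{i-1} \le \delta_i/(1+\alpha)$, and substituting into the recurrence, the inequality reduces to $\delta_i\,\alpha/(1+\alpha) + (1-\alpha)\delta_{i+1} \ge 0$, which again holds because $\alpha \le 1$. Two corollaries follow immediately: $|\ell_i| = \delta_i/d_{i-1} \le 1/(1+\alpha)$, and (using $\delta_i,\delta_{i+1}\ge \alpha\delta$) $d_i \ge \alpha(2+\alpha)\,\delta$.

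With these in hand, I use the closed form $(L^{-1})_{ki} = (-1)^{k-i}\ell_{i+1}\cdots\ell_k$ for $k \ge i$, which gives $|(L^{-1})_{ki}| \le (1+\alpha)^{-(k-i)}$. Then for $i \le j$,
\begin{align*}
    |(\mb T^{-1})_{ij}|
    = \Bigl|\sum_{k \ge j} (L^{-1})_{ki}\,d_k^{-1}\,(L^{-1})_{kj}\Bigr|
    \le \frac{1}{\alpha(2+\alpha)\,\delta}\sum_{k\ge j}(1+\alpha)^{-(2k-i-j)}
    = \frac{(1+\alpha)^2}{\alpha^2(2+\alpha)^2\,\delta}\,(1+\alpha)^{-(j-i)},
\end{align*}
where I have summed the geometric series using $1-(1+\alpha)^{-2} = \alpha(2+\alpha)/(1+\alpha)^2$. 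The remaining step is to check $(1+\alpha)^2/(2+\alpha)^2 \le (1+\alpha)/4$, i.e., $4(1+\alpha) \le (2+\alpha)^2$, which is precisely $\alpha^2 \ge 0$. This gives the claimed bound, and symmetry of $\mb T^{-1}$ covers the case $i > j$.

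The main obstacle is tracking the constants so that they match the statement exactly: a naive diagonal-dominance estimate yields decay rate $1/2$ but the wrong prefactor, while the bound $d_i \ge (1+\alpha)\delta_{i+1}$ alone is too weak for $d_{\min}$. It is the sharper induction hypothesis $d_i \ge \delta_i + (1+\alpha)\delta_{i+1}$ that simultaneously delivers both the rate $1/(1+\alpha)$ through $\ell_i$ and the lower bound $d_{\min} \ge \alpha(2+\alpha)\delta$, and then the trivial algebraic identity $4(1+\alpha)\le(2+\alpha)^2$ provides exactly the factor of $4$ in the denominator.
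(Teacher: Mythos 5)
Your proof is correct, and it takes a genuinely different route from the paper. The paper splits $\mb T = \mb B + \mb D$ into its off-diagonal and diagonal parts, preconditions symmetrically to form $\mb M = \mb D^{-1/2}\mb B\mb D^{-1/2}$, expands $\mb T^{-1}$ in a Neumann series, and bounds $(\mb M^k)_{i,j}$ by comparing with powers of the adjacency matrix of the path graph (so the exponential decay in $|i-j|$ comes from the fact that a path of length $k<|i-j|$ cannot connect $i$ to $j$, and the $2^k$ crude path count is absorbed by the factor $1/\{2(1+\alpha)\}^k$). You instead compute the $LDL^\top$ factorization, prove the sharp pivot estimate $d_i \ge \delta_i + (1+\alpha)\delta_{i+1}$ by induction, and read off the inverse of the bidiagonal factor explicitly; the decay rate $1/(1+\alpha)$ then comes from $|\ell_i| \le 1/(1+\alpha)$ and the prefactor from $d_i \ge \alpha(2+\alpha)\delta$ plus a geometric series. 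Your factorization approach is more in the spirit of classical tridiagonal solvers and, as you note, it actually yields the marginally sharper constant $(1+\alpha)^2/\{\alpha^2(2+\alpha)^2\}$, which you then round up via $4(1+\alpha)\le(2+\alpha)^2$ to match the stated bound exactly; the paper's Neumann-series argument is shorter to write down but the path-counting bound $(\mb A^k)_{ij}\le 2^k\mathbb{1}_{|i-j|\le k}$ is looser, and the two factors of $2$ happen to cancel. Both arguments use $\alpha\le 1$ (implied by the mesh condition), and in both cases the infinite sum is only an upper bound on the true finite sum over $k \le N-1$, which is harmless.
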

\begin{proof}
    We write $\mb T = \mb B + \mb D$, where
    \begin{align*}
        \mb B
        &:= \begin{bmatrix}
    0 & \delta_2 && \\
    \delta_2 & \ddots & \ddots & \\
    & \ddots & \ddots & \delta_{N-1} \\
    && \delta_{N-1} & 0
    \end{bmatrix}, \\
            \mb D
        &:= 2\diag(\delta_1+\delta_2,\dotsc,\delta_{N-1} + \delta_N).
    \end{align*}
    Therefore,
    \begin{align*}
        \mb T^{-1}
        &= {(\mb B + \mb D)}^{-1} \\
        &= \mb D^{-1/2} {(I_{N-1} + \mb D^{-1/2} \mb B \mb D^{-1/2})}^{-1} \mb D^{-1/2} \\
        &= \sum_{k=0}^\infty {(-1)}^k \mb D^{-1/2} {(\underbrace{\mb D^{-1/2} \mb B \mb D^{-1/2}}_{:= \mb M})}^k \mb D^{-1/2}.
    \end{align*}
    The matrix $\mb M$ is
    \begin{align*}
        \mb M
        &= \begin{bmatrix}
    0 & \gamma_2 && \\
    \gamma_2 & \ddots & \ddots & \\
    & \ddots & \ddots & \gamma_{N-1} \\
    && \gamma_{N-1} & 0
    \end{bmatrix},
    \end{align*}
    where we set
    \begin{align*}
        \gamma_i := \frac{\delta_i}{2\sqrt{(\delta_{i-1} + \delta_i)(\delta_i + \delta_{i+1})}}
        \le \frac{1}{2(1+\alpha)}.
    \end{align*}
    Since $\mb M$ has non-negative entries, we have the entrywise bound
    \begin{align*}
        \mb M^k
        &\le \frac{1}{{\{2(1+\alpha)\}}^k} {\underbrace{\begin{bmatrix}
    0 & 1 && \\
    1 & \ddots & \ddots & \\
    & \ddots & \ddots & 1 \\
    && 1 & 0
    \end{bmatrix}}_{:=\mb A}}^k.
    \end{align*}
    The matrix $\mb A$ is the adjacency matrix of the path graph on $\{1,\dotsc,N-1\}$, so ${(\mb A^k)}_{i,j}$ is the number of paths from $i$ to $j$ of length $k$.
    We can trivially bound this number by $2^k \one_{\abs{i-j} \le k}$. From this we deduce the entrywise bound
    \begin{align*}
        {(\mb M^k)}_{i,j}
        &\le \frac{1}{{(1+\alpha)}^k} \one_{\abs{i-j} \le k}.
    \end{align*}
    Therefore,
    \begin{align*}
        \abs{{(\mb T^{-1})}_{i,j}}
        &\le \sum_{k=0}^\infty \frac{1}{2\sqrt{(\delta_i + \delta_{i+1})(\delta_j + \delta_{j+1})}} \, \frac{\one_{\abs{i-j} \le k}}{{(1+\alpha)}^k} \\
        &\le \frac{1}{4\alpha \delta} \sum_{k=\abs{i-j}}^\infty \frac{1}{{(1+\alpha)}^k} = \frac{1}{4\alpha^2 \,{(1+\alpha)}^{\abs{i-j}-1}} \, \frac{1}{\delta}. \placeqedmm
    \end{align*}
\end{proof}

\section{\allcaps{Details for the Experiments}}

In this section we provide further details for the experiments in the paper, except for the thin-plate spline example (which is discussed in Appendix~\ref{appendix:thin_plate}).

\subsection{Figure~\ref{fig:sawtooth}}\label{appendix:sawtooth}
In this figure, we set five Gaussians as our interpolation knots, alternating between
\begin{align*}
    \Normal\Bigl(\begin{bmatrix} 7(k-1) \\ 0 \end{bmatrix}, \begin{bmatrix}4 & 0\\ 0 & 2\end{bmatrix}\Bigr) \qquad\text{for}~k~\text{odd}
\end{align*}
and
\begin{align*}
    \Normal\Bigl(\begin{bmatrix} 7(k-1) \\ 7 \end{bmatrix}, \begin{bmatrix} 2 & 0\\ 0 & 4\end{bmatrix}\Bigr) \qquad\text{for}~k~\text{even},
\end{align*}
where $k = 1, \dotsc, 5$.

To determine the linear and cubic spline interpolations we first computed the optimal transport maps between the neighboring Gaussians. The closed-form formula for the Monge map from $\Normal(\mu_1, \Sigma_1)$ to $\Normal(\mu_2, \Sigma_2)$ is
\begin{align*}
    T(x) = \mu_2 + A(x - \mu_1), \quad A = \Sigma_1^{-\frac{1}{2}}(\Sigma_1^\frac{1}{2}\Sigma_2\Sigma_1^\frac{1}{2})^\frac{1}{2}\Sigma_1^{-\frac{1}{2}}.
\end{align*}
The gray lines in both figures show the trajectories of individual sample points along our interpolations. To draw them, we obtained a sample $X_0$ from the Gaussian at time $t = 0$, repeatedly applied the Monge maps between successive Gaussians in time, and fit a piecewise linear or natural cubic spline through these points as described in Section~\ref{scn:transport_spline_alg}.

Since the maps between successive Gaussians are linear and the formula for the linear or natural cubic spline is linear in its knots, the value of the spline $S_t(X_0)$ interpolation at time $t$ is linear in $X_0$. Hence, given the covariance of the Gaussian at time $t = 0$, we used this linear map $S_t$ to compute the covariance of the interpolated Gaussian at time $t$. Likewise, by taking a linear or cubic spline through the means of the Gaussians at the knot points, we obtained the means of the interpolated Gaussians at any given time. Using this information, we plotted the interpolated Gaussians at the halfway points between the knots for both the linear and cubic spline interpolations.

\subsection{Figure~\ref{fig:nbody}}\label{appendix:nbody}
To simulate the $n$-body trajectories, we used the Python \texttt{nBody} simulator by Cabrera \& Li, which can be accessed at \href{https://github.com/GabrielSCabrera/nBody}{https://github.com/GabrielSCabrera/nBody}.

We created 15 smaller bodies, each of mass $5\times 10^9$ and radius $1$.
Each body was initialized with a position $x$ and a velocity $v$ drawn randomly according to
\begin{align*}
    x \sim \Normal\Bigl(\begin{bmatrix} 100 \\ 100 \end{bmatrix}, \begin{bmatrix} 30 & 0 \\ 0 & 20 \end{bmatrix}\Bigr), \qquad
    v \sim \Normal\Bigl(\begin{bmatrix} 10 \\ -20 \end{bmatrix}, \begin{bmatrix} 20 & 0 \\ 0 & 10 \end{bmatrix}\Bigr).
\end{align*}
In addition, we also created one larger body, with mass $10^{11}$ and radius $10$, initialized at the origin with no initial velocity.

We simulated the trajectories of the planets for 5 seconds sampled every 0.02 seconds. We took the positions of the bodies at 5 evenly spaced times as the knots for our interpolation. In order to solve the matching problem between planets at neighboring knot times, we placed a uniform empirical distribution over the planets at both times and used the Python Optimal Transport (POT) library function \texttt{ot.emd} to compute the Monge map between these two distributions. We checked post process that the Monge maps computed were indeed valid matchings (i.e.\ permutation matrices).

Given the Monge maps between knots, we applied Algorithm~\ref{alg:interpolate} to interpolate the empirical distributions of the bodies using cubic splines. Note that in our cubic spline reconstruction, it is possible to observe mistakes in the matching, i.e., the Monge map may not necessarily map a body at one time to the same body at a future time. Such mismatches seem unavoidable without using a more sophisticated method which takes into account the physical model in the simulation.

\section{\allcaps{Further Details for Thin-Plate Splines}}\label{appendix:thin_plate}

\subsection{Simultaneously Optimal Coupling}\label{appendix:1d_coupling}

In Section~\ref{scn:thin_plate} we introduce the following coupling.
Let $U$ be a uniform random variable on $[0,1]$, and set
\begin{align*}
    \Y_{x_i}
    &= F^{-1}_{\mus_{x_i}}(U), \qquad i=0,1,\dotsc,N.
\end{align*}
Then, $(\Y_{x_0},\Y_{x_1},\dotsc,\Y_{x_N})$ is a simultaneously optimal coupling of the measures $\mus_{x_0}, \mus_{x_1},\dotsc,\mus_{x_N}$. This follows directly from~\textcite[\S 2.1-2.2]{santambrogio2015ot}, but we provide some additional explanation here.

As described in Section~\ref{scn:ot}, the Monge map $T_{i,j}$ from $\mus_{x_i}$ to $\mus_{x_j}$ is characterized as the ($\mus_{x_i}$-a.e.) unique mapping which both pushes $\mus_{x_i}$ forward to $\mus_{x_j}$ and is the gradient of a convex function.
In one dimension, the latter condition simply means that $T_{i,j}$ is an increasing function.
It is easily checked that $F_{\mus_{x_j}}^{-1} \circ F_{\mus_{x_i}}$ satisfies these properties, and thus\footnote{The inverse CDFs described here exist because of our assumption of absolute continuity of the measures.}
\begin{align*}
    T_{i,j} = F_{\mus_{x_j}}^{-1} \circ F_{\mus_{x_i}}.
\end{align*}

Now, observe that a composition of increasing maps is increasing, which implies that $T_{j,k} \circ T_{i,j}$ must be the Monge map $T_{i,k}$. This key fact directly implies the existence of the simultaneously optimal coupling of the measures. In higher dimensions, this breaks down because the composition of Monge maps is no longer necessarily a Monge map (that is, the composition of gradients of functions is not necessarily the gradient of a function).

\subsection{Gaussian Splines and Quantiles}\label{appendix:quantile}

Recall that the \emph{$\alpha$-quantile} of a measure $\mu$ is the value $c_\alpha$ for which $\mu((-\infty, c_\alpha]) = \alpha$. If $\mu$ has CDF $F_\mu$, then the $\alpha$-quantile is simply $F_\mu^{-1}(\alpha)$.
If we denote by $\Phi$ the CDF of the standard Gaussian distribution, then the $\alpha$-quantile of $\Normal(0, 1)$ is $\Phi^{-1}(\alpha)$, and the $\alpha$-quantile of $\mc N(m, \sigma^2)$ is $m+\Phi^{-1}(\alpha) \sigma$.
 
Suppose the measures $\mus_{x_i}$, $i=0,1,\dotsc,N$, are all one-dimensional Gaussians, and write $\mus_{x_i} = \Normal(m_{x_i}, \sigma_{x_i}^2)$.
The next result immensely facilitates the computation of the quantiles of the thin-plate transport spline.

\begin{prop}
Consider:
\begin{itemize}
    \item ${(m_x)}_{x\in\R^2}$, the (Euclidean) thin-plate spline interpolating the means ${(m_{x_i})}_{i=0}^N$, and
    \item ${(s_x)}_{x\in\R^2}$, the (Euclidean) thin-plate spline interpolating the standard deviations ${(\sigma_{x_i})}_{i=0}^N$.
\end{itemize}

For any $\alpha \in [0,1]$, the $\alpha$-quantile of $\mu_x$, the interpolated thin-plate transport spline at $x$, is given by $m_x + \Phi^{-1}(\alpha) \, \abs{s_x}$.
\end{prop}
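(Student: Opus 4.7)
The strategy is to unwind the thin-plate transport spline construction for Gaussian data and exploit two key pieces of structure: (i) the inverse CDF of a univariate Gaussian is an affine function of the inverse CDF of the standard Gaussian, and (ii) Euclidean thin-plate splines are linear functionals of the interpolated values. Combined, these force the interpolated law $\mu_x$ to be a Gaussian whose parameters are exactly the thin-plate spline interpolations of the means and standard deviations.

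\textbf{Step 1: sampling representation.} Following Algorithm~\ref{alg:interpolate} adapted to thin-plate splines with the simultaneously optimal coupling of Appendix~\ref{appendix:1d_coupling}, I draw $U \sim \mathrm{Uniform}[0,1]$, set $\Y_{x_i} = F^{-1}_{\mus_{x_i}}(U)$ for $i = 0, \dotsc, N$, and define $\Y_x$ to be the Euclidean thin-plate spline through $\{(x_i, \Y_{x_i})\}_{i=0}^N$ evaluated at $x$. By definition $\mu_x = \mathrm{law}(\Y_x)$.

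\textbf{Step 2: affine structure from Gaussianity.} Since $\mus_{x_i} = \Normal(m_{x_i}, \sigma_{x_i}^2)$, we have $F^{-1}_{\mus_{x_i}}(u) = m_{x_i} + \sigma_{x_i}\,\Phi^{-1}(u)$. Setting $Z := \Phi^{-1}(U) \sim \Normal(0,1)$, this gives $\Y_{x_i} = m_{x_i} + \sigma_{x_i} Z$.

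\textbf{Step 3: linearity of thin-plate splines.} As noted after equation~\eqref{eq:tps_formula}, the coefficients of the thin-plate spline are obtained by solving the linear system $Lw = b$, so the map from data $(z_i)_{i=0}^N$ to the spline value at any fixed $x$ is a linear functional $S_x$ of the $z_i$. Applying $S_x$ to the decomposition in Step 2,
\begin{equation*}
    \Y_x \;=\; S_x(\Y_{x_0}, \dotsc, \Y_{x_N}) \;=\; S_x(m_{x_0}, \dotsc, m_{x_N}) \;+\; Z\cdot S_x(\sigma_{x_0}, \dotsc, \sigma_{x_N}) \;=\; m_x + s_x Z,
\end{equation*}
by the very definitions of $m_x$ and $s_x$ given in the statement.

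\textbf{Step 4: identification of $\mu_x$ and quantile formula.} Because $Z \sim \Normal(0,1)$ is symmetric, $s_x Z \stackrel{\mathrm d}{=} \abs{s_x} Z$, so $\mu_x = \Normal(m_x, s_x^2)$ regardless of the sign of $s_x$. (The absolute value is needed because thin-plate splines do not preserve positivity; this is the one subtle point, but it is resolved immediately by the symmetry of $Z$.) The $\alpha$-quantile of $\Normal(m_x, s_x^2)$ is $m_x + \Phi^{-1}(\alpha)\abs{s_x}$, which is the claimed formula. There is no serious obstacle; the proof is essentially a one-line consequence of linearity once the Gaussian reparametrization in Step 2 is in place.
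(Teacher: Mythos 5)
Your proof is correct and takes essentially the same approach as the paper: both reduce $\Y_{x_i}$ to an affine function of a single standard-Gaussian variable and then apply the linearity of the thin-plate spline operator $S_x$ to conclude $\Y_x = m_x + s_x Z$. The only cosmetic difference is that the paper parametrizes via the Monge maps $T_{0,i}$ and uses $(\Y_{x_0} - m_{x_0})/\sigma_{x_0}$ as the underlying standard Gaussian, whereas you use the equivalent $U \sim \mathrm{Uniform}[0,1]$ representation from Appendix~\ref{appendix:1d_coupling} and set $Z = \Phi^{-1}(U)$; the paper itself declares these two sampling procedures equivalent. Your Step 4, explicitly justifying the appearance of $\abs{s_x}$ via the symmetry $s_x Z \stackrel{\mathrm d}{=} \abs{s_x} Z$, is a small but worthwhile clarification that the paper leaves implicit.
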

\begin{proof}
    It is standard that there is a linear mapping $S_x$ such that the Euclidean thin-plate spline interpolating through ${(x_i, z_i)}_{i=0}^N$ is given by $S_x(z_0, z_1,\dotsc,z_N)$.
    
    It follows from~\eqref{eq:gaussian_ot_map} and the discussion in Appendix~\ref{appendix:1d_coupling} that the Monge map from $\mus_{x_0}$ to $\mus_{x_i}$ is the increasing map $z \mapsto (\sigma_{x_i}/\sigma_{x_0}) (z - m_{x_0}) + m_{x_i}$.
    Thus,
    \begin{align*}
        \Y_x
        &= S_x(\Y_{x_0}, \Y_{x_1}, \dotsc,\Y_{x_N}) \\
        &= S_x\bigl(\Y_{x_0}, \frac{\sigma_{x_1}}{\sigma_{x_0}} (\Y_{x_0} - m_{x_0}) + m_{x_1}, \dotsc,  \frac{\sigma_{x_N}}{\sigma_{x_0}} (\Y_{x_0} - m_{x_0}) + m_{x_N}\bigr) \\
        &= S_x(m_{x_0}, m_{x_1}, \dotsc, m_{x_N}) + S_x\bigl( \Y_{x_0} - m_{x_0}, \frac{\sigma_{x_1}}{\sigma_{x_0}} (\Y_{x_0} - m_{x_0}), \dotsc, \frac{\sigma_{x_N}}{\sigma_{x_0}} (\Y_{x_0} - m_{x_0})\bigr) \\
        &= m_x + \frac{\Y_{x_0} - m_{x_0}}{\sigma_{x_0}} \, S_x(\sigma_{x_0}, \sigma_{x_1},\dotsc,\sigma_{x_N}) \\
        &= m_x + s_x \, \frac{\Y_{x_0} - m_{x_0}}{\sigma_{x_0}}
        \sim \mc N(m_x, s_x^2)
        = \mu_x.
    \end{align*}
    This is the desired result. \placeqed
\end{proof}

\subsection{Figure~\ref{fig:caltemp}}\label{appendix:caltemp}

Here we give more details on the thin-plate spline interpolation leading to Figure~\ref{fig:caltemp}. The data is a representation of the temperature at various weather stations throughout California on June 1 of each year in a thirty year period. That is, we consider the {\it distribution} of temperatures recorded on each of June 1, 1981, June 1, 1982, \ldots, June 1, 2010, and we model this distribution as Gaussian  (characterized by its mean and standard deviation). This data is processed and released each decade by the NOAA NCEI \parencite{NOAA_normals_2010}. We interpolate these measures using our transport spline technique, obtaining Gaussian measures at each point in California. The left side of Figure~\ref{fig:caltemp} summarizes these measures by their quantiles, while the right side illustrates the behavior of our method as we sample increasingly many weather stations. The median temperature in the top left quantile plot is taken to be equal to the mean temperature due to our assumption that the temperature distribution is Gaussian at every location. Though there are 484 stations in the NOAA dataset, we used substantially fewer to better capture the convergence of our method.

\printbibliography{}

\end{document}